\theoremstyle{plain}
\newtheorem{proposition}{Proposition}[section]
\newtheorem{corollary}[proposition]{Corollary}
\newtheorem{lemma}[proposition]{Lemma}
\newtheorem{theorem}[proposition]{Theorem}
\theoremstyle{definition}
\newtheorem{definition}[proposition]{Definition}
\newtheorem{example}[proposition]{Example}
\newtheorem*{example*}{Example}
\theoremstyle{remark}
\numberwithin{equation}{section}
\numberwithin{figure}{section}
\newcommand*\openquote{\makebox(25,-22){\scalebox{5}{\color{white}``}}}
\newcommand*\closequote{\makebox(25,-22){\scalebox{5}{\color{white}''}}}
\colorlet{shadecolor}{gray!20}
\newif\if@right
\def\shadequote{\@righttrue\shadequote@i}
\def\shadequote@i{\begin{snugshade}\begin{quote}\openquote}
\def\endshadequote{%
  \if@right\hfill\fi\closequote\end{quote}\end{snugshade}}
\DeclareMathOperator{\Hom}{Hom}
\DeclareMathOperator{\Mat}{Mat}
\DeclareMathOperator{\Ima}{Im}
\DeclareMathOperator{\diag}{\mathrm{diag}}
\DeclareMathOperator{\Tr}{Tr}
\DeclareMathOperator*{\Alt}{Alt}
\DeclareMathOperator{\into}{\hookrightarrow}
\DeclarePairedDelimiter\abs{\lvert}{\rvert}
\DeclareMathOperator{\Gal}{Gal}
\DeclareMathOperator{\Out}{Out}
\DeclareMathOperator{\GL}{GL}
\DeclareMathOperator{\SL}{SL}
\DeclareMathOperator{\Sp}{Sp}
\DeclareMathOperator{\<}{\vartriangleleft}
\DeclareMathOperator{\ad}{ad}
\DeclareMathOperator{\mfgl}{\mathfrak{gl}}
\DeclareMathOperator{\mfg}{\mathfrak{g}}
\DeclareMathOperator{\mfn}{\mathfrak{n}}
\DeclareMathOperator{\mfh}{\mathfrak{h}}
\DeclareMathOperator{\mfsp}{\mathfrak{sp}}
\DeclareMathOperator{\htt}{ht}
\newcommand{\KMS}[1]{\mathscr{G}_{#1}}
\DeclareMathOperator{\HB22}{HB_2^{(2)}}
\DeclareMathOperator{\CC}{\mathcal{CC}} 
\DeclareMathOperator{\re}{re}
\DeclareMathOperator{\im}{im}
\DeclareMathOperator{\ma}{ma}
\DeclareMathOperator{\mfG}{\mathfrak{G}}
\DeclareMathOperator{\mfU}{\mathfrak{U}}
\DeclareMathOperator{\K}{\mathbb{K}}
\DeclareMathOperator{\Z}{\mathbb{Z}}
\DeclareMathOperator{\C}{\mathbb{C}}
\DeclareMathOperator{\N}{\mathbb{N}}
\DeclareMathOperator{\F}{\mathbb{F}}
\title[Lie type quotients of $\mathfrak{U}^+_{\mathrm{HB}_2^{(2)}}(\K)$]{Lie type quotients of the maximal unipotent subgroup of Kac-Moody groups of type $\mathrm{HB}_{2}^{(2)}$}
\author{Robynn Corveleyn}
\date{\today}
\keywords{Kac-Moody groups, Kac-Moody-Steinberg groups, finite simple quotients, high-dimensional expanders}
\subjclass{20G44, 20E26, 20D06, 05C48}
\thanks{The author is grateful for financial support from the FWO and the F.R.S.–FNRS under the Excellence of Science (EOS) program (project ID 40007542).}
\address{Robynn Corveleyn ({\tt robynn.corveleyn@uclouvain.be}) \newline Institut de Recherche en Math\'{e}matique et Physique, UCLouvain, Chemin du Cyclotron 2, 1348 Louvain-la-Neuve, Belgium.}
\begin{document}
\maketitle

\begin{abstract}
In this article, we construct infinite families $(G_n)_{n \in \N}$ of finite simple groups $G_n$ of Lie type, such that the rank of $G_n$ strictly increases as $n$ tends to infinity, and such that each $G_n$ is a quotient of the maximal unipotent subgroup $U^+$ of the (minimal) Kac-Moody group $\mathfrak{G}_{A}(\mathbb{K})$ of type $\mathrm{HB}_{2}^{(2)}$ over a finite field $\mathbb{K}$. Moreover, we show that the quotient maps lead to the construction of an infinite family of bounded degree spectral high-dimensional expanders. These provide the first class of examples of infinite families of high-dimensional expanders constructed from Lie type groups of unbounded rank.
\end{abstract}
\epigraph{Ik wil een Singer \newline wij willen een Singer \newline wij eisen een Singer \newline wat wij willen is ons recht}{— Paul Van Ostaijen \par \textit{Huldegedicht aan Singer}, 1921}
\section{Introduction}
\begingroup
\renewcommand\theproposition{\Alph{proposition}}
In this article, our aim is to construct arbitrarily large finite simple quotients of Lie type of the maximal unipotent subgroup $U^+ := \mfU^+_{{\tiny \HB22}}(\F_q)$ of a Kac-Moody group of type  $\HB22$ over a finite field $\F_q$. We then show that these lead to the construction of infinite families of bounded degree spectral high-dimensional expanders. 

Spectral higher dimensional expansion is a generalisation to simplicial complexes (of dimension at least 2) of the notion of spectral expansion in the theory of graphs. High-dimensional expanders satisfy strong local-to-global results, making them powerful and useful tools for many applications. For example, they have been used to unify the study of local testability of codes and that of quantum LDPC codes, as well as in questions about topological overlapping properties, see \cite{GotlibKaufman}. Not many explicit constructions of high-dimensional expanders are currently known. We contribute to this study by giving, to the best of our knowledge, the first construction of infinite families of high-dimensional expanders arising from coset complexes of Lie type groups of unbounded rank. In \cite{IngaPaper}, the authors show that finite quotients of \emph{Kac-Moody-Steinberg-groups} (or KMS-groups for short) can give rise to high-dimensional expanders. We show that the quotients of $U^+$ we construct in this paper can be used to apply their result.

\par Let $A$ be a generalised Cartan matrix (GCM) $(A_{ij})_{i,j \in I}$ in the sense of \cite[Chapter 1]{Kac}. A subset $J \subseteq I$ is called \emph{spherical} if the matrix $A_J := (A_{ij})_{i,j \in J}$ is a Cartan matrix. For $n \leq \abs{I}$, we say that $A$ is \emph{$n$-spherical} if every subset $J \subseteq I$ of size at most $n$ is spherical. For any $2$-spherical GCM and any field $\K$, one defines the KMS-group $\KMS{A}(\K)$ as an amalgamated product of the unipotent radicals $\mfU^+_{A_J}(\K)$ of the standard Borel subgroups of the rank $2$ Chevalley groups $\mfG_{A_J}(\K)$ of type $A_J$ over $\K$, for all $J\subseteq I$ such that $\abs{J} \leq 2$ (the precise definition is given in \cref{sect KMS groups}).

In this paper, we consider a GCM $A$ of type $\HB22$, as given in \Cref{HB22}. When $\K = \F_p$ with $p$ a prime, the KMS-group of type $A$ is given by the following presentation:
\begin{align}
\KMS{\HB22}(\F_p) \cong \langle a, b , c \mid &  \: \:  a^p, b^p, c^p, [a,b,a], [a,b,b] , [c,b,c], [c,b,b,b],[c,b,b,c], \\ & \: \: [c,a,c], [c,a,a,a], [c,a,a,c] \rangle. \notag
\end{align}
\begin{figure}[h]
\centering
\vspace{-24pt}
\caption{The GCM of type $\HB22$ and its corresponding Dynkin diagram.}
\label{HB22}
\begin{minipage}{.3\textwidth}
$A = \left(\begin{matrix}
2 & -1 & -2\\
-1 &  2 & -2 \\
-1 & -1 & 2
\end{matrix}\right)$
\end{minipage}
\hspace{1cm}
\begin{minipage}{.2\textwidth}
\begin{tikzpicture}[roundnode/.style={draw,shape=circle,fill,inner sep=2pt},decoration={markings,mark=at position 0.6 with {\arrow[]{>}}}]
\node[roundnode,label={south:$a $}] (b) {};
\node[roundnode,label={north:$c $}] (a) [above right = of b,xshift = -9pt] {}; 
\node[roundnode,label={south:$b $}] (c) [below right = of a,xshift = -9pt] {};
\draw[double, double distance = 2pt,postaction={decorate}] (a) -- (b);
\draw[-] (b) -- (c);
\draw[double, double distance = 2pt,postaction={decorate}] (a) -- (c);
\end{tikzpicture}
\end{minipage}
\end{figure}

This group was first studied in \cite{CCKW}. The authors show that it is a hyperbolic group (for $p$ odd) with property (T) (for $p \geq 7$) (\cite[Theorem 1.3]{CCKW}). Moreover, they explore its finite simple quotients, and show in particular that (for $p$ odd) the KMS-group $\KMS{\HB22}(\F_p)$ has a finite simple quotient containing an isomorphic copy of $\Alt(n)$, for every $n \geq 1$ (\cite[Corollary 7.19]{CCKW}).

We extend their work in several ways. For any field $\K$ with $\abs{\K} \geq 4$, there is a canoncial surjective group homomorphism \[
\Theta_{\K, \HB22} \colon \KMS{\HB22}(\K) \to \mfU^+_{\HB22}(\K).
\]
Inspired by the quotients of $\KMS{\HB22}(\F_p)$ described in \cite[Proposition 7.16]{CCKW}, we construct group homomorphisms
\[\Phi_{M_a,M_b,M_c} \colon \mfU^+_{\HB22}(\K) \to \GL_{4n}(\K),\]
for any field $\K$ and any integer $n \geq 1$, parametrised by matrices $M_a,M_b,M_c \in \Mat_{n \times n}(\K)$. We then determine the isomorphism type of these quotients explicitly, for suitable choices of matrices $M_a,M_b,M_c \in \Mat_{n \times n}(\K)$.

For $A$ of type $\HB22$ with root system $\Delta$, let $\mfg_A(\C)$ be the (derived) Kac-Moody algebra over $\C$ (see \cite[Chapter 1]{Kac}). By definition, $\mfg_A(\C)$ has a triangular decomposition $\mfn^+ \oplus \mfh' \oplus \mfn^-$, and the subalgebra $\mfn^+$ is generated by elements $e_a, e_b$ and $e_c$ corresponding to the positive simple roots $a,b,c \in \Delta$. 
Let $M_a, M_b, M_c \in \Mat_{n \times n}(\C)$. Then the assignment 
\begin{align*}
\phi_{M_a,M_b,M_c}^{[n]} \colon & \mfn^+ \to \mfgl_{4n}(\C) \\ 
& \: e_a \: \mapsto  M_a(E_{14}+E_{23}), \quad 
e_b \:  \mapsto M_b(E_{21}-E_{34}), \quad 
e_c\:  \mapsto  M_c E_{42},
\end{align*}
is a Lie algebra homomorphism (see \Cref{is Lie alg homomorphism} below). When $n = 1$ and $M_i \neq 0$ for all $i \in \{a,b,c\}$, it is a surjective homomorphism 
\[\phi_{M_a,M_b,M_c}^{[1]} \colon \mfn^+ \to \mfsp_4(\C),\]
since for $\Psi = \pm \{\alpha_1 , \alpha_2, \alpha_1+\alpha_2, 2 \alpha_1 +\alpha_2\}$ a root system of type $B_2$, the root spaces of $\mfsp_4(\C)$ corresponding to $-\alpha_1$, $-\alpha_2$ and $\alpha_1 + \alpha_2$ generate $\mfsp_4(\C)$ and are respectively spanned by $E_{21}-E_{34}$, $E_{42}$ and $E_{14}+E_{23}$.

We show in \Cref{prop:alg morphism lifts to group morphism} that for any $n \geq 1$ and any $M_a, M_b, M_c \in \Mat_{n \times n}(\C)$, the Lie algebra homomorphism $\phi_{M_a,M_b,M_c}^{[n]}$ induces an “exponentiated” group homomorphism \[\Phi_{M_a,M_b,M_c}\colon \mfU^+_A(\C) \to \GL_{4n}(\C).\] After passing to an arbitrary field, we obtain in particular for any field $\K$ and any matrices $M_a,M_b,$ and $ M_c  \in \Mat_{n \times n}(\K)$ an induced group homomorphism \[\Phi_{M_a,M_b ,M_c} \colon \mfU^+_A(\K) \to \GL_{4n}(\K).\]

When $\abs{\K} \geq 4$, the group $\mfU^+_{A}(\K)$ is generated by three copies of $(\K,+)\cong \mfU_i(\K)\leqslant \mfU^+_{A}(\K)$, parametrised by the isomorphisms $\K \to \mfU_i(\K), \lambda \mapsto x_i(\lambda)$, for $i \in \{a,b,c\}$ (see \Cref{U+ generated by simple root subgroups} below). Our first result can then be formulated more precisely as follows.
\begin{theorem}
\label{group hom to SL4K}
Let $\K$ be a field with $\abs{\K} \geq 4$ and let $n \geq 1$. Suppose  $M_a, M_b, M_c \in \Mat_{n \times n}(\K)$. 
For all $\lambda \in \K$,  set
\begin{equation*}
V_a(\lambda)  := \left( \begin{matrix}
1 & 0 & 0 & \lambda M_a \\ 0 & 1 & \lambda M_a & 0 \\ 0 & 0 & 1 & 0 \\ 0 & 0 & 0 & 1
\end{matrix}\right) ,
V_b(\lambda) := \left( \begin{matrix}
1 & 0 & 0 & 0 \\ \lambda M_b & 1 & 0 & 0 \\ 0 & 0 & 1 & -\lambda M_b \\ 0 & 0 & 0 & 1
\end{matrix}\right),  
V_c(\lambda)  := \left( \begin{matrix}
1 & 0 & 0 & 0 \\ 0 & 1 & 0 & 0 \\ 0 & 0 & 1 & 0 \\ 0 & \lambda M_c & 0 & 1
\end{matrix} \right).
\end{equation*}
Then the assignment \[ x_a(\lambda) \mapsto V_a(\lambda), \quad x_b(\lambda) \mapsto V_b(\lambda), \quad x_c(\lambda) \mapsto V_c(\lambda), \] 
induces a group homomorphism \[\Phi_{M_a,M_b,M_c} \colon \mfU^+_{\HB22}(\K) \to \SL_{4n}(\K).\]
Moreover, if $M_a, M_b$ and $M_c$ are invertible, then $\Phi_{M_a,M_b,M_c}$ is injective on the rank $2$ subgroups $\mfU^+_{A_J}(\K)$  ($\abs{J} = 2$) of $\mfU^+_{\HB22}(\K)$.
\end{theorem}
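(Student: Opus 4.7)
The strategy is to obtain $\Phi_{M_a,M_b,M_c}$ as the group-level exponentiation of the Lie algebra homomorphism $\phi^{[n]}_{M_a,M_b,M_c}$ (\Cref{is Lie alg homomorphism}) via \Cref{prop:alg morphism lifts to group morphism}, to verify that the image is $\SL_{4n}$-valued, and to establish injectivity on each rank~$2$ subgroup by an explicit block-matrix computation.

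The first step is to notice that each image $X_i := \phi^{[n]}_{M_a,M_b,M_c}(e_i)$ is nilpotent of order two: a direct application of $E_{ij}E_{kl} = \delta_{jk}E_{il}$ gives $(E_{14}+E_{23})^2 = (E_{21}-E_{34})^2 = E_{42}^2 = 0$. Consequently $\exp(\lambda X_i) = I_{4n} + \lambda X_i$ is a polynomial identity valid over any field, coinciding with the matrix $V_i(\lambda)$ of the statement. Invoking \Cref{prop:alg morphism lifts to group morphism} over $\C$, the Lie algebra homomorphism exponentiates to a group homomorphism $\mfU^+_{\HB22}(\C) \to \GL_{4n}(\C)$; since the commutator identities between the matrix generators are polynomial in $\lambda$ and in the entries of $M_a, M_b, M_c$, the construction specialises to arbitrary $\K$. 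Under $|\K| \geq 4$, \Cref{U+ generated by simple root subgroups} guarantees that $\mfU^+_{\HB22}(\K)$ is generated by $\{x_i(\lambda) : i \in \{a,b,c\},\ \lambda \in \K\}$, so that the resulting homomorphism is uniquely determined by $x_i(\lambda) \mapsto V_i(\lambda)$ and coincides with $\Phi_{M_a,M_b,M_c}$. Its image lies in $\SL_{4n}(\K)$ because each generator $V_i(\lambda) = I_{4n}+\lambda X_i$ is unipotent (with $X_i^2 = 0$), hence has determinant~$1$.

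For the injectivity claim, fix $J = \{i,j\} \subseteq \{a,b,c\}$ with $|J| = 2$ and assume $M_a, M_b, M_c$ are invertible. The group $\mfU^+_{A_J}(\K)$ is finite-dimensional unipotent with a unique normal form $\prod_{\gamma \in \Delta_J^+} x_\gamma(t_\gamma)$ as an ordered product indexed by the positive roots of $A_J$ (three in the type $A_2$ case $J = \{a,b\}$, four in each of the type $B_2$ cases $J \in \{\{a,c\},\{b,c\}\}$). By the Chevalley--Steinberg commutator relations, each $x_\gamma(t_\gamma)$ with $\gamma$ non-simple is realised by a specific iterated commutator in $x_i, x_j$; applying $\Phi$ yields the corresponding iterated commutator in $V_i, V_j$, which expands---using $X_a^2 = X_b^2 = X_c^2 = 0$ and the disjoint block supports of the $X_\ell$---into $I_{4n} + t_\gamma N_\gamma$, where $N_\gamma$ is supported in a single off-diagonal block position of the $4 \times 4$ block decomposition depending only on $\gamma$. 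Explicitly, the simple roots occupy blocks $(1,4),(2,3)$ (for $a$), $(2,1),(3,4)$ (for $b$), and $(4,2)$ (for $c$); the higher roots $a+b,\ a+c,\ 2a+c,\ b+c,\ 2b+c$ populate further distinct blocks such as $(2,4),(1,2),(4,3),(1,3),(3,1)$. The corresponding block entries are specific words in $M_a, M_b, M_c$ (for instance $\pm M_aM_c$ in block $(1,2)$ for the root $a+c$), and under the standing invertibility hypothesis one verifies that each such entry is invertible. Reading off the block entries of $\Phi(\prod_\gamma x_\gamma(t_\gamma))$ in the order prescribed by the normal form then produces a triangular system in the parameters $t_\gamma$ with invertible block pivots, forcing $t_\gamma = 0$ for all $\gamma$ whenever the image is $I_{4n}$. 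Hence $\Phi|_{\mfU^+_{A_J}(\K)}$ is injective.

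The main obstacle is the case analysis for the $B_2$-type rank~$2$ subgroups, whose Chevalley commutator relation $[x_\alpha(s),x_\beta(t)] = x_{\alpha+\beta}(\ast)\, x_{2\alpha+\beta}(\ast)$ produces two higher-root contributions simultaneously; disentangling these block-by-block and verifying that the resulting words in $M_a, M_b, M_c$ are indeed invertible is the most delicate part of the argument, though it reduces to routine matrix-unit algebra.
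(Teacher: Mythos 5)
Your strategy for the first part (exponentiate the nilpotent Lie algebra images and invoke \Cref{prop:alg morphism lifts to group morphism}, using \Cref{U+ generated by simple root subgroups} for generation and unipotence for the determinant) is the same as the paper's, and your strategy for the second part (normal form, Chevalley commutator relations, read off invertible block pivots) is also essentially what the paper does in \Cref{local injectivity property}. However, the first part of your proposal has a genuine gap in the base-change step. You apply \Cref{prop:alg morphism lifts to group morphism} to $\phi^{[n]}_{M_a,M_b,M_c}$ ``over $\C$'' and then assert that ``the construction specialises to arbitrary $\K$'' because the commutator identities are polynomial. But if $\K$ has positive characteristic, $\phi^{[n]}_{M_a,M_b,M_c}$ with $M_i \in \Mat_{n\times n}(\K)$ is not even a map into $\mfgl_{4n}(\C)$, so the hypothesis of \Cref{prop:alg morphism lifts to group morphism} (a $\C$-algebra target $B$ with a $\Z$-form $B_\Z$ such that $\Phi(\U_\Z^+)\subseteq B_\Z$) is not satisfied; and $\mfU^+_{\HB22}(\K)$ is not a base change of $\mfU^+_{\HB22}(\C)$, so one cannot simply ``specialise.'' The paper resolves this precisely by applying \Cref{prop:alg morphism lifts to group morphism} to the \emph{universal} morphism $\varphi_{\HB22}$ of \Cref{is Lie alg homomorphism}, with $B = \Mat_{4\times 4}(\C\langle X_a,X_b,X_c\rangle)$ and $\Z$-form $B_\Z = \Mat_{4\times 4}(\Z\langle X_a,X_b,X_c\rangle)$, obtaining a group homomorphism $\mfU^+_{\HB22}(\K) \to \GL_4(\K\langle X_a,X_b,X_c\rangle)$ \emph{before} any substitution, and only then evaluating $X_i \mapsto M_i$ to land in $\SL_{4n}(\K)$. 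You should replace your ``specialisation'' sentence by an explicit appeal to the free non-commutative polynomial ring; without this, existence of the homomorphism over $\K$ is not established (generation by the $U_i$'s gives uniqueness, not existence).

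A further minor inaccuracy: in the injectivity argument, the root-to-block assignment in the $B_2$ cases is not as you describe. For $J = \{a,c\}$, the root $a+c$ is read off simultaneously from blocks $(1,2)$ and $(4,3)$, while $2a+c$ is read off from $(1,3)$; your list ``$(2,4),(1,2),(4,3),(1,3),(3,1)$'' matched against ``$a+b,a+c,2a+c,b+c,2b+c$'' in that order does not reflect this. This does not break the argument (the paper simply exhibits the full block matrix and reads off the parameters), but your ``triangular system'' framing should be stated with the correct block-entry formulas as in eqs. (\ref{local grp 1})--(\ref{local grp 3}); the fact that, e.g., block $(1,2)$ of $\Psi(U_{a,c})$ carries $(\lambda_1\lambda_2+\lambda_3)M_aM_c$ rather than simply $\lambda_3 M_aM_c$ is exactly the point at which one must confirm the system is still solvable uniquely.
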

For $\K = \F_p$, the composition of the above group homomorphism with the canonical surjective homomorphism $\Theta_{\K,\HB22}$ is precisely the map from \cite[Proposition 7.15]{CCKW}.

The bulk of our paper is devoted to computing the images of the group homomorphisms given by \Cref{group hom to SL4K} explicitly, for suitable choices of $M_a,M_b,M_c \in \Mat_{n \times n}(\K)$, with $\K = \F_{q}$ a finite field and $n \geq 1$. Note that, by construction, if $M_a, M_b , M_c \in \GL_1(\F_q) = (\F_q)^{\times}$, the image is precisely $\Sp_4(\F_q)$. We construct matrices $M_a, M_b, M_c \in \GL_{n}(\F_q)$ with $n > 1$, such that the image is isomorphic to $\Sp_{4n}(\F_q)$. Surprisingly, we are also able to give constructions of matrices $M_a,M_b, M_c \in \GL_{n}(\F_q)$ such that the image of $\Phi_{M_a,M_b,M_c}$ is the whole of $\SL_{4n}(\F_q)$. More precisely, our main theorem is the following.

\begin{theorem}
\label{main theorem}
Let $p > 2$ and $k > 3$ be distinct primes. Let $q = p^r > 3$ for some $r \geq 1$. Then there exist matrices $M_a, M_b,M_c \in \GL_k(\F_q)$ and symmetric matrices $M_a',M_b',M_c' \in \GL_k(\F_q)$ such that 
\[
\Phi_{M_a,M_b,M_c} \colon \mfU^+_{\HB22}(\F_q) \to \SL_{4k}(\F_q)
\]
and
\[
\Phi_{M_a',M_b',M_c'}\colon \mfU^+_{\HB22}(\F_q) \to \Sp_{4k}(\F_q)
\]
are surjective group homomorphisms. 
\end{theorem}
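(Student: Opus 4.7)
The image of $\Phi_{M_a,M_b,M_c}$ is generated by the matrices $V_a(\lambda), V_b(\mu), V_c(\nu)$, which, writing $\F_q^{4k}\cong \F_q^4\otimes \F_q^k$, take the Kronecker-product form $V_i(\lambda) = I_{4k}+\lambda\, X_i\otimes M_i$ with $X_a = E_{14}+E_{23}$, $X_b = E_{21}-E_{34}$, $X_c = E_{42}$. All three $X_i$ lie in $\mfsp_4(\F_q)$ with respect to the standard symplectic form $J_4$ on $\F_q^4$, so $X_i^T J_4 + J_4 X_i = 0$, and hence when $M_i$ is symmetric the matrix $V_i(\lambda)$ preserves $J_4\otimes I_k$ on $\F_q^{4k}$. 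Thus for symmetric $M_a', M_b', M_c'$ the image of $\Phi_{M_a',M_b',M_c'}$ is automatically contained in $\Sp_{4k}(\F_q)$; in the general case no such constraint applies, leaving room for $\Phi_{M_a,M_b,M_c}$ to coincide with all of $\SL_{4k}(\F_q)$.

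The next step is a systematic commutator computation. From $X_i^2 = 0$ and the multiplication table for the $X_i$ (for instance $X_aX_b = -E_{24}$, $X_aX_c = E_{12}$, $X_cX_a = E_{43}$, $X_bX_c = -E_{32}$, $X_cX_b = E_{41}$), a direct calculation yields
\[
[V_a(\lambda), V_b(\mu)] \;=\; I_{4k} - \lambda\mu\, E_{24}\otimes (M_aM_b + M_bM_a),
\]
and analogous but longer formulas for $[V_a(\lambda), V_c(\nu)]$ and $[V_b(\mu), V_c(\nu)]$, whose iterated commutators produce elements of the form $I_{4k}+E_{ij}\otimes M_iM_jM_i$ in block positions corresponding to the long roots $2a+c$ and $2b+c$ of the two rank-two $B_2$-subdiagrams of $\HB22$. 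Using the invertibility of $2\in\F_q$ (so $p > 2$) to subtract off quadratic-in-parameter terms, one isolates, in each off-block-diagonal position $(i,j)$ reached this way, block-elementary matrices $I_{4k}+E_{ij}\otimes P$ with $P$ ranging over the $\F_q$-span of certain non-commutative words in $M_a, M_b, M_c$.

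The crux of the proof is then to choose $M_a, M_b, M_c \in \GL_k(\F_q)$ so that these words span all of $\Mat_{k\times k}(\F_q)$. A natural strategy takes $M_a$ to be a companion matrix of a primitive polynomial of degree $k$ over $\F_q$, so that $\F_q[M_a]\cong\F_{q^k}$ is a maximal subfield of $\Mat_{k\times k}(\F_q)$ and $M_a$ acts irreducibly on $\F_q^k$; choosing $M_b \notin \F_q[M_a]$ (and $M_c$ similarly), the algebra $\mathcal{A}$ generated by $M_a, M_b, M_c$ still acts irreducibly. Since $k$ is prime, the only subfields of $\F_{q^k}$ are $\F_q$ and $\F_{q^k}$, so the Schur centralizer of $\mathcal{A}$ — a division algebra over $\F_q$ contained in $\F_{q^k}$ — is forced to be $\F_q$; by Jacobson density, $\mathcal{A} = \Mat_{k\times k}(\F_q)$. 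For the symmetric variant I impose additionally that $M_a'$ is symmetric with irreducible characteristic polynomial of degree $k$ (which exists for $q$ odd and $k$ prime with $k\neq p$) and exploit that the anti-commutators $MN+NM$ of symmetric matrices are themselves symmetric, so the block-elementaries produced remain in $\Sp_{4k}(\F_q)$ while still exhausting the space of symmetric $k\times k$ matrices.

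Once every off-block-diagonal block of the $4k\times 4k$ matrix contains arbitrary $k\times k$ matrices (respectively symmetric ones) in the image, the block-elementaries decompose into classical elementary matrices $I+\lambda E_{mn}$ filling every off-block-diagonal entry; the surjection onto $\SL_{4k}(\F_q)$ (respectively $\Sp_{4k}(\F_q)$) then follows from the classical generation of these groups by elementary matrices, respectively by symplectic transvections, both valid under the hypothesis $q > 3$. In my view the principal difficulty is this third step: exhibiting explicit matrices whose non-commutative word algebra genuinely fills $\Mat_{k\times k}(\F_q)$ (respectively its symmetric subspace) while remaining invertible and, in the symplectic case, symmetric, and tracking carefully which words actually arise from the commutator calculus so that they are sufficient. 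It is precisely at this point that all four hypotheses $p>2$, $k>3$ prime, $k\neq p$, and $q>3$ enter in an essential way.
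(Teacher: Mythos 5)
Your overall skeleton — the Kronecker decomposition $V_i(\lambda) = I_{4k} + \lambda\, X_i\otimes M_i$ with $X_a,X_b,X_c\in\mfsp_4$, the observation that symmetric $M_i$ force the image into $\Sp_{4k}$, and the reduction to filling the off-diagonal blocks with enough $k\times k$ matrices — matches the paper's framework, and the opening commutator $[V_a,V_b]$ is computed correctly. However, the argument contains a genuine gap at precisely the point you yourself flag as the ``principal difficulty.'' You conclude from the Jacobson density theorem that, for suitable $M_a,M_b,M_c$, the associative algebra $\mathcal{A}=\F_q[M_a,M_b,M_c]$ equals $\Mat_{k\times k}(\F_q)$, and then treat this as establishing that the block-elementary matrices obtainable from the commutator calculus exhaust $\Mat_{k\times k}(\F_q)$. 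These are different statements: $\mathcal{A}$ contains words of arbitrary length, whereas the iterated commutators in the image of $\Phi_{M_a,M_b,M_c}$ place in each off-diagonal block only a limited collection of low-degree expressions (things like $M_aM_b+M_bM_a$ and $M_aM_bM_cM_bM_a$), and there is no a priori reason their $\F_q$-linear span is all of $\Mat_{k\times k}(\F_q)$ even when the generated algebra is. The symplectic case is worse: the space of symmetric $k\times k$ matrices is not closed under multiplication, so it is not an associative algebra and Jacobson density cannot be invoked at all; a Jordan-algebra or quadratic-form argument would be required, and none is given.

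The paper avoids this obstruction entirely by a different mechanism. Rather than trying to span $\Mat_{k\times k}(\F_q)$ by words, it chooses $M_c$ so that $(M_aM_b+M_bM_a)M_c$ is a Singer element $S$, invokes Dickson's theorem (\Cref{work of Dickson}) to embed a copy of $\SL_2(\F_{q^k})$ inside the image, and then applies Li's classification of overgroups of $\SL_2(\F_{q^k})$ in $\SL_{2k}(\F_q)$ (\Cref{refinement Li}, \Cref{new refinement Li}) to recognize the generated subgroup as $\Sp_{2k}(\F_q)$ or $\SL_{2k}(\F_q)$. The distinction between those two cases is then settled by a symmetry analysis of a single explicit element $Z+Z^t$, which requires the delicate condition $\Tr(\lambda b_1^2)\neq 0$ from the self-dual normal basis of \Cref{lempelwein}, together with the constraint $M_aM_bM_a^{-1}M_b^{-1}=\diag(x,x^{-1},1,\dots,1)$ with $x\neq\pm1$ — hypotheses your companion-matrix construction does not secure. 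This Singer/Dickson/Li route is what actually uses the hypothesis that $k>3$ is prime (so there are no proper subfields of $\F_{q^k}$, and the outer automorphism order $2k$ is coprime to $p$), and it is entirely absent from your proposal. To repair your argument you would need to either carry out the spanning claim for the actual words the commutator calculus produces (and separately for their symmetric parts), or abandon the Jacobson density heuristic and pass through an embedded $\SL_2(\F_{q^k})$ as the paper does.
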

The proof of \Cref{main theorem}, as well as explicit constructions of the matrices $M_a,M_b,M_c \in \GL_k(\F_q)$ and $M_a',M_b',M_c' \in \GL_k(\F_q)$, can be found in \cref{sect: SL4K,sect: SP4k}.

Finally, we show in \cref{sect:HDX} that the quotients in \Cref{main theorem} satisfy the hypotheses from \cite[Theorem 4.3]{IngaPaper}, thus providing new infinite families of bounded degree, spectral high-dimensional expanders. 
\begin{corollary}
\label{col HDX}
Let $p > 2$ be a prime and $q = p^r  > 3 $ with $r \geq 1$. For all primes $k > 3$  such that $p \neq k$, there exist matrices $X_k, Y_k, Z_k \in \SL_{4k}(\F_q)$ such that, defining 
\[H_{T_k} := \langle  \{X_k,Y_k,Z_k\} \setminus \{T_k\} \rangle \leqslant \SL_{4k}(\F_q), \quad T_k \in \{X_k,Y_k,Z_k\}\]
the coset complex 
\[\mathcal{X}_{q,k} := \CC \left(\SL_{4k}(\F_q), \{H_{T_k}\}_{T_k \in \{X_k,Y_k,Z_k\}}\right)\]
is a $\frac{\sqrt{2q}+2}{q-2}$-spectral high-dimensional expander of dimension $2$. 

Similarly, for all primes $k > 3$  such that $p \neq k$ there exist matrices $X'_k,Y'_k,Z'_k \in \Sp_{4k}(\F_q)$ such that the coset complex
\[\mathcal{X}'_{q,k} := \CC\left(\Sp_{4k}(\F_q), \{H_{T_k}\}_{T_k \in \{X'_k,Y'_k,Z'_k\}}\right)\]
is a $\frac{\sqrt{2q}+2}{q-2}$-spectral high-dimensional expander of dimension $2$.
In partciular, $(\mathcal{X}_{q,k})_{k \in P}$ and $(\mathcal{X}'_{q,k})_{k \in P}$  are infinite families of bounded degree, spectral high-dimensional expanders, where $P := \{k \text{ prime } \mid p \neq k \text{ and } k > 3\}$.
\end{corollary}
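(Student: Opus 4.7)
The plan is to apply \cite[Theorem 4.3]{IngaPaper}, which provides sufficient conditions on a finite quotient of a KMS-group for the associated coset complex to be a bounded-degree spectral HDX with a quantitative spectral bound of the form appearing in the corollary. Thus the task reduces to producing surjections from $\KMS{\HB22}(\F_q)$ onto $\SL_{4k}(\F_q)$ and $\Sp_{4k}(\F_q)$ that satisfy the injectivity hypothesis on rank $2$ subgroups required by that theorem.

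The surjections are obtained by composing the homomorphisms $\Phi_{M_a,M_b,M_c}$ and $\Phi_{M'_a,M'_b,M'_c}$ produced by \Cref{main theorem} with the canonical projection $\Theta_{\F_q,\HB22} \colon \KMS{\HB22}(\F_q) \twoheadrightarrow \mfU^+_{\HB22}(\F_q)$. Since the matrices supplied by \Cref{main theorem} lie in $\GL_k(\F_q)$, the ``moreover'' clause of \Cref{group hom to SL4K} ensures that each $\Phi$ is injective on every rank $2$ subgroup $\mfU^+_{A_J}(\F_q)$ with $\abs{J}=2$; the same injectivity persists after composition with $\Theta_{\F_q,\HB22}$, because $\Theta_{\F_q,\HB22}$ restricts to an isomorphism on each such subgroup by the very definition of the KMS-group as an amalgam of these rank $2$ unipotent subgroups.

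I would then identify $X_k, Y_k, Z_k$ as the images under $\Phi_{M_a,M_b,M_c}$ of suitably chosen root-subgroup generators of $\mfU^+_{\HB22}(\F_q)$, picked so that the subgroups $H_{T_k} = \langle \{X_k, Y_k, Z_k\} \setminus \{T_k\}\rangle$ coincide with the images of the three rank $2$ unipotent subgroups of $\mfU^+_{\HB22}(\F_q)$, exactly as required by the coset complex construction in \cite[Theorem 4.3]{IngaPaper} (and analogously for the symplectic case with $X_k', Y_k', Z_k'$). That theorem then yields directly that both $\mathcal{X}_{q,k}$ and $\mathcal{X}'_{q,k}$ are $\frac{\sqrt{2q}+2}{q-2}$-spectral HDX of dimension $2$, the spectral bound originating from the gap estimate on the rank $2$ links which are isomorphic to coset complexes of rank $2$ Chevalley groups over $\F_q$.

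Finally, letting $k$ range over the infinite set $P$ produces the claimed infinite families, and bounded degree is automatic: each $H_{T_k}$ is isomorphic to a rank $2$ unipotent subgroup of a Chevalley group over $\F_q$, whose order depends only on $q$ and not on $k$. The main point requiring care is not any novel mathematical argument but the careful bookkeeping needed to match our surjections and their root-subgroup images with the precise formulation of \cite[Theorem 4.3]{IngaPaper}; once the dictionary is set up, the conclusion follows immediately from that theorem together with \Cref{main theorem}.
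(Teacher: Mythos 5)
Your proposal follows the same overall route as the paper: compose the surjections $\Phi_{M_a,M_b,M_c}$ and $\Phi_{M_a',M_b',M_c'}$ of \Cref{main theorem} with $\Theta_{\F_q,\HB22}$, verify the local conditions required by \cite[Theorem 4.3]{IngaPaper}, and take $X_k, Y_k, Z_k$ to be the images of generators of the simple root subgroups so that the $H_{T_k}$ are the images of the three rank $2$ local groups. Your observation that bounded degree is automatic because the local groups depend only on $q$ is also correct and in the spirit of the paper.

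However, there is one hypothesis of \cite[Theorem 4.3]{IngaPaper} that you do not address, and it is not a formal consequence of injectivity on the local groups: the \emph{intersection property} $\Psi(U_J \cap U_K) = \Psi(U_J)\cap\Psi(U_K)$ for all spherical $J,K$. The paper's \Cref{local injectivity property} establishes this separately, by writing out the images of $U_{a,b}$, $U_{a,c}$, $U_{b,c}$ explicitly as parametrised families of block matrices and reading off that their pairwise intersections are exactly the images of the corresponding simple root subgroups. Your proposal leans only on the ``moreover'' clause of \Cref{group hom to SL4K}, which gives injectivity on the rank $2$ subgroups but says nothing about how their images intersect inside $\SL_{4k}(\F_q)$ (resp.\ $\Sp_{4k}(\F_q)$). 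In principle two injectively embedded local groups could have a strictly larger intersection than the embedded image of $U_J\cap U_K$; one must rule this out, which is precisely what the explicit block-matrix parametrisations in \Cref{local injectivity property} do. So your argument has the correct skeleton, but you need to add a verification of the intersection property before \cite[Theorem 4.3]{IngaPaper} can be invoked.
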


This article is structured as follows. In \cref{sect:preliminaries}, we introduce the preliminary definitions and results we will need, and fix some notation. \Cref{sect:integration} is devoted to the proof of the first part of \Cref{group hom to SL4K}. Then in \cref{sect: SL4K,sect: SP4k}, we construct quotients of $\KMS{\HB22}(\F_q)$ respectively isomorphic to $\SL_{4k}(\F_q)$ and $\Sp_{4k}(\F_q)$, which together prove \Cref{main theorem}. Finally, in \cref{sect:HDX} we prove the second part of \Cref{group hom to SL4K}, as well as \Cref{col HDX}.

\subsection*{Acknowledgements}
I am grateful to Timothée Marquis for his guidance and the many enriching and fruitful discussions, and to Pierre-Emmanuel Caprace for his insights on his paper \cite{CCKW}. I also thank François Arnault for sharing the source code to \cite{ArnaultF} with me, which facilitated preliminary calculations with Singer elements.

\endgroup
\setcounter{proposition}{0}
\section{Preliminaries}
\label{sect:preliminaries}

\subsection*{Notation}
In this text, for any two elements $g, h$ in some group $G$, we define their commutator as $[g,h] := g^{-1}h^{-1}gh$. A nested commutator will be written as $[g_1, \ldots, g_n] := [[g_1, \ldots,g_{n-1}],g_n]$. 

We let $E_{i,j}$ denote the matrix with entries $1$ at position $(i,j)$ and $0$ elsewhere. The size of $E_{i,j}$ will usually be clear from the context; when ambiguity is possible, we will specify its size explicitly. For a $k \times k$-matrix $B$, the notation $B E_{i,j}$ with $i,j \in \{1,\ldots, n\}$ denotes the $n k \times nk$ block-matrix $(C_{s,t})_{1 \leq s,t \leq n}$, where $C_{s,t}$ is the $k \times k$ zero-matrix if $(s,t) \neq (i,j)$, and $C_{i,j} = B$.

For any unital ring $R$, we denote by $R^{\times}$ its subset of invertible elements. We denote by $\N$ the set of non-negative integers.
\subsection{Kac-Moody algebras}
Let $A = (A_{ij})_{i,j \in I}$ be a matrix with integer coefficients. It is called a \emph{generalised Cartan matrix} (abbreviated to GCM throughout the remainder of this paper) if  $A_{ii} = 2$, $A_{ij} \leq 0$ and $A_{ij} = 0 \iff A_{ji} = 0$, for all $i \neq j \in I$. A GCM is called \emph{spherical} (equivalently, of \emph{finite type}) if it is a Cartan matrix.

A GCM $A$ is said to be \emph{$n$-spherical} if $A_J := (A_{ij})_{i,j \in J}$ is spherical, for all $J \subseteq I$ with $\abs{J} \leq  n$. 
The size $\abs{I}$ of $I$ is called the \emph{rank} of $A$. \par 
To any GCM, there is an associated complex Lie algebra $\mfg_A(\C)$, with a presentation defined by generators $e_i, f_i, \alpha_i^{\vee}$, with relations as given by a \emph{Serre presentation} (see for example \cite[Definition 3.17]{boekTimothée}). It is called the (derived) \emph{Kac-Moody algebra of type $A$ over $\C$}. In case $A$ is of finite type, this is precisely the finite-dimensional complex semisimple Lie algebra of type $A$.

We denote by $\mfn^+_A(\C)$ (respectively, $\mfn^-_A(\C)$) the subalgebra generated by the elements $e_i$ (respectively $f_i$). 
It follows from the Serre presentation that $\mfn^+_A(\C)$ has the presentation
\begin{equation}
\label{Serre presentation}
\mfn_A^+(\C) = \left\langle e_i \mid \ad(e_i)^{-A_{ij}+1}e_j = 0, \quad i\neq j \in I \right\rangle_{\C}.
\end{equation}
The Kac-Moody algebra $\mfg_A(\C)$ has a triangular decomposition $\mfg_A(\C) := \mfn^+_A(\C) \oplus \mfh' \oplus \mfn^-_A(\C)$, where $\mfh'$ is the subalgebra spanned by the elements $\alpha_i^{\vee}$.

Let $Q := \bigoplus_{i \in I} {\Z}{\alpha_i}$ be the free abelian group generated by symbols $\alpha_i$. They are called the \emph{simple roots}. Moreover let $Q^{\pm} := \pm \bigoplus_{i \in I} {\N}{\alpha_i}$.  The Lie algebra $\mfg_A(\C)$ admits a $Q$-gradation
\[
\mfg_A(\C) = \bigoplus_{\alpha \in Q^+} \mfg_{\alpha} \oplus \mfh' \oplus \bigoplus_{\alpha \in Q^-} \mfg_{\alpha},\]
obtained by defining $\deg(e_i) := \alpha_i$ and $\deg(f_i) := -\alpha_i$. 

An element $\alpha \in Q \setminus \{0\}$ is called a \emph{root} if $\mfg_{\alpha} \neq \{0\}$. We write $\Delta$ for the set of roots, and define $\Delta_{\pm} := \Delta \cap Q^{\pm}$. Then $\Delta$ decomposes as $\Delta = \Delta_+ \sqcup \Delta_-$, respectively called the set of \emph{positive} and \emph{negative} roots. The \emph{height} of a root $\gamma = \pm \sum_{i \in I} n_i \alpha_i$ for $n_i \in \N$, is defined as \[\htt(\gamma) := \pm \sum_{i \in I} n_i.\]

We define a subgroup $W \leqslant \GL(Q)$, generated by the \emph{fundamental reflections}
\[
s_i \colon Q \to Q, \quad \alpha_j \mapsto \alpha_j - A_{ij}\alpha_i.
\]
Then $W$ is called the \emph{Weyl group} of $\Delta$. A root $\alpha$ in the $W$-orbit $W \cdot \{\alpha_1, \ldots, \alpha_n\}$ is called a \emph{real root}, while any $\delta \in \Delta \setminus W\cdot \{\alpha_1, \ldots, \alpha_n\}$ is called an \emph{imaginary root}. The corresponding subsets of $\Delta$ are respectively denoted by $\Delta^{\re}$ and $\Delta^{\im}$. We also set $\Delta^{\re}_{\pm} := \Delta^{\re} \cap \Delta_{\pm}$.

\subsection{A \texorpdfstring{$\Z$}{Z}-form of the universal enveloping algebra of \texorpdfstring{$\mfn^+$}{n+}}

For any complex associative algebra $B$, a \emph{$\Z$-form of $B$} is a subring $B_{\Z} \subseteq B$ such that $B_{\Z} \otimes \C \cong B$. For any GCM $A$, we denote by $\mathcal{U}^+_{\C}$ the universal enveloping algebra of $\mfn_A^+(\C)$. As shown in \cite[Proposition 7.4]{boekTimothée}, it has a $\Z$-form $\mathcal{U}_{\Z}^+$ given by its subring generated by the elements $e_i^{(s)}$, for all $i \in I$ and  $s \in \N$, where for any $u \in \mathcal{U}_{\C}^+$, 
\[
u^{(s)} := \frac{1}{s!} u^s.
\] For a field $\K$, we define $\mfn_A^+(\K) := (\mathcal{U}^+_{\Z} \cap \mfn_A^+(\C)) \otimes \K$ and $\mathcal{U}^+_{\K} := \mathcal{U}^+_{\Z} \otimes \K$.

The ring $\mathcal{U}_{\Z}^+$ is graded with respect to the height of the roots, i.e.\
$\mathcal{U}_{\Z}^+ = \bigoplus_{n \geq 0} \mathcal{U}_{n, \Z}^+$,
where each $\mathcal{U}_{n,\Z}^+$ is the submodule generated by the products $e_{i_1}^{(s_1)} \ldots e_{i_t}^{(s_t)}$, such that $s_1 + \ldots + s_t = n$. The \emph{completion} of $\mathcal{U}_{\Z}^+$ with respect to this gradation is given by 
\[
\widehat{\mathcal{U}}^+_{\Z} := \prod_{n \geq 0} \: \mathcal{U}^+_{n,\Z}.
\]
For a field $\K$, we then define $\widehat{\mathcal{U}}^+_{\K} := \prod_{n \geq 0} \left(\mathcal{U}^+_{n,\Z} \otimes \K\right)$.

\subsection{Kac-Moody groups}
\label{subsec Kac-Moody grps}
Let $A = (A_{ij})_{i,j \in I}$ be a GCM with associated root system $\Delta$, with simple roots $\{\alpha_i \mid i \in I\}$ and Weyl group $W$. We denote by $\mfG_A$ the \emph{constructive Tits functor of type $A$ of simply connected type}, see \cite{Tits87} or \cite[Definition 7.47]{boekTimothée}. Let $\K$ be a field. Then $\mfG_A(\K)$ is called the \emph{minimal Kac-Moody group of type $A$ over $\K$}. It is an amalgamated product of \emph{root subgroups} $\mfU_{\alpha}(\K) \cong (\K,+)$, indexed by the real roots $\alpha \in \Delta^{\re}$. We denote by $x_{\alpha} \colon (\K, +) \to \mfU_{\alpha}(\K)$ the corresponding group isomorphism. The group $\mfG_A(\K)$ is generated by the subgroups $\mfU_{\pm \alpha_i}(\K)$ associated to the simple roots and their opposites. 

When $A$ is of finite type, $\mfG_A(\K)$ coincides with the (simply connected) \emph{Chevalley group of type $A$ over $\K$} (see \cite[Exercise 7.50]{boekTimothée}). \par 
More precisely, if $A$ is of type $A_n$, with $n > 1$, then $\mfG_A(\K)$ is isomorphic to $\SL_{n+1}(\K)$, where the isomorphism is given by the identification 
\[
x_{\alpha_i}(\lambda) := I_{n+1} + \lambda E_{i, i+1}, \quad x_{-\alpha_i}(\lambda) := I_{n+1} + \lambda E_{i+1,i}, \qquad \forall \lambda \in \K, 1 \leq i \leq n.
\]
In case $A$ is of type $C_n$, then $\mfG_A(\K)$ is isomorphic to $\Sp_{2n}(\K)$, where the identification is given by
\begin{align*}
x_{\alpha_i}(\lambda) := \left(\begin{matrix}
I_{n} + \lambda E_{i,i+1} & 0 \\ 
0 & I_{n} - \lambda E_{i+1,i}
\end{matrix}\right), & \quad \text{ for } 1 \leq i \leq n-1, \\
 x_{-\alpha_i}(\lambda) := \left(\begin{matrix}
I_{n} + \lambda E_{i+1,i} & 0 \\ 0 & I_{n} - \lambda E_{i,i+1}
\end{matrix}\right), &  \quad \text{ for } 1 \leq i \leq n-1,
\end{align*}
and 
\[
x_{\alpha_n}(\lambda) := \left(\begin{matrix}
I_n & \lambda E_{n,n} \\
0 & I_n
\end{matrix}\right), \quad   x_{-\alpha_n}(\lambda) := \left(\begin{matrix}
I_n & 0 \\
\lambda E_{n,n} & I_n
\end{matrix}\right),
\]
for all $\lambda \in \K$. 

A pair of roots $\alpha, \beta \in \Delta^{\re}$ is said to be \emph{prenilpotent} if there exist some elements  $w , w' \in W$ such that $w\cdot \{\alpha,\beta\} \subseteq \Delta_+$ and $w' \cdot \{\alpha, \beta\} \subseteq \Delta_-$. For any prenilpotent pair of roots $\{\alpha,\beta\}$, the set $\left({\N}{\alpha} + {\N}{\beta}\right) \cap \Delta$ is finite and contained in $\Delta^{\re}$. The following relations hold in $\mfG_A(\K)$, for any $\lambda, \mu \in \K$ and any prenilpotent pair of roots $\{\alpha, \beta\}$:
\begin{equation}
\label{commutator relations chevalley}
[x_{\alpha}(\lambda),x_{\beta}(\mu)] = \prod_{\substack{\gamma = i \alpha + j\beta \in \Delta \\ i,j \in \N_{> 0}}} x_{\gamma}(C_{i,j}^{\alpha\beta} \lambda^{i} \mu^{j}),
\end{equation}
where $C_{i,j}^{\alpha\beta}$ are integers which may be computed explicitly, see for example \cite[Exp. XXIII, §3]{SGA} for the spherical rank 2 case.

For any GCM $A$, we set 
\[\mfU_A^+(\K) := \left\langle \mfU_{\alpha}(\K) \mid \alpha \in \Delta^{\re}_+\right\rangle \leqslant \mfG_A(\K).\]
We record the following fact. 
\begin{lemma}[{\cite[§1, Théorème]{AbramenkoMuhlherr}}]
\label{U+ generated by simple root subgroups}
Let $A$ be a 2-spherical GCM, with associated root system $\Delta$. Suppose $\K$ is a field containing at least 4 elements. Then $\mfU_A^+(\K)$ is generated by the root subgroups $\mfU_{\alpha_i}(\K)$ associated to the simple roots $\alpha_i$ of $\Delta$.
\end{lemma}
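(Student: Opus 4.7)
The plan is to show that the subgroup $H := \langle \mfU_{\alpha_i}(\K) \mid i \in I \rangle$ is in fact all of $\mfU_A^+(\K)$, by proving that $\mfU_\alpha(\K) \subseteq H$ for every positive real root $\alpha \in \Delta_+^{\re}$. I would argue by induction on $\htt(\alpha)$, the base case $\htt(\alpha) = 1$ being immediate since the height-one positive real roots are exactly the simple roots.

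For the inductive step, let $\alpha \in \Delta_+^{\re}$ with $\htt(\alpha) \geq 2$. Using standard Kac-Moody root system theory (write $\alpha = w(\alpha_j)$ for a minimal-length $w \in W$ and peel off a first reflection that reduces height), one can find a simple root $\alpha_i$ and a positive real root $\beta$ of height $\htt(\alpha)-1$ such that $\alpha = \alpha_i + \beta$ and the pair $\{\alpha_i, \beta\}$ is prenilpotent. By the induction hypothesis, $\mfU_\beta(\K) \subseteq H$, while $\mfU_{\alpha_i}(\K) \subseteq H$ by definition. The 2-spherical assumption guarantees that the rank-$2$ subsystem generated by $\alpha_i$ and $\beta$ is of finite type ($A_1 \times A_1$, $A_2$, $B_2$, or $G_2$), so the commutator relation (\ref{commutator relations chevalley})
\[
[x_{\alpha_i}(\lambda), x_\beta(\mu)] = \prod_{\substack{\gamma = j\alpha_i + k\beta \in \Delta \\ j, k \geq 1}} x_\gamma\bigl(C_{j,k}^{\alpha_i \beta} \lambda^j \mu^k\bigr)
\]
is a finite product of elements that all lie in $H$, with the $(j,k) = (1,1)$ factor being $x_\alpha(C_{1,1}^{\alpha_i\beta}\lambda\mu)$.

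The main obstacle is then to isolate the factor $x_\alpha(c \lambda \mu)$ from this product, i.e.\ to show that each individual root subgroup $\mfU_\gamma(\K)$ appearing in the right-hand side is contained in $H$. This is where the hypothesis $|\K| \geq 4$ enters. Fix $\mu \in \K^\times$ and vary $\lambda$: the product becomes a Laurent-style expression in $\lambda$ whose ``$\lambda^j$-coefficients'' correspond to the contributions from the roots $j\alpha_i + k\beta$. Since the rank-$2$ subsystem is finite, these polynomials in $\lambda$ have bounded degree (at most $3$ in the $G_2$ case), and evaluating at four distinct values of $\lambda$ yields a Vandermonde-type linear system whose solution expresses each $x_\gamma(\ast)$ as a product of commutators already known to lie in $H$. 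Having $|\K| \geq 4$ is exactly what makes this Vandermonde system solvable over $\K$.

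I would expect the combinatorial bookkeeping in the Vandermonde extraction, together with the precise choice of the decomposition $\alpha = \alpha_i + \beta$ in the Kac-Moody root system, to be the technically delicate step; everything else reduces to the structure of rank-$2$ Chevalley groups and the Serre-type commutator formula. This is in essence the approach of Abramenko–Mühlherr, reframed as an explicit induction on height.
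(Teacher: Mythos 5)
The paper gives no proof of this statement --- it is simply cited from Abramenko--M\"uhlherr --- so there is no in-paper argument to compare against; what follows assesses your proposal on its own merits.

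The inductive step contains a genuine gap. You assume that every positive real root $\alpha$ with $\htt(\alpha)\geq 2$ admits a decomposition $\alpha = \alpha_i + \beta$ with $\alpha_i$ simple, $\beta$ a positive real root of height $\htt(\alpha)-1$, and $\{\alpha_i,\beta\}$ prenilpotent. This already fails in the smallest $2$-spherical non-spherical example, type $\tilde{A}_2$: with null root $\delta = \alpha_0+\alpha_1+\alpha_2$, the element $\alpha := \alpha_1 + \delta = \alpha_0 + 2\alpha_1 + \alpha_2$ is a positive real root of height $4$, yet $\alpha - \alpha_0$ and $\alpha - \alpha_2$ are not roots at all, and $\alpha-\alpha_1 = \delta$ is imaginary, so no simple root can be peeled off. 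Writing $\alpha = 2\alpha_1 + (\alpha_0+\alpha_2)$ does not rescue things either: the pair $\{\alpha_1,\,\alpha_0+\alpha_2\}$ has Cartan integers $(-2,-2)$ and spans an $\tilde{A}_1$ subsystem, hence is \emph{not} prenilpotent, and the commutator formula~\eqref{commutator relations chevalley} is unavailable for it. The statement one actually needs (and must prove) is weaker: $\alpha$ is the sum of two positive real roots of strictly smaller height forming a prenilpotent pair --- in this example $\alpha = (\alpha_0+\alpha_1) + (\alpha_1+\alpha_2)$, which is an $A_2$-pair. Establishing the existence of such decompositions for an arbitrary $2$-spherical GCM is precisely the hard combinatorial content behind this lemma, and Abramenko--M\"uhlherr's argument is not a height induction at all; it works with closed sets of roots and the combinatorics of the associated twin building. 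A secondary, more minor issue is the Vandermonde extraction in the $G_2$ case: the roots $3\alpha_1+\alpha_2$ and $3\alpha_1+2\alpha_2$ share the $\lambda$-exponent $j=3$, so varying $\lambda$ alone cannot separate them and one must also vary $\mu$.
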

In the spherical case, elements of $\mfU^+_A(\K)$ moreover have the following normal form.
\begin{lemma}[{\cite[Theorem 5.3.3]{Carter}}]
\label{unique expression U+}
Let $A$ be a Cartan matrix with associated root system $\Delta$. Fix a total order $<$ on $\Delta_+$. Then each element $u \in \mfU^+_A(\K) $ has a unique expression of the form
\[
u = \prod_{\alpha \in \Delta_+} x_{\alpha}(\lambda_{\alpha}),
\]
where the product is taken over the roots in increasing order. 
\end{lemma}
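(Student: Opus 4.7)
The plan is to establish existence and uniqueness separately. For existence, since $A$ is a Cartan matrix the set $\Delta_+$ is finite and consists entirely of real roots, so by \Cref{U+ generated by simple root subgroups} (or directly from the definition of $\mfU_A^+(\K)$) every $u \in \mfU^+_A(\K)$ is a finite product of elements $x_\alpha(\lambda)$ with $\alpha \in \Delta_+$. I would then use (\ref{commutator relations chevalley}) as a rewriting rule: whenever two consecutive factors $x_\beta(\mu) x_\alpha(\lambda)$ appear with $\beta > \alpha$, replace them by $x_\alpha(\lambda) x_\beta(\mu) \cdot [x_\beta(\mu), x_\alpha(\lambda)]$, where the commutator lies in the subgroup generated by the $x_\gamma$ with $\gamma = i\alpha + j\beta$, $i,j \geq 1$. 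Each such $\gamma$ has height strictly greater than both $\htt(\alpha)$ and $\htt(\beta)$, so by prioritising exchanges among factors of the lowest heights and using that heights in $\Delta_+$ are bounded, the sorting process terminates after finitely many steps, yielding an expression in the prescribed order.

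For uniqueness, I would interpret the target as a unipotent algebraic group. Choose the total order $<$ on $\Delta_+$ refining the partial order by height, and consider the morphism of affine $\K$-varieties
\[
\mu \colon \K^{\abs{\Delta_+}} \to \mfU^+_A(\K), \quad (\lambda_\alpha)_{\alpha \in \Delta_+} \mapsto \prod_{\alpha \in \Delta_+} x_\alpha(\lambda_\alpha).
\]
Both source and target are smooth irreducible of dimension $\abs{\Delta_+}$, and the differential of $\mu$ at the origin is the sum map $\bigoplus_{\alpha \in \Delta_+} \mfg_\alpha \to \mfn^+_A(\K)$, which is an isomorphism by the root space decomposition of $\mfn^+_A(\K)$. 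Hence $\mu$ is an isomorphism of varieties, and in particular a bijection on $\K$-points. Equivalently, and without invoking algebraic geometry, one can fix a faithful representation of $\mathcal{U}^+_{\K}$, form $x_\alpha(\lambda) = \exp(\lambda e_\alpha)$, and recover the $\lambda_\alpha$ inductively on height: in the matrix $\mu((\lambda_\alpha))$, the weight-$\alpha$ entry equals $\lambda_\alpha$ plus a polynomial in the $\lambda_\gamma$ with $\htt(\gamma) < \htt(\alpha)$, so the coefficients can be read off one height-layer at a time.

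The main obstacle is uniqueness: existence is a purely combinatorial normalisation using (\ref{commutator relations chevalley}), but uniqueness genuinely requires either the algebro-geometric dimension count above or the careful bookkeeping of a faithful representation to disentangle the contributions of root subgroups of different heights. In either approach, the essential input is the $Q^+$-grading of $\mfn^+_A(\K)$ together with the fact that exponentiating $e_\alpha$ produces, to leading order in height, exactly the weight-$\alpha$ contribution.
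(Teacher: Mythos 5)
The paper does not prove this lemma; it simply cites \cite[Theorem~5.3.3]{Carter}, so there is no internal argument to compare yours against. Judged on its own terms, your existence argument is standard and fine: the collecting process via~(\ref{commutator relations chevalley}) works because $i\alpha+j\beta$ has strictly larger height than both $\alpha$ and $\beta$, and heights in a finite root system are bounded.

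Your uniqueness argument, however, has a real gap in the first version. A morphism of smooth irreducible varieties of the same dimension whose differential is invertible at a single point is \'etale there but need not be an isomorphism (nor even injective): $x\mapsto x^{3}-x$ on $\mathbb{A}^{1}$ is a counterexample. To make the variety-theoretic route work you would need additional input --- e.g.\ a filtration of $\mfU^{+}$ by normal subgroups indexed by height and an induction showing each graded piece is a vector group, or a birationality-plus-normality argument; "invertible differential at $0$, hence isomorphism" does not suffice. Your second version --- extracting $\lambda_{\alpha}$ from the degree-$\alpha$ component of the product in a faithful graded representation, inductively on height --- is the right idea and is essentially what Carter does, but note it is stated only for a total order refining height. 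The lemma asserts uniqueness for an \emph{arbitrary} total order on $\Delta_{+}$; you should add the (short) reduction: the change-of-coordinates between the normal form in a height-compatible order and in an arbitrary order is, by the commutator relations, polynomial and unitriangular with respect to height, hence bijective, so uniqueness transports to every order.
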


Let $\mfU^{\ma +}_A$ be the affine group scheme associated to a GCM $A$, as defined in \cite[§8.5.1-§8.5.2]{boekTimothée}. For any field $\K$, the group $\mfU^{\ma +}_A(\K)$ can be identified with a subgroup of the multiplicative group of invertible elements in $\widehat{\mathcal{U}}^+_{\K}$ (see \cite[Theorem 8.51 (3)]{boekTimothée}).
\begin{lemma}[{\cite[Corollary 8.75]{boekTimothée}}]
\label{inclusion U+ in Uma+}
Let $\K$ be a field. Then for any GCM $A$, there is an injective group homomorphism
\[\mfU^+_A(\K) \into \mfU^{\ma +}_A(\K).\]
\end{lemma}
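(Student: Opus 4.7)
The plan is to define the embedding on the simple root subgroups and extend via the presentation of $\mfU_A^+(\K)$. Concretely, for $\lambda \in \K$ and $i \in I$, I would send
\[
x_{\alpha_i}(\lambda) \longmapsto [\lambda]_i := \sum_{s \geq 0} \lambda^s e_i^{(s)} \in \widehat{\mathcal{U}}_{\K}^+.
\]
Each $[\lambda]_i$ is a unit in $\widehat{\mathcal{U}}_{\K}^+$ (with inverse $[-\lambda]_i$), and by the construction of $\mfU_A^{\ma +}(\K)$ in \cite[§8.5]{boekTimothée} it lies in the pro-unipotent group $\mfU_A^{\ma +}(\K)$. For a general positive real root $\alpha = w \cdot \alpha_i$, one then defines $[\lambda]_\alpha$ inductively by conjugation with lifts of the Weyl element $w$.

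To upgrade this assignment to a group homomorphism $\Psi \colon \mfU_A^+(\K) \to \mfU_A^{\ma +}(\K)$, I would verify that the defining relations of Tits's presentation of $\mfU_A^+(\K)$ are respected — namely, the Chevalley commutator relations \eqref{commutator relations chevalley} for each prenilpotent pair of positive real roots. Inside $\widehat{\mathcal{U}}_{\K}^+$ this amounts to an identity
\[
[\,[\lambda]_\alpha,[\mu]_\beta\,] = \prod_{\gamma = p\alpha + q\beta} [\,C_{p,q}^{\alpha,\beta}\,\lambda^p \mu^q\,]_\gamma,
\]
which is a formal computation with the divided-power series $[\lambda]_\alpha$ in the completed enveloping algebra. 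Crucially, the use of the divided-power $\Z$-form $\mathcal{U}_{\Z}^+$ makes this computation valid over arbitrary fields, regardless of characteristic.

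Injectivity is the main obstacle. To prove $\Ker \Psi = \{1\}$ I would exploit the height grading of $\widehat{\mathcal{U}}_{\K}^+$. Any nontrivial $u \in \mfU_A^+(\K)$ decomposes, via a normal form over positive real roots (in the spherical case, this is Lemma \ref{unique expression U+}; in general one uses that $u$ lies in some $\mfU_w^+(\K)$ indexed by a Weyl element $w$, which admits a similar product decomposition), as a product $\prod_\alpha x_{\alpha}(\lambda_\alpha)$ of root elements. The lowest-height non-constant component of $\Psi(u) - 1$ in $\widehat{\mathcal{U}}_{\K}^+$ is then a nonzero combination of real root vectors in $\mfn_A^+(\K)$, whose coefficients are precisely the leading scalars $\lambda_\alpha$. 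Since real root spaces of $\mfn_A^+(\K)$ are one-dimensional, these scalars can be read off, forcing $\Psi(u) \neq 1$. The bulk of the technical difficulty — carried out uniformly across all Kac-Moody types in \cite{boekTimothée} — is making the product decomposition and this "leading-order term" argument simultaneously precise in the non-spherical setting, which is why one appeals to the reference rather than reproducing the full proof here.
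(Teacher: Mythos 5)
Your definition of the map on simple root subgroups, sending $x_{\alpha_i}(\lambda)$ to the formal exponential $\sum_{s \geq 0} e_i^{(s)} \otimes \lambda^s \in \widehat{\mathcal{U}}^+_{\K}$, matches what the paper records immediately after the lemma statement, and the existence of the group homomorphism can in principle be obtained via the colimit description of $\mfU^+_A(\K)$ as the amalgam of its finite unipotent subgroups $U_w := \mfU^+_A(\K) \cap w \mfU^-_A(\K) w^{-1}$ together with the Chevalley commutator relations inside each $U_w$.

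The injectivity argument is where there is a genuine gap. You assert that any nontrivial $u \in \mfU^+_A(\K)$ lies in some $U_w$ and therefore admits a finite normal form $\prod_\alpha x_\alpha(\lambda_\alpha)$ over positive real roots. This fails as soon as $A$ is not of finite type. Any two roots in the inversion set $\Delta_+ \cap w\Delta_-$ form a prenilpotent pair, but $\mfU^+_A(\K)$ is not the union of the $U_w$; it is only their colimit over the non-directed Bruhat poset of $W$. Already in affine type $A_1^{(1)}$, the pair $\{\alpha_1, \alpha_2\}$ is not prenilpotent (its sum is an imaginary root), so $x_{\alpha_1}(\lambda)x_{\alpha_2}(\mu)$ with $\lambda,\mu \neq 0$ belongs to no $U_w$, and the same phenomenon occurs in type $\HB22$ for words involving all three simple root subgroups. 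Without a canonical finite decomposition of $u$ over real roots, the lowest-height component of $\Psi(u) - 1$ cannot be read off a normal form, and showing that this component is nonzero for $u \neq 1$ is essentially equivalent to the injectivity being proved: injectivity on each $U_w$ is immediate, so the entire difficulty lies in showing that the colimit map does not collapse distinct amalgamated words. That requires genuinely different input, such as compatibility of the inclusion with the Bruhat and Birkhoff decompositions or the faithful action of $\mfU^+_A(\K)$ on the associated building, rather than a leading-term computation in $\widehat{\mathcal{U}}^+_{\K}$.
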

Under the above inclusion map, the elements $x_{\alpha_i}(\lambda) \in \mfU^+_A(\K)$ are mapped to the elements \[\exp(\lambda e_i) := \sum_{s \in \N} e_i^{(s)} \otimes \lambda^s \in \widehat{\mathcal{U}}^+_{\K}.\]

\subsection{Singer elements}
Throughout this section, we fix an integer $k \geq 1$ and a prime power $q = p^r$.
\begin{lemma}[{\cite[Satz 3.10 \& Satz 7.3]{Huppert}}]
\label{maximal order GL}
The elements of maximal order in $\GL_k(\F_q)$ have order $q^k-1$.
\end{lemma}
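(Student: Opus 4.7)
The plan is to establish both directions separately: exhibit an element of $\GL_k(\F_q)$ of order exactly $q^k-1$, and show no element exceeds this bound.

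For existence, I would fix an $\F_q$-basis of $\F_{q^k}$ so as to identify the additive group $(\F_{q^k},+)$ with $\F_q^k$. Left multiplication by a generator $\zeta$ of the cyclic group $\F_{q^k}^{\times}$ is $\F_q$-linear, giving an element $\sigma_\zeta\in\GL_k(\F_q)$. Since $\sigma_\zeta^n(1)=\zeta^n$, the order of $\sigma_\zeta$ equals the order of $\zeta$, namely $q^k-1$. This classical \emph{Singer cycle} witnesses the lower bound on the maximal order.

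For the upper bound, let $g\in\GL_k(\F_q)$ and consider the commutative $\F_q$-subalgebra $R:=\F_q[g]\subseteq\Mat_{k\times k}(\F_q)$. By Cayley--Hamilton, the minimal polynomial of $g$ has degree at most $k$, so $\dim_{\F_q}R\leq k$ and consequently $|R|\leq q^k$. Since $g$ is invertible it lies in the abelian group $R^{\times}$; as $0\in R\setminus R^{\times}$, one has $|R^{\times}|\leq |R|-1\leq q^k-1$. Lagrange's theorem applied to the cyclic subgroup $\langle g\rangle\leq R^{\times}$ then forces the order of $g$ to divide $|R^{\times}|$, yielding $|g|\leq q^k-1$.

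The argument is essentially immediate once $g$ is viewed inside its commutative envelope $\F_q[g]$, and I do not anticipate any real obstacle. If one also wants the finer statement identifying the conjugacy class of maximal-order elements, one would additionally note that equality $|g|=q^k-1$ forces $R$ to be a field of order $q^k$ (so $R\cong\F_{q^k}$), hence $g$ is conjugate to a Singer cycle.
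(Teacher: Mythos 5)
Your argument is correct. Note that the paper itself gives no proof of this lemma; it simply cites Huppert's \emph{Endliche Gruppen I} (Satz 3.10 and Satz 7.3), so there is no ``paper proof'' to compare against in detail. Your two-sided argument — a Singer cycle realizing order $q^k-1$, and the commutative envelope $\F_q[g]$ giving the upper bound $|R^\times|\le q^k-1$ — is the standard textbook argument and is essentially what one finds in Huppert. The only point you glide over is why $g$ is actually a unit \emph{of} $R=\F_q[g]$ (and not merely invertible in $\Mat_{k\times k}(\F_q)$): since $g$ is invertible, the constant term $a_0$ of its minimal polynomial $x^m+a_{m-1}x^{m-1}+\cdots+a_0$ is nonzero, whence $g^{-1}=-a_0^{-1}(g^{m-1}+a_{m-1}g^{m-2}+\cdots+a_1)\in R$; with that one-line observation your chain $\langle g\rangle\le R^\times$, $|R^\times|\le|R|-1\le q^k-1$ is airtight. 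Your closing remark — that equality forces $R$ to be the field $\F_{q^k}$ so maximal-order elements are exactly the Singer cycles — is also correct and is precisely what \Cref{action is irreducible} and \Cref{centraliser of Singer element} build on later.
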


An element $S \in \GL_k(\F_q)$ which has order $q^k-1$ is called a \emph{Singer element}.

\begin{lemma}
\label{action is irreducible}
\label{Singer not block diag}
A Singer element $S \in \GL_k(\F_q)$ acts irreducibly on $\F_q^k$ via multiplication. In particular, a Singer element cannot be a block-diagonal matrix.
\end{lemma}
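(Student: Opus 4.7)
The plan is to argue by contradiction: suppose $S$ preserves a proper nontrivial subspace $W\subseteq\F_q^k$ of dimension $m$, with $1\leq m\leq k-1$. Choose a basis of $\F_q^k$ whose first $m$ vectors span $W$; in this basis, $S$ is block upper triangular
\[
S=\begin{pmatrix} S_1 & N \\ 0 & S_2\end{pmatrix},\qquad S_1\in\GL_m(\F_q),\ S_2\in\GL_{k-m}(\F_q).
\]
The goal is to bound $\ord(S)$ strictly below $q^k-1$, contradicting the assumption that $S$ is a Singer element.

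The key observation is that block upper triangularity is preserved under powers: $S^n$ has diagonal blocks $S_1^n$ and $S_2^n$. Setting $\ell:=\lcm(\ord(S_1),\ord(S_2))$, the matrix $S^\ell$ has identity diagonal blocks, hence is unipotent and so of $p$-power order. Since $\ord(S)=q^k-1$ is coprime to $p$, from $S^{\ell p^e}=I$ for some $e\geq 0$ one deduces $(q^k-1)\mid \ell$. On the other hand, by \Cref{maximal order GL} applied to $\GL_m(\F_q)$ and $\GL_{k-m}(\F_q)$, we have $\ord(S_i)\leq q^m-1$ and $\ord(S_2)\leq q^{k-m}-1$, so
\[
\ell\leq (q^m-1)(q^{k-m}-1)=q^k-q^m-q^{k-m}+1<q^k-1,
\]
since $m,k-m\geq 1$ and $q\geq 2$. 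This is the desired contradiction, so $S$ acts irreducibly.

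For the final clause, a block diagonal matrix with blocks of sizes $m$ and $k-m$ (with $1\leq m\leq k-1$) preserves the span of the first $m$ standard basis vectors, which is a proper nontrivial invariant subspace; this contradicts the irreducibility just established.

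I expect the proof to be routine — the only mildly subtle point is the passage from $S^\ell$ being unipotent to $\ord(S)\mid\ell$, which relies crucially on the fact that $\gcd(q^k-1,p)=1$. No additional machinery beyond \Cref{maximal order GL} is needed.
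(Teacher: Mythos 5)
Your proof is correct, but it takes a genuinely different route from the paper's. The paper invokes Maschke's theorem to decompose $\F_q^k$ into a direct sum of irreducible $\langle S\rangle$-submodules $V_1\oplus\cdots\oplus V_n$, and then bounds $o(S)$ by the product $\prod_i(q^{d_i}-1)\leq(q^{d_1}-1)(q^{d_2}-1)q^{k-d_1-d_2}$, reaching a numerical contradiction with $o(S)=q^k-1$. You instead start from a single invariant subspace $W$, put $S$ in block upper-triangular form, and handle the off-diagonal part explicitly: $S^\ell$ is unipotent, hence has $p$-power order, and the coprimality $\gcd(q^k-1,p)=1$ forces $(q^k-1)\mid\ell$, which clashes with the bound $\ell\leq(q^m-1)(q^{k-m}-1)<q^k-1$. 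Your version is more elementary in that it avoids Maschke's theorem entirely (the paper uses it precisely because $|\langle S\rangle|=q^k-1$ is coprime to $p$, so it applies; you make that same coprimality do the work directly on the unipotent factor instead). Both proofs are about the same length, and both rely on the same Lemma on maximal orders in $\GL_m(\F_q)$. One small typo: in the line "we have $\ord(S_i)\leq q^m-1$ and $\ord(S_2)\leq q^{k-m}-1$", the first subscript should be $S_1$.
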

\begin{proof}
Let $\F_q^k \cong V_1 \oplus \ldots \oplus V_n$ be the decomposition of $\F_q^k$ into irreducible $\langle S \rangle$-representations provided by Maschke's theorem, where the action of the Singer element $S \in \GL_k(\F_q)$ on $\F_q^k$ is given by matrix multiplication. Then by \Cref{maximal order GL} the order of $S|_{V_i}$ is at most $q^{d_i} -1$, with $d_i := \dim_{\F_q}(V_i)$. If $n >1$, then 
\begin{align*}
o(S) = q^k-1 & \leq \mathrm{lcm}\left\{o(S|_{V_i})  \mid 1 \leq i \leq n\right\} \\
& \leq \prod_{i = 1}^n (q^{d_i}-1) \leq (q^{d_1} -1) (q^{d_2}-1)q^{k-d_1-d_2}.
\end{align*}
But the latter inequality is equivalent to 
\[q^{d_1+d_2-k} \geq q^{d_1}+q^{d_2}-1,\]
a contradiction.
\end{proof}

The following fact is essential to our work below. 
\begin{proposition}[{\cite[II, Satz 7.3]{Huppert}}]
\label{centraliser of Singer element}
For any Singer element $S \in \GL_k(\F_q)$, the subgroup generated by $S$ is its own centraliser in $\GL_{k}(\F_q)$.
\end{proposition}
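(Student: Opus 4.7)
The plan is to realize $\langle S \rangle$ as the multiplicative group of a maximal subfield of $\Mat_k(\F_q)$, and then deduce the centralizer statement from Schur-type linear algebra.

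First, I would exploit \Cref{action is irreducible}: $S$ acts irreducibly on $V := \F_q^k$. A short standard argument then shows that the minimal polynomial $m_S(x) \in \F_q[x]$ is irreducible (if $m_S = fg$ with both factors nonconstant, then $\ker g(S)$ is a nontrivial proper $S$-invariant subspace, contradicting irreducibility). Consequently, the commutative subalgebra $A := \F_q[S] \subseteq \End_{\F_q}(V) = \Mat_k(\F_q)$ is isomorphic to $\F_q[x]/(m_S(x))$ and is therefore a field, of dimension $\deg(m_S) \leq k$ over $\F_q$. Hence $|A| \leq q^k$.

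Next I would pin down $A$ exactly using the order of $S$. Since $\langle S \rangle \subseteq A^{\times}$ and $|\langle S\rangle| = q^k-1$, we get $|A|-1 \geq q^k -1$, forcing $|A| = q^k$ and $A \cong \F_{q^k}$. Comparing orders, $\langle S\rangle = A^{\times}$. In particular, $A \cap \GL_k(\F_q) = A \setminus \{0\} = \langle S\rangle$.

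Finally, the centralizer computation. Take any $T \in C_{\GL_k(\F_q)}(S)$; then $T$ commutes with every element of $A = \F_q[S]$, so viewing $V$ as an $A$-module (which makes sense because $A$ is a field contained in $\End_{\F_q}(V)$), the map $T \colon V \to V$ is $A$-linear. Since $\dim_A(V) = \dim_{\F_q}(V)/\dim_{\F_q}(A) = k/k = 1$, any $A$-linear endomorphism of $V$ is multiplication by a scalar, so $T \in A$. As $T$ is invertible, $T \in A^{\times} = \langle S\rangle$. The reverse inclusion $\langle S\rangle \subseteq C_{\GL_k(\F_q)}(S)$ is immediate.

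The only subtle point is the irreducibility of the minimal polynomial and the accompanying dimension count $\dim_A(V) = 1$; once these are in place, the identification $\langle S \rangle = A^{\times} = C_{\GL_k(\F_q)}(S)$ follows from Schur-style reasoning. No serious obstacle is expected.
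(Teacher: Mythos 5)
The paper does not actually prove this proposition; it is stated as a citation to Huppert [II, Satz 7.3] with no argument supplied, so there is nothing in the text to compare your proof against. Your proof is correct and self-contained. The chain of reasoning — (a) irreducibility of the $\langle S\rangle$-action on $\F_q^k$ (which the paper does prove, as \Cref{action is irreducible}) forces the minimal polynomial $m_S$ to be irreducible, so $\F_q[S]$ is a field of $\F_q$-dimension $\deg m_S \le k$; (b) the order $q^k-1$ of $S$ then forces $|\F_q[S]| = q^k$, $\F_q[S]\cong\F_{q^k}$, and $\langle S\rangle = \F_q[S]^{\times}$; (c) since $\F_q^k$ is one-dimensional over $\F_q[S]$, any matrix commuting with $S$ commutes with all of $\F_q[S]$, is $\F_q[S]$-linear, and is therefore scalar multiplication by an element of $\F_q[S]$, invertible hence in $\langle S\rangle$ — is the standard double-centralizer/Schur argument for Singer cycles, and every step is sound. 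One small virtue of your write-up is that it derives the proposition from \Cref{action is irreducible}, which the paper already establishes, rather than leaving the fact entirely to the external reference.
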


\subsection{Overgroups of field extension subgroups}
Let $\K$ be a field. Suppose $f \colon V \to V$ is an endomorphism of an $n$-dimensional $\K$-vector space. Let $\mathcal{B}$ be a $\K$-basis of $V$. Then we denote by $[f]_{\mathcal{B}}$ the $n\times n$-matrix of $f$ with respect to the basis $\mathcal{B}$.

A \emph{symplectic form} is a non-degenerate, alternating bilinear form \[f \colon \K^{2n} \times \K^{2n} \to \K.\]
For a symplectic form $f$ and a fixed $\K$-basis $\mathcal{B} := \{e_1, \ldots, e_{2n}\}$ of $\K^{2n}$, we define its associated matrix $\Omega_f := (f(e_i,e_j))_{1 \leq i,j \leq 2n}$. The \emph{symplectic group of $f$} is given by 
\[
\Sp_{2n}(\K, \Omega_f) := \left\{ X \in \GL_{2n}(\K) \mid X^t \Omega_f X = \Omega_f\right\}.
\]

The \emph{standard symplectic form of rank $n$} is given by the symplectic form defined by the $2n \times 2n$-matrix \[\Omega_n := \left(\begin{matrix}
0 & -I_n \\ I_n & 0 
\end{matrix}\right).\]
The associated \emph{standard symplectic group} will be denoted by $\Sp_{2n}(\K) := \Sp_{2n}(\K,\Omega_n )$. A Gramm-Schmidt process shows that for any symplectic form $f$, there exists a basis $\mathcal{B}$ such that the associated matrix $\Omega_f$  equals $\Omega_n$. In particular, for all symplectic forms $f \colon \K^{2n} \times \K^{2n} \to \K$, the groups $\Sp_{2n}(\K,\Omega_f)$ are isomorphic.

If $n = 1$, then every symplectic form $f \colon \K^2 \times \K^2 \to \K$ is given by a matrix 
\[\Omega_f := \left(\begin{matrix}
0 & -x \\ x & 0
\end{matrix}\right), \]
for some $x \in \K^{\times}$. It follows that there is precisely one group $\Sp_{2}(\K,\Omega_f)$, which coincides with $\SL_2(\K)$.

Suppose $\mathbb{L}/\mathbb{K}$ is a field extension of degree $k$. Then $\mathbb{L}$ is a $k$-dimensional vector space over $\K$. For any $x \in \mathbb{L}$, we denote by $\mu_x \colon \mathbb{L } \to \mathbb{L}$ the $\K$-linear map defined by $\mu_x(y) := xy$ for all $y \in \mathbb{L}$.

Let $m \geq 1$. Any subgroup $G \leqslant \GL_{m}(\mathbb{L})$ can be embedded as a subgroup of $\GL_{km}(\K)$ by considering the action of $g \in G$ on the $km$-dimensional $\K$-space ${\mathbb{L}}^m$. Explicitly, one fixes a $\K$-basis $\mathcal{B} := \{b_1, \ldots, b_k \}$ of $\mathbb{L}$. The inclusion $G \into \GL_{km}(\K)$ is then given by\[(x_{i,j})_{1 \leq i,j \leq m} \mapsto ([\mu_{x_{i,j}}]_{\mathcal{B}})_{1 \leq i,j \leq m}.\]

Below we will consider groups $X \leqslant \SL_{2k}(\K)$, such that $X$ contains an isomorphic copy of $\SL_2(\mathbb{L})$, with the embedding given by the fixed choice of basis $\mathcal{B}$ of $\mathbb{L}$ over $\K$.

Let $m \geq 2$ be even. Denote by $\Psi \colon \K^k \to \mathbb{L}$ the vector space isomorphism given by the $\K$-basis $\mathcal{B}$. For a symplectic form $f \colon \mathbb{L}^m \times \mathbb{L}^m \to \mathbb{L}$ and a $\K$-linear homomorphism $\sigma \colon \mathbb{L} \to \K$, there is an associated symplectic form $B_{f,\sigma}$, given by the commutative diagram
\begin{equation}
\label{definitie B_f}
\begin{tikzcd}
\mathbb{L}^m \times \mathbb{L}^m \arrow[r, "f"] & \mathbb{L} \arrow[d, "\sigma"] \\
\K^{km} \times \K^{km} \arrow[u, "\Psi^{\times m} \times \Psi^{\times m}"] \arrow[r,"B_{f,\sigma}"] & \K
\end{tikzcd}
\end{equation}

Denote by $\{e_1, \ldots, e_m\}$ the standard $\mathbb{L}$-basis of $\mathbb{L}^m$. Then \[\Omega_{B_{f,\sigma}} = (B_{f,\sigma}(b_{i_1}e_{j_1},b_{i_2}e_{j_2}))_{1 \leq i_{\ell} \leq k, 1 \leq j_{\ell}\leq m}\] is the matrix associated to the symplectic form $B_{f,\sigma}$ defined in \cref{definitie B_f}.

In the case of finite fields, with $\K = \F_q$ and $\mathbb{L} = \F_{q^k}$ (for $q$ a prime power) and $k \geq 1$, the $\K$-linear homomorphisms $\sigma \colon \mathbb{L} \to \K$ have the following description:
\begin{equation}
\label{Hom Fqk Fq}
\Hom_{\F_q}(\F_{q^k}, \F_q) = \left\{\phi_x := \Tr_{{\F_{q^k}}/{\F_q}} \circ \mu_x \mid x \in \F_{q^k}\right\},
\end{equation}
with 
\[\Tr_{{\F_{q^k}}/{\F_q}} \colon \F_{q^k} \to \F_q, \quad x \mapsto \sum_{\tau \in \Gal({\F_{q^k}}/{\F_q})} \tau(x).\]

We can now formulate the following consequence of \cite[Theorem 1]{Li}.
\begin{proposition}
\label{refinement Li}
Let $p > 2$ and $k$ be primes, and let $q = p^r$ for some $r \geq 1$. Let $N$ be the image of the embedding of $\SL_2(\F_{q^k}) = \Sp_2(\F_{q^k},\Omega_1)$ in $\SL_{2k}(\F_q)$ for a fixed choice of $\F_q$-basis $\mathcal{B} = \{b_1, \ldots, b_k\}$ of $\F_{q^k}$. Let $X \leqslant \SL_{2k}(\F_q)$ be a subgroup containing $N$. Then precisely one of the following holds:
\begin{enumerate}[(i)]
\item $N \< X$,
\item $X = \SL_{2k}(\F_q)$,
\item There exists an $x \in \F_{q^k}^{\times}$ such that $\Sp_{2k}(\F_q,\Omega_{B_{f,\phi_x}}) \< X$,  where the symplectic form $B_{f,\phi_x}$ is as defined in \cref{definitie B_f} and $f \colon \F_{q^k}^2 \times \F_{q^k}^2 \to \F_{q^k}$ is the standard symplectic form defined by $\Omega_f = \Omega_1$.
\end{enumerate}
\end{proposition}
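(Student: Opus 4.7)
The strategy is to apply \cite[Theorem 1]{Li}, which classifies the overgroups of the image of the field-extension embedding $\SL_m(\F_{q^k}) \hookrightarrow \SL_{mk}(\F_q)$, and then specialize to the case $m = 2$, exploiting the identification $\SL_2 = \Sp_2$ to interpret the geometric families appearing in Li's list as instances of cases (i)--(iii).

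First, I would record the source of the symplectic forms appearing in (iii). Since $N$ is the image of $\Sp_2(\F_{q^k}, \Omega_1)$, it preserves the standard $\F_{q^k}$-symplectic form $f$ on $\F_{q^k}^2$. For each $x \in \F_{q^k}^{\times}$, the map $\phi_x = \Tr_{\F_{q^k}/\F_q} \circ \mu_x$ is a nonzero $\F_q$-linear functional, so the construction in \cref{definitie B_f} produces an $\F_q$-bilinear form $B_{f, \phi_x}$ on $\F_q^{2k}$ which is alternating, non-degenerate (using non-degeneracy of $f$ and surjectivity of $\phi_x$), and preserved by $N$. In particular $N \leqslant \Sp_{2k}(\F_q, \Omega_{B_{f,\phi_x}})$ for every such $x$, so case (iii) is a genuine possibility.

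Next I would apply Li's theorem: for $X$ with $N \leqslant X \leqslant \SL_{2k}(\F_q)$, the classification lists the possibilities for $X$ in terms of maximal-type overgroups of $N$ in $\SL_{2k}(\F_q)$. Under the hypotheses of the proposition, these fall into three geometric types: (a) the normalizer of $N$ in $\SL_{2k}(\F_q)$, arising from preservation of the $\F_{q^k}$-vector space structure on $\F_q^{2k}$; (b) stabilizers of non-degenerate alternating $\F_q$-bilinear forms on $\F_q^{2k}$ containing $N$; and (c) the full group $\SL_{2k}(\F_q)$. The remaining Aschbacher-type families from Li's list are excluded by the standing hypotheses: the primality of $k$ eliminates intermediate subfields of $\F_{q^k}/\F_q$; the odd prime $p$ together with the presence of $\SL_2(\F_{q^k})$ inside $N$ rules out orthogonal, unitary, subfield and extraspecial-normalizer overgroups on order-theoretic and structural grounds.

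Finally, I would translate each geometric type into one of (i)--(iii). If $X$ normalizes $N$, then $N \triangleleft X$, yielding (i). If Li's theorem forces $X = \SL_{2k}(\F_q)$, we get (ii). If $X$ is contained in a stabilizer $\Sp_{2k}(\F_q, \Omega)$ of some non-degenerate alternating $\F_q$-form $\Omega$, then $\Omega$ is $N$-invariant, and an application of Schur's lemma (viewing $\F_q^{2k} = \F_{q^k}^2$ as an irreducible $N$-module over $\F_{q^k}$) shows that the space of $N$-invariant alternating $\F_q$-forms on $\F_q^{2k}$ is isomorphic, via pairing with $f$, to $\Hom_{\F_q}(\F_{q^k}, \F_q)$; using \cref{Hom Fqk Fq} this yields $\Omega = \Omega_{B_{f,\phi_x}}$ (up to a scalar absorbed into the stabilizer) for some $x \in \F_{q^k}^{\times}$, so $\Sp_{2k}(\F_q, \Omega) = \Sp_{2k}(\F_q, \Omega_{B_{f, \phi_x}}) \triangleleft X$, giving (iii). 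Mutual exclusivity follows from the quasisimplicity of $\Sp_{2k}(\F_q)$ and $\SL_{2k}(\F_q)$ modulo center under our hypotheses on $p$ and $k$: $N$ is a proper, non-normal subgroup of both the symplectic group and $\SL_{2k}(\F_q)$, so cases (i), (ii) and (iii) cannot overlap. The main obstacle will be the Schur-lemma parametrization of $N$-invariant alternating $\F_q$-bilinear forms on $\F_q^{2k}$ and its identification with the explicit family $\{B_{f, \phi_x} \mid x \in \F_{q^k}^{\times}\}$; this is a classical calculation but has to be matched to the precise normalization chosen in \cref{definitie B_f}.
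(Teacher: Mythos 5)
Your overall strategy matches the paper: cite \cite[Theorem 1]{Li}, use primality of $k$ to eliminate the intermediate-subfield alternative, and then identify the relevant $\F_q$-linear functionals with the family $\{\phi_x\}$ via \cref{Hom Fqk Fq}. Your Schur's lemma parametrization of $N$-invariant alternating $\F_q$-forms is a perfectly reasonable (if more laborious) substitute for simply invoking \cref{Hom Fqk Fq}, so that part of the plan is sound.

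However, there is a genuine direction error in your handling of the symplectic case. You begin with ``$X$ is contained in a stabilizer $\Sp_{2k}(\F_q,\Omega)$'' and conclude ``$\Sp_{2k}(\F_q,\Omega) = \Sp_{2k}(\F_q,\Omega_{B_{f,\phi_x}}) \< X$.'' These two inclusions point in opposite directions: if $X$ is a proper subgroup of $\Sp_{2k}(\F_q,\Omega)$, the latter is certainly not normal in (indeed not even contained in) $X$. What Li's theorem actually delivers --- and what the paper uses --- is that some symplectic group of an $N$-invariant form is \emph{normal in $X$}, i.e.\ $\Sp_{2k}(\F_q,B_{f,\sigma}) \< X$ for some $\sigma \in \Hom_{\F_q}(\F_{q^k},\F_q)$; the classification is stated in terms of a specific normal subgroup of the overgroup $X$, not in terms of a maximal subgroup of $\SL_{2k}(\F_q)$ containing $X$. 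Your phrasing suggests you may be thinking of an Aschbacher-style classification of maximal overgroups (in which $X$ \emph{is contained in} one of the listed subgroups) rather than Li's overgroup theorem; reconstructing the latter from the former would require an additional recursion that you do not carry out, and your vague dismissal of the other Aschbacher families ``on order-theoretic and structural grounds'' would then become load-bearing rather than decorative. To repair the proof, simply quote Li's theorem in the form the paper does (a trichotomy plus the intermediate-field alternative, with the symplectic group appearing as a normal subgroup of $X$), discard the intermediate-field case by primality of $k$, and then apply \cref{Hom Fqk Fq} --- or your Schur's-lemma argument --- to rewrite $\sigma$ as $\phi_x$.
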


The matrix $\Omega_{B_{f,\phi_x}}$ in the above proposition is by definition given by 
\begin{equation}
\Omega_{B_{f,\phi_x}} = \left(
\begin{matrix}
0 & (-\Tr(x b_i b_j))_{1 \leq i,j \leq k} \\ (\Tr(x b_i b_j))_{1 \leq i,j \leq k} & 0
\end{matrix}
\right), \text{ where } \Tr := \Tr_{{\F_{q^k}}/{\F_q}}.
\end{equation}
\begin{proof}[Proof of \Cref{refinement Li}]
Since $k$ is prime by assumption, there are no intermediate fields $\K$ strictly between $\F_{q^k}$ and $\F_{q}$. Then \cite[Theorem 1]{Li} implies that precisely one of (i) or (ii) holds, or that there is some $\sigma \in \Hom_{\F_q}(\F_{q^k},\F_q)$ such that $\Sp_{2k}(\F_q,B_{f,\sigma})) \< X$, with $\Omega_f = \Omega_1$. The result then follows by \cref{Hom Fqk Fq}.
\end{proof}

The following is another immediate consequence of \cite[Theorem 1]{Li}.
\begin{proposition}
\label{normaliser of SL2}
Let $p > 2$ and $k$ be primes, and let $q = p^r$ for some $r \geq 1$. Let $N$ be the image of the embedding of $\SL_2(\F_{q^k})$ in $\GL_{2k}(\F_q)$ for a fixed choice of $\F_q$-basis of $\F_{q^k}$. Then the normaliser $N_{\GL_{2k}(\F_q)}(N)$ of $N$ in $\GL_{2k}(\F_q)$ is isomorphic to $\GL_2(\F_{q^k})\rtimes \Gal({\F_{q^k}}/{\F_q})$. Under this isomorphism, the inclusion of $N$ in $N_{\GL_{2k}(\F_q)}(N)$ corresponds to the natural inclusion of $\SL_{2}(\F_{q^k})$ as a subgroup of the first factor of the semidirect product $\GL_2(\F_{q^k})\rtimes \Gal({\F_{q^k}}/{\F_q})$.
\end{proposition}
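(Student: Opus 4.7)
The plan is to prove both inclusions between $\mathcal{N} := N_{\GL_{2k}(\F_q)}(N)$ and the claimed semidirect product $\GL_2(\F_{q^k}) \rtimes \Gal(\F_{q^k}/\F_q)$.

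For the forward direction, I use the basis $\mathcal{B}$ to embed $\GL_2(\F_{q^k})$ in $\GL_{2k}(\F_q)$ by the exact recipe that produced $N$ from $\SL_2(\F_{q^k})$; let $H$ denote the image. Since $\SL_2(\F_{q^k}) \< \GL_2(\F_{q^k})$, one has $N \< H \leqslant \mathcal{N}$, and the inclusion $N \hookrightarrow H$ is literally the natural one into the first factor of the desired product, which already verifies the second assertion of the proposition. For the Galois factor, each $\sigma \in \Gal(\F_{q^k}/\F_q)$ is $\F_q$-linear on $\F_{q^k}$, so corresponds via $\mathcal{B}$ to a matrix $[\sigma]_{\mathcal{B}} \in \GL_k(\F_q)$; letting $\sigma$ act diagonally on $\F_{q^k}^2$ yields an embedding $\Gamma \cong \Gal(\F_{q^k}/\F_q) \hookrightarrow \GL_{2k}(\F_q)$. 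A direct computation shows that conjugation by $\Gamma$ on $H$ realises the entry-wise Galois action on $\GL_2(\F_{q^k})$, which preserves the $\det = 1$ condition, so $\Gamma \leqslant \mathcal{N}$. With $H \cap \Gamma = \{1\}$ (elements of $H$ are $\F_{q^k}$-linear on $\F_{q^k}^2$ whereas non-trivial elements of $\Gamma$ are not), this produces a copy $H \rtimes \Gamma \cong \GL_2(\F_{q^k}) \rtimes \Gal(\F_{q^k}/\F_q)$ inside $\mathcal{N}$.

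For the reverse direction, I bound $|\mathcal{N}|$ via the conjugation morphism $\Phi \colon \mathcal{N} \to \Aut(N) = \Aut(\SL_2(\F_{q^k})) = \PGL_2(\F_{q^k}) \rtimes \Gal(\F_{q^k}/\F_p)$. Its kernel is the centraliser $C := C_{\GL_{2k}(\F_q)}(N)$. Since $N$ generates the $\F_q$-subalgebra $\Mat_2(\F_{q^k}) \subseteq \End_{\F_q}(\F_{q^k}^2) = \Mat_{2k}(\F_q)$, the bicommutant theorem (equivalently Schur's lemma for the absolutely irreducible $\SL_2(\F_{q^k})$-module $\F_{q^k}^2$) gives $C = \F_{q^k}^{\times}$, so $|C| = q^k - 1$. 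For the image, every $\varphi \in \Phi(\mathcal{N})$ is induced by $\F_q$-linear conjugation, hence yields an $\F_q$-algebra automorphism of $\Mat_2(\F_{q^k})$; by Skolem--Noether its Galois part must fix $\F_q$, so $\Phi(\mathcal{N}) \subseteq \PGL_2(\F_{q^k}) \rtimes \Gal(\F_{q^k}/\F_q)$. Consequently
\[
|\mathcal{N}| \leq (q^k - 1) \cdot |\PGL_2(\F_{q^k})| \cdot k = |\GL_2(\F_{q^k})| \cdot k = |H \rtimes \Gamma|,
\]
which combined with the forward direction forces $\mathcal{N} = H \rtimes \Gamma$.

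The main technical obstacle is controlling the map $\Phi$: identifying the centraliser as exactly $\F_{q^k}^\times$ and, crucially, confining the Galois part of $\Phi(\mathcal{N})$ to $\Gal(\F_{q^k}/\F_q)$ rather than the full $\Gal(\F_{q^k}/\F_p)$. Since the author presents this as an immediate consequence of Li's theorem, the intended shortcut is presumably to read off $N_{\SL_{2k}(\F_q)}(N)$ directly from Li's classification (case (i) of the preceding proposition gives exactly the overgroups in which $N$ is normal), and then to upgrade from $\SL_{2k}(\F_q)$ to $\GL_{2k}(\F_q)$ by observing that $\det \colon H \to \F_q^\times$ is surjective via the norm $\mathrm{Nm}_{\F_{q^k}/\F_q}$ (which is surjective for finite-field extensions), so every coset of $\SL_{2k}(\F_q)$ in $\mathcal{N}$ is already realised inside $H$.
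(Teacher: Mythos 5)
Your proof is correct, but it takes a genuinely different route from the paper. The paper's proof is a one-liner: it simply invokes \cite[Theorem 1]{Li} with $X := N_{\GL_{2k}(\F_q)}(N)$, reading the normaliser off directly from Li's classification of overgroups of $\SL_2(\F_{q^k})$ in $\GL_{2k}(\F_q)$. (Note that Li's actual theorem lives in $\GL_{2k}$; the restriction to $\SL_{2k}$ in the paper's Proposition~\ref{refinement Li} is only the specialisation the authors need elsewhere, so the paper is not secretly doing the $\SL$-to-$\GL$ upgrade you speculate about at the end.) Your argument, by contrast, is self-contained and doesn't touch Li: the lower bound comes from exhibiting $H\rtimes\Gamma$ explicitly inside the normaliser, and the upper bound from analysing the conjugation map $\Phi\colon\mathcal{N}\to\Aut(N)$ — computing $\ker\Phi$ via the bicommutant (Schur) argument and confining $\Ima\Phi$ to $\PGL_2(\F_{q^k})\rtimes\Gal(\F_{q^k}/\F_q)$ via Skolem--Noether applied to the $\F_q$-algebra $\Mat_2(\F_{q^k})$ that $N$ generates. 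This buys you transparency and independence from Li's theorem at the cost of some length and reliance on the structure theory of $\Aut(\SL_2(\F_{q^k}))$; the paper's version buys brevity and keeps a single external reference doing all the overgroup work throughout \cref{sect:preliminaries}. One small stylistic point: what actually pins down the Galois part is not Skolem--Noether per se but the prior observation that any $\F_q$-algebra automorphism of $\Mat_2(\F_{q^k})$ restricts on the centre $\F_{q^k}\cdot I$ to an element of $\Gal(\F_{q^k}/\F_q)$; Skolem--Noether then identifies the kernel of that restriction with $\PGL_2(\F_{q^k})$. Your phrasing "by Skolem--Noether its Galois part must fix $\F_q$" slightly conflates the two steps, but the argument is sound.
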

\begin{proof}
It suffices to take $X := N_{\GL_{2k}(\F_q)}(N)$ in the statement of \cite[Theorem 1]{Li}.
\end{proof}

\section{Integrating group homomorphisms from \texorpdfstring{$\mfn^+$}{n+} to \texorpdfstring{$\mfU^+_A(\K)$}{U+}}
\label{sect:integration}
Throughout this section, let $A$ be a $2$-spherical GCM. Set $\ell = \abs{I}$ and let $e_1, \ldots, e_{\ell}$ be the canonical generators of $\mfn^+ := \mfn^+_A(\C)$. 
We consider Lie algebra morphisms
\[\phi \colon \mfn^+ \to B,\]
where $B$ is a unital associative $\C$-algebra, which is a Lie algebra for the \emph{additive commutator} $[x,y] := xy - yx$, for all $x, y\in B$. By the universal property of the universal enveloping algebra, there is an associated morphism of unital associative $\C$-algebras $\Phi \colon \mathcal{U}^+_{\C} \to B$.

The following then allows to integrate certain Lie algebra morphisms on $\mathfrak{n}^+$ to $\mfU^+_A(\K)$.
\begin{proposition}
\label{prop:alg morphism lifts to group morphism}
Let $\phi \colon \mfn^+ \to B$ be a  Lie algebra morphism, with associated algebra morphism $\Phi \colon \mathcal{U}^+_{\C} \to B$. Suppose that for every $i \in I$, the element $\phi(e_i) \in B$ is nilpotent. 
Suppose moreover that there is a $\Z$-form $B_{\Z}$ of $B$ such that $\Phi(\mathcal{U}_{\Z}^+) \subseteq B_{\Z}$. Let $\K$ be a field, and write $\Phi_{\K} \colon \mathcal{U}^+_{\K} \to B_{\Z} \otimes \K$ for the induced $\K$-algebra morphism. Then there is a subalgebra $\widetilde{\mathcal{U}}^+_{\K} \subseteq \widehat{\mathcal{U}}^+_{\K}$ containing $\mathcal{U}^+_{\K}$ and satisfying the following properties: 
\begin{enumerate}[(i)]
\item $\Phi_{\K}$ extends to  $\widetilde{\mathcal{U}}^+_{\K}$.
\item If $\abs{\K} \geq 4$, then \[\mfU^+_A(\K) \subseteq \left(\widetilde{\mathcal{U}}^+_{\K}\right)^{\times}.\] 
\end{enumerate}
In particular, for any field $\K$ with $\abs{\K} \geq 4$, there is an associated group homomorphism
\[\Phi_{\K} \colon \mfU^+_A(\K) \to (B_{\Z} \otimes \K)^{\times},\qquad x_{\alpha_i}(\lambda) \mapsto \sum_{n \in \N} \frac{1}{n!} \phi(e_i)^n \otimes \lambda^n.\]
\end{proposition}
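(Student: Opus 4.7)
The plan is to construct an intermediate subalgebra $\widetilde{\mathcal{U}}^+_\K \subseteq \widehat{\mathcal{U}}^+_\K$ on which the formal extension of $\Phi_\K$ by homogeneous summation gives a \emph{finite} sum in $B_\Z \otimes \K$. The base morphism $\Phi_\K \colon \mathcal{U}^+_\K \to B_\Z \otimes \K$ is obtained from $\Phi|_{\mathcal{U}^+_\Z} \colon \mathcal{U}^+_\Z \to B_\Z$ by extension of scalars. For each $u \in \widehat{\mathcal{U}}^+_\K$, writing $u = \sum_{n \geq 0} u_n$ with $u_n \in \mathcal{U}^+_{n,\Z} \otimes \K$, I would set
\[
\widetilde{\mathcal{U}}^+_\K := \bigl\{ u \in \widehat{\mathcal{U}}^+_\K \,\big|\, \Phi_\K(u_n) = 0 \text{ for all but finitely many } n \bigr\},
\]
and extend $\Phi_\K$ to this set by $\Phi_\K(u) := \sum_{n \geq 0} \Phi_\K(u_n)$.

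Next, I would verify that $\widetilde{\mathcal{U}}^+_\K$ is a subalgebra of $\widehat{\mathcal{U}}^+_\K$ and that this extension is an algebra morphism. Closure under addition is immediate; closure under multiplication uses that for $u, v \in \widetilde{\mathcal{U}}^+_\K$ with respective cutoffs $K, L$, each graded component $(uv)_n = \sum_{k+l=n} u_k v_l$ lies in $\mathcal{U}^+_\K$, giving $\Phi_\K((uv)_n) = \sum_{k+l=n} \Phi_\K(u_k) \Phi_\K(v_l) = 0$ for $n > K + L$. The nilpotency hypothesis then enters to show $\exp(\lambda e_i) \in \widetilde{\mathcal{U}}^+_\K$: from $\Phi(e_i^{(s)}) = \phi(e_i)^s / s!$ in $B$ and the nilpotency of $\phi(e_i)$, one has $\Phi(e_i^{(s)}) = 0$ in $B_\Z \subseteq B$ for $s$ sufficiently large, so $\exp(\lambda e_i) = \sum_{s \geq 0} e_i^{(s)} \otimes \lambda^s$ has only finitely many nonzero images under $\Phi_\K$. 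This establishes property (i).

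For property (ii), assume $\abs{\K} \geq 4$. \Cref{inclusion U+ in Uma+} gives an embedding $\mfU^+_A(\K) \hookrightarrow \mfU^{\ma +}_A(\K) \subseteq (\widehat{\mathcal{U}}^+_\K)^\times$ sending $x_{\alpha_i}(\lambda) \mapsto \exp(\lambda e_i)$, and \Cref{U+ generated by simple root subgroups} says $\mfU^+_A(\K)$ is generated by the simple root subgroups. Hence every element of $\mfU^+_A(\K)$ maps to a product of factors $\exp(\pm \lambda e_i)$, all lying in the subalgebra $\widetilde{\mathcal{U}}^+_\K$. Invertibility within $\widetilde{\mathcal{U}}^+_\K$ is then automatic, since for any $x \in \mfU^+_A(\K)$ the group inverse $x^{-1} \in \mfU^+_A(\K)$ also lies in $\widetilde{\mathcal{U}}^+_\K$ and satisfies $x x^{-1} = 1$ in $\widehat{\mathcal{U}}^+_\K$. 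Composing with the extended $\Phi_\K$ produces the desired group homomorphism, with the displayed formula on simple root elements coming directly from the definition of $\exp(\lambda e_i)$.

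The main obstacle lies in the choice of $\widetilde{\mathcal{U}}^+_\K$: the completion $\widehat{\mathcal{U}}^+_\K$ contains all $\exp(\lambda e_i)$, but the target $B_\Z \otimes \K$ carries no natural topology with respect to which $\Phi_\K$ could be extended wholesale. Nilpotency of each $\phi(e_i)$ is precisely what makes the formal infinite sum $\sum_s \Phi(e_i^{(s)}) \otimes \lambda^s$ collapse to a finite one, and this finite-vanishing mechanism is the essential glue letting the Lie-algebraic data integrate to a group homomorphism on $\mfU^+_A(\K)$.
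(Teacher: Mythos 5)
Your proposal follows essentially the same route as the paper: the same definition of $\widetilde{\mathcal{U}}^+_\K$ by finite vanishing of graded components under $\Phi_\K$, the same verification of closure under multiplication via the graded Cauchy-product formula, the same use of nilpotency to place $\exp(\lambda e_i)$ in $\widetilde{\mathcal{U}}^+_\K$, and the same combination of \Cref{U+ generated by simple root subgroups} and \Cref{inclusion U+ in Uma+} for part (ii). The small remark you add about why $\mfU^+_A(\K) \subseteq (\widetilde{\mathcal{U}}^+_\K)^{\times}$ — that inverses in $\mfU^+_A(\K)$ are themselves products of $\exp(\lambda e_i)$'s and hence lie in the subalgebra — is a nice explicit observation that the paper leaves implicit, but it does not change the argument.
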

\begin{proof}
Let $\phi \colon \mfn^+ \to B$ be as in the statement, and consider the associated morphism of algebras $\Phi \colon \mathcal{U}_{\C}^+ \to B$. For $\K$ a field, define \[\Phi_{\K} \colon \mathcal{U}^+_{\K} \to B_{\Z} \otimes \K, \quad \textstyle\sum_{i} u_i \otimes \lambda_i \mapsto \textstyle\sum_{i}\Phi(u_i) \otimes \lambda_i.\]
Moreover, we define the following subset of $\widehat{\mathcal{U}}_{\K}^+$.
\[
\widetilde{\mathcal{U}}^+_{\K} := \left\{ u = \sum_{n \geq 0} u_n \in \widehat{\mathcal{U}}^+_{\K} \mid u_n \in \mathcal{U}^+_{n,\Z} \otimes \K, \quad \exists \ell_u \geq 0  \text{ such that } \Phi_{\K}(u_m) = 0, \: \forall m \geq \ell_u \right\}.
\]
Then $\widetilde{\mathcal{U}}^+_{\K}$ is a subalgebra of $\widehat{\mathcal{U}}^+_{\K}$. Indeed, it is trivially closed under sums by linearity of $\Phi_{\K}$. Moreover, products in $\widehat{\mathcal{U}}^+_{\K}$ are given by 
\[
(u \cdot v)_m =  \sum_{k = 0}^{m} u_k v_{m-k},
\]
for any $u, v  \in \widehat{\mathcal{U}}^+_{\K}$, where $x_m$ denotes the $m$-th homogeneous component of an element $x \in \widehat{\mathcal{U}}^+_{\K}$.
It follows that if $u , v\in \widetilde{\mathcal{U}}^+_{\K}$, with $\ell_u$ and $\ell_v$ such that $\Phi_{\K}(u_k) = 0$ for $k \geq \ell_u$ and $\Phi_{\K}(v_k) = 0$ for $k \geq \ell_v$, then 
\[\Phi_{\K}((u\cdot v)_m) = \sum_{k = 0}^m \Phi_{\K}(u_k) \Phi_{\K}(v_{m-k}) = 0, \text{ for all } m \geq \ell_u+\ell_v,\]
and hence $\widetilde{\mathcal{U}}^+_{\K}$ is closed under multiplication.
Now the map 
\[\widetilde{\Phi}_{\K} \colon \widetilde{\mathcal{U}}^+_{\K} \to B_{\Z} \otimes \K, \quad \sum_{n \geq 0} u_n \mapsto \sum_{n \geq 0} \Phi_{\K}(u_n)\]
is a well-defined ring homomorphism which restricts to $\Phi_{\K}$ on $\mathcal{U}_{\K}^+$.

Let $\K$ be a field with at least 4 elements. Then $\widetilde{\mathcal{U}}^+_{\K}$ is a subalgebra of $\widehat{\mathcal{U}}^+_{\K}$ containing $\mfU^+_A(\K)$ as a subgroup of its invertible elements. Indeed, the elements $\exp(\lambda e_i) = \sum_{n \geq 0} e_i^{(n)}\otimes \lambda^n$ belong to $\widetilde{\mathcal{U}}^+_{\K}$, since by assumption $\phi(e_i)$ is nilpotent. Since $A$ is $2$-spherical, together with  \Cref{U+ generated by simple root subgroups} and \Cref{inclusion U+ in Uma+} this implies that 
\[
\mfU^+_A(\K) \cong \left\langle \sum_{n \geq 0} e_i^{(n)} \otimes \lambda^n \mid i \in I, \lambda \in \K \right\rangle \leqslant \left(\widetilde{\mathcal{U}}^+_{\K}\right)^{\times}.
\]
The map $\Phi_{\K} \colon\widetilde{\mathcal{U}}^+_{\K} \to B_{\Z} \otimes \K$ then restricts to the desired group homomorphism on $\mfU^+_A(\K)$.
\end{proof}

From the Serre presentation of $\mfn^+_A(\C)$, as given in \cref{Serre presentation}, it follows that when $A$ is of type $\HB22$, the Lie algebra $\mfn_{A}^+(\C)$ has the presentation
\begin{equation}
\label{presentatie n HB22}
\mfn_A^+(\C) := \left\langle e_a, e_b , e_c \:\Bigg|  \begin{matrix}
\: \ad(e_a)^2(e_b) = \ad(e_b)^2(e_a) = 0 \\
\:  \ad(e_b)^3(e_c) = \ad(e_c)^{2}(e_b) \, = 0 \\
\: \ad(e_a)^3(e_c) = \ad(e_c)^{2}(e_a) = 0
\end{matrix}\right\rangle.
\end{equation}

Let $R := \C\langle X_a, X_b, X_c\rangle$ be the associative unital $\C$-algebra freely generated by the non-commuting variables $X_a, X_b, X_c$. Then $\Mat_{4\times 4}(R)$ is an associative unital $\C$-algebra.
\begin{lemma}
\label{is Lie alg homomorphism}
Let $\mfn^+ := \mfn_A^+(\C)$ be of type $\HB22$. Then the assignment
\begin{align*}
\varphi_{\HB22} \colon  \mfn^+ & \: \to \Mat_{4\times 4}\left(R \right) \\ 
e_a & \: \mapsto \left(\begin{matrix}
0 & 0 & 0 & X_a \\
0 & 0 & X_a & 0 \\
0 & 0 & 0 & 0 \\
0 & 0 & 0 & 0
\end{matrix}\right),\:  e_b \mapsto \left(\begin{matrix}
0 & 0 & 0 & 0 \\
X_b & 0 & 0 & 0 \\
0 & 0 & 0 & -X_b \\
0 & 0 & 0 & 0 
\end{matrix}\right), \: e_c \mapsto \left(\begin{matrix}
0 & 0 & 0 & 0 \\
0 & 0 & 0 & 0 \\
0 & 0 & 0 & 0 \\
0 & X_c & 0 & 0 
\end{matrix}\right),
\end{align*}
extends to a Lie algebra morphism.
\end{lemma}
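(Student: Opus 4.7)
The plan is to exploit the Serre presentation of $\mfn^+_A(\C)$ recorded in \cref{presentatie n HB22}: for $A$ of type $\HB22$, the algebra $\mfn^+_A(\C)$ is generated by $e_a, e_b, e_c$ subject to the six Serre relations
\[
\ad(e_a)^2(e_b) = \ad(e_b)^2(e_a) = \ad(e_b)^3(e_c) = \ad(e_c)^2(e_b) = \ad(e_a)^3(e_c) = \ad(e_c)^2(e_a) = 0.
\]
By the universal property of this presentation, to show that $\varphi_{\HB22}$ extends to a well-defined Lie algebra morphism it is enough to verify that the target elements $E_a := \varphi_{\HB22}(e_a)$, $E_b := \varphi_{\HB22}(e_b)$, $E_c := \varphi_{\HB22}(e_c) \in \Mat_{4\times 4}(R)$, taken with respect to the additive commutator bracket on $\Mat_{4\times 4}(R)$, satisfy these same six vanishing relations.

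The verification is a direct computation of iterated matrix commutators. The three matrices $E_a, E_b, E_c$ are highly sparse, with nonzero entries supported at only five of the sixteen matrix positions, namely $(1,4), (2,3), (2,1), (3,4)$ and $(4,2)$. This sparsity drives the whole argument: using the elementary rule $(PE_{s,t})\cdot(QE_{s',t'}) = \delta_{ts'}(PQ)E_{s,t'}$, each commutator $[E_i,E_j]$ reduces to a sum of at most two matrix units $E_{p,q}$ with scalar coefficients in $R$; iterating once or twice more either kills the current matrix outright because the relevant positions no longer overlap with the support of $E_i$, or deposits the result in a single new position which is then annihilated at the next step by the same positional argument. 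In each of the six cases, this procedure terminates in zero after exactly $-A_{ij}+1$ steps.

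The one subtlety is that $R = \C\langle X_a, X_b, X_c\rangle$ is non-commutative, so one must keep the factors $X_a, X_b, X_c$ in the correct order when accumulating coefficients; for instance $[E_a, E_b]$ turns out to be a scalar multiple of $(X_aX_b + X_bX_a)E_{2,4}$ rather than of $2X_aX_bE_{2,4}$. Far from being an obstacle, this is the whole point of working over $R$: verifying the Serre relations as identities in $\Mat_{4\times 4}(R)$ is precisely what will later allow the substitution $X_i \mapsto M_i \in \Mat_{n\times n}(\K)$ underlying the group homomorphism $\Phi_{M_a,M_b,M_c}$ of \Cref{group hom to SL4K}, where the $M_i$ need not commute. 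No step of the computation is more than routine bookkeeping.
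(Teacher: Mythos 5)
Your proposal is correct and follows the same route as the paper: appeal to the Serre presentation of $\mfn^+_{\HB22}(\C)$ and verify the six relations by direct commutator computation in $\Mat_{4\times 4}(R)$. The paper simply carries out two of the six checks (those involving $\ad(e_c)^2(e_a)$ and $\ad(e_a)^3(e_c)$) and asserts the rest are similar, whereas you describe the computational mechanism (sparsity, positional matching of matrix units, termination after $-A_{ij}+1$ applications of $\ad$) without writing any single verification out; either form is a complete proof, and your remark that $[E_a,E_b]$ produces $(X_aX_b+X_bX_a)E_{24}$ rather than $2X_aX_bE_{24}$ correctly flags the only real pitfall, the non-commutativity of $R$.
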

\begin{proof}
It suffices to verify that the relations from the presentation given by \cref{presentatie n HB22} are satisfied in the image of $\varphi_{\HB22}$. For example,
\begin{align*}
\ad\left(\varphi_{\HB22}(e_c)\right)^2\left(\varphi_{\HB22}(e_a)\right) &= \ad\left(\varphi_{\HB22}(e_c)\right)\left( [X_c E_{4,2}, X_a (E_{1,4} + E_{2,3}) ] \right) \\
& = \left[X_c E_{4,2}, X_c X_a E_{4,3}-X_a X_c E_{1,2} \right] = 0.
\end{align*}
Moreover, 
\begin{align*}
\ad\left(\varphi_{\HB22}(e_a)\right)^2\left(\varphi_{\HB22}(e_c)\right) & = [X_a (E_{1,4} + E_{2,3}), X_a X_c E_{1,2} - X_c X_a E_{4,3}] \\
& = - 2 X_a X_c X_a E_{1,3},
\end{align*}
which implies that
\[
\ad\left(\varphi_{\HB22}(e_a)\right)^3\left(\varphi_{\HB22}(e_c)\right) = [X_a (E_{1,4} + E_{2,3}), - 2 X_a X_c X_a E_{1,3}] = 0.
\]
The remaining relations follow from similar calculations.
\end{proof}

We are now able to prove the first part of \Cref{group hom to SL4K}.
\begin{corollary}
\label{group hom}
Let $\K$ be a field with $\abs{\K} \geq 4$ and let $n \geq 1$. Suppose  $M_a, M_b, M_c \in \Mat_{n \times n}(\K)$. For all $\lambda \in \K$, set
\begin{equation*}
V_a(\lambda)  := \left( \begin{matrix}
1 & 0 & 0 & \lambda M_a \\ 0 & 1 & \lambda M_a & 0 \\ 0 & 0 & 1 & 0 \\ 0 & 0 & 0 & 1
\end{matrix}\right) ,
V_b(\lambda) := \left( \begin{matrix}
1 & 0 & 0 & 0 \\ \lambda M_b & 1 & 0 & 0 \\ 0 & 0 & 1 & -\lambda M_b \\ 0 & 0 & 0 & 1
\end{matrix}\right),  
V_c(\lambda)  := \left( \begin{matrix}
1 & 0 & 0 & 0 \\ 0 & 1 & 0 & 0 \\ 0 & 0 & 1 & 0 \\ 0 & \lambda M_c & 0 & 1
\end{matrix} \right).
\end{equation*}
Then the assignment \[ x_a(\lambda) \mapsto V_a(\lambda), \quad x_b(\lambda) \mapsto V_b(\lambda), \quad x_c(\lambda) \mapsto V_c(\lambda), \]
induces a group homomorphism
\begin{equation*}
\Phi_{M_a,M_b,M_c} \colon  \mfU^+_{\HB22}(\K) \to   \SL_{4n}(\K).
\end{equation*}
\end{corollary}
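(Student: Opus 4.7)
The plan is to realise $\Phi_{M_a,M_b,M_c}$ as a two-step construction: first integrate the ``universal'' matrix-valued Lie algebra morphism of \Cref{is Lie alg homomorphism} to a group homomorphism via \Cref{prop:alg morphism lifts to group morphism}, then specialise the formal variables $X_a, X_b, X_c$ of the free algebra $R = \C\langle X_a, X_b, X_c\rangle$ to the matrices $M_a, M_b, M_c$. This bypasses any direct check of defining relations of $\mfU^+_{\HB22}(\K)$ against the explicit matrices $V_i(\lambda)$.

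First I would apply \Cref{prop:alg morphism lifts to group morphism} to $\varphi_{\HB22}\colon \mfn^+ \to B := \Mat_{4\times 4}(R)$ with the obvious $\Z$-form $B_\Z := \Mat_{4\times 4}(\Z\langle X_a,X_b,X_c\rangle)$. The crucial input is nilpotency: a direct calculation using $E_{i,j}E_{k,\ell} = \delta_{j,k}E_{i,\ell}$ shows that $\varphi_{\HB22}(e_i)^2 = 0$ for each $i \in \{a,b,c\}$, which simultaneously verifies the nilpotency hypothesis of the proposition and forces the divided powers $\varphi_{\HB22}(e_i)^{(s)}$ to vanish for all $s \geq 2$, so that the image of $\mathcal{U}^+_{\Z}$ under the associated algebra morphism lies in $B_\Z$. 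The proposition then yields, for any field $\K$ with $\abs{\K} \geq 4$, a group homomorphism
\[\widetilde{\Phi}_\K\colon \mfU^+_{\HB22}(\K) \to (B_\Z \otimes \K)^\times, \qquad x_{\alpha_i}(\lambda) \mapsto I + \lambda\cdot \varphi_{\HB22}(e_i),\]
where the exponential series defining the image truncates at the linear term since $\varphi_{\HB22}(e_i)$ is square-zero.

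The remaining step is specialisation. The $\K$-algebra morphism $R_\Z \otimes \K \to \Mat_{n\times n}(\K)$ defined by $X_i \mapsto M_i$ induces, after applying $\Mat_{4\times 4}(\cdot)$ and the canonical isomorphism $\Mat_{4\times 4}(\Mat_{n\times n}(\K)) \cong \Mat_{4n \times 4n}(\K)$, a ring morphism $\mathrm{ev}\colon \Mat_{4\times 4}(R_\Z\otimes \K) \to \Mat_{4n\times 4n}(\K)$. Unpacking the block structure, one reads off that $\mathrm{ev}$ sends $I + \lambda \cdot \varphi_{\HB22}(e_i)$ to precisely the matrix $V_i(\lambda)$ prescribed in the statement, so the composition $\mathrm{ev} \circ \widetilde{\Phi}_\K$ is the desired homomorphism $\Phi_{M_a,M_b,M_c}\colon \mfU^+_{\HB22}(\K) \to \GL_{4n}(\K)$.

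Finally, each $V_i(\lambda) = I + N_i(\lambda)$ satisfies $N_i(\lambda)^2 = 0$ by the same square-zero calculation, hence is unipotent with determinant $1$; by \Cref{U+ generated by simple root subgroups} the image of $\Phi_{M_a,M_b,M_c}$ is generated by such matrices and therefore lies in $\SL_{4n}(\K)$. I do not anticipate any substantive obstacle beyond the bookkeeping of the block structure, since the technical work is already encapsulated in \Cref{prop:alg morphism lifts to group morphism} and \Cref{is Lie alg homomorphism}; the only point requiring care is tracking the identification $\Mat_{4\times 4}(\Mat_{n\times n}(\K)) \cong \Mat_{4n\times 4n}(\K)$ so that the block entries match the displayed matrices $V_a(\lambda), V_b(\lambda), V_c(\lambda)$.
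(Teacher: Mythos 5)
Your proposal is correct and follows essentially the same route as the paper's own proof: both apply \Cref{prop:alg morphism lifts to group morphism} to the Lie algebra morphism of \Cref{is Lie alg homomorphism} with the $\Z$-form $B_\Z = \Mat_{4\times 4}(\Z\langle X_a,X_b,X_c\rangle)$, verify nilpotency via $\varphi_{\HB22}(e_i)^2=0$, and then specialise $X_i \mapsto M_i$. Your closing step (deducing the image lies in $\SL_{4n}(\K)$ from unipotence of the generators together with \Cref{U+ generated by simple root subgroups}) is if anything slightly more careful than the paper, which somewhat informally declares the specialisation map $\psi_{M_a,M_b,M_c}$ to land directly in $\SL_{4n}(\K)$.
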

\begin{proof}
Define $B := \Mat_{4 \times 4}(\C\langle X_a, X_b, X_c \rangle )$. Let $\varphi_{\HB22} \colon \mfn^+ \to B$ be as in \Cref{is Lie alg homomorphism}. Then for $i \in \{a, b , c\}$, 
\[\varphi_{\HB22}(e_i)^2 = 0.\]
Consider the $\Z$-form of $B$ given by $B_{\Z}:=\Mat_{4 \times 4}(\Z\langle X_a, X_b, X_c\rangle )$. Note that for the associated homomorphism of associative algebras $\Phi \colon \mathcal{U}^+_{\C} \to B$,
\[\Phi(\mathcal{U}^+_{\Z}) \subseteq  B_{\Z},\]
since $\Phi(e_i) \in B_{\Z}$ and
\[\Phi(e_i^{(s)}) = \frac{1}{s!} \Phi(e_i)^s = 0 \text{ for all } s >1,\]
for all $i \in \{a,b,c\}$. Thus, by \Cref{prop:alg morphism lifts to group morphism}, for any field $\K$ with at least $4$ elements, there is an associated group homomorphism 
\[\Phi_{\K} \colon \mfU^+_{\HB22}(\K) \to \GL_4(\K\langle X_a, X_b, X_c\rangle),\]
where for each $i \in \{a,b,c\}$, the element $x_i(\lambda) \in \mfU_{i}(\K)$ is mapped to 
\[
\Phi_{\K}(x_i(\lambda)) = (I_4 \otimes 1) + (\phi(e_i) \otimes \lambda). 
\]
For any $M_a, M_b, M_c \in \Mat_{n \times n}(\K)$, and any $\lambda \in \K$, the matrices 
\begin{equation*}
\left( \begin{matrix}
1 & 0 & 0 & \lambda M_a \\ 0 & 1 & \lambda M_a & 0 \\ 0 & 0 & 1 & 0 \\ 0 & 0 & 0 & 1
\end{matrix}\right) ,\quad 
\:\left( \begin{matrix}
1 & 0 & 0 & 0 \\ \lambda M_b & 1 & 0 & 0 \\ 0 & 0 & 1 & -\lambda M_b \\ 0 & 0 & 0 & 1
\end{matrix}\right),  \quad 
\: \left( \begin{matrix}
1 & 0 & 0 & 0 \\ 0 & 1 & 0 & 0 \\ 0 & 0 & 1 & 0 \\ 0 & \lambda M_c & 0 & 1
\end{matrix} \right), 
\end{equation*}
have determinant $1$, and it follows that there is a well-defined group homomorphism
\[
\psi_{M_a,M_b,M_c} \colon \GL_4(\K \langle X_a, X_b, X_c \rangle) \to \SL_{4n}(\K)
\]
defined by mapping $X_i$ to $M_i$ for every $i \in \{a,b,c\}$.  Hence we obtain a group homomorphism
\[\Phi_{M_a,M_b,M_c} \colon \begin{tikzcd}
\mfU^+_{\HB22}(\K) \arrow[r,"\Phi_{\K}"] & \GL_4(\K\langle X_a, X_b, X_c \rangle) \arrow[r,"\psi_{M_a,M_b,M_c}" yshift = 4pt] & \SL_{4n}(\K).
\end{tikzcd} \qedhere\]
\end{proof}

\section{Quotients of \texorpdfstring{$\mfU^+_{\HB22}(\F_q)$}{U+} isomorphic to \texorpdfstring{$\SL_n(\F_q)$}{SLn(Fq)}}
\label{sect: SL4K}
In this section, we construct matrices $M_a, M_b, M_c \in \GL_k(\F_q)$ such that $\Phi_{M_a,M_b,M_c}$ has image $\SL_{4k}(\F_q)$. We achieve this by identifying embedded copies of $\SL_2(\F_{q^k})$ in the image, and applying \Cref{refinement Li}. In the case (iii) of \Cref{refinement Li}, it will be crucial to identify precisely the matrix $\Omega_{B_{f,\phi_x}}$ which defines the copy of $\Sp_{2k}(\F_q)$. To this end, we cite the following result.

\begin{lemma}[{\cite[Proposition 5.1]{Bayer-F}}]
\label{existence self dual basis}
For any Galois extension ${\mathbb{L}}/{\K}$ of odd degree $m$ and characteristic not $2$, there exists a $\K$-basis $\{b_1, \ldots, b_m\}$ of $\mathbb{L}$ such that
\[\Tr_{{\mathbb{L}}/{\K}}(b_ib_j) = \delta_{i,j}.\]
\end{lemma}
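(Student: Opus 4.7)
The plan is to reinterpret the sought-after basis as an orthonormal basis for the non-degenerate symmetric $\K$-bilinear trace form
\[T \colon \mathbb{L} \times \mathbb{L} \to \K, \quad (x,y) \mapsto \Tr_{\mathbb{L}/\K}(xy).\]
Non-degeneracy of $T$ is automatic since $\mathbb{L}/\K$ is separable (being Galois). Finding $\{b_1, \ldots, b_m\}$ with $\Tr_{\mathbb{L}/\K}(b_i b_j) = \delta_{ij}$ is then equivalent to showing that the quadratic form $q(x) := \Tr_{\mathbb{L}/\K}(x^2)$ is isometric over $\K$ to the unit form $\langle 1, \ldots, 1\rangle$.

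First, I would diagonalise $q$ via Gram--Schmidt, obtaining $q \cong \langle d_1, \ldots, d_m\rangle$ for some $d_i \in \K^\times$. In the case $\K = \F_q$ with $q$ odd (the only case needed for the main theorem of the paper), non-degenerate quadratic forms of fixed rank over $\F_q$ are classified up to isometry by their discriminant modulo squares in $\K^\times$, so it suffices to show that $d_1 \cdots d_m$ is a square.

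To compute this discriminant, I would choose a primitive element $\theta$ for $\mathbb{L}/\K$ with minimal polynomial $f \in \K[x]$ and conjugates $\theta_1, \ldots, \theta_m$ in an algebraic closure. In the power basis $\{1, \theta, \ldots, \theta^{m-1}\}$ the Gram matrix of $T$ has entries $\Tr_{\mathbb{L}/\K}(\theta^{i+j})$, and a classical computation identifies its determinant with $\mathrm{disc}(f) = V^2$, where $V := \prod_{i<j}(\theta_i - \theta_j)$ is the Vandermonde. Now $G := \Gal(\mathbb{L}/\K)$ permutes the $\theta_i$, yielding a composite homomorphism $G \to \Sym_m \to \{\pm 1\}$, which must be trivial since $|G| = m$ is odd. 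Hence every $\sigma \in G$ acts on $\{\theta_1, \ldots, \theta_m\}$ by an even permutation, so $V$ is $G$-fixed and therefore $V \in \K$, giving $\mathrm{disc}(f) = V^2 \in (\K^\times)^2$. Combined with the previous paragraph this produces the orthonormal basis.

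The hard part is the discriminant computation; the odd-parity argument above handles it cleanly in the Galois setting. For an arbitrary field of characteristic $\neq 2$ beyond the finite-field case, classification by the discriminant alone is insufficient, and one additionally needs the Hasse--Witt invariant of $q$ to vanish; this is where Bayer-Fluckiger's argument in the cited reference becomes heavier, ultimately invoking Serre's formula linking the Hasse--Witt invariant of the trace form to the second Stiefel--Whitney class of the Galois representation, which again vanishes when $|G|$ is odd.
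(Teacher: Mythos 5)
The paper supplies no proof here: Lemma~\ref{existence self dual basis} is cited verbatim from Bayer--Fluckiger, so you are giving an argument where the paper gives only a reference. Your argument for $\K = \F_q$ with $q$ odd is correct and self-contained. Non-degeneracy of $T(x,y) = \Tr_{\mathbb{L}/\K}(xy)$ follows from separability; Gram--Schmidt diagonalises it in characteristic $\neq 2$; the determinant of the Gram matrix in the power basis is $\mathrm{disc}(f)$, and, since a homomorphism from the odd-order group $\Gal(\mathbb{L}/\K)$ to $\{\pm 1\}$ is trivial, the Vandermonde $V$ is $G$-fixed, so $V \in \K$ and $\mathrm{disc}(f) = V^2$ is a square. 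As rank and discriminant class are a complete set of invariants for non-degenerate quadratic forms over $\F_q$ ($q$ odd), this pins the trace form down as $\langle 1, \ldots, 1\rangle$, and polarising the isometry hands you the orthonormal basis. Since the paper only invokes the lemma for $\F_{q^k}/\F_q$, your elementary argument actually covers every use of the lemma in the paper and could replace the citation there.

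One imprecision in your closing remark on the general case: over an arbitrary field of characteristic $\neq 2$, discriminant plus Hasse--Witt invariant still do \emph{not} classify non-degenerate quadratic forms (already over $\R$ the signature is an independent invariant, and in general there are higher cohomological invariants attached to the fundamental filtration of the Witt ring). So the discriminant computation plus vanishing of the Hasse--Witt invariant via Serre's formula would not, by themselves, finish the proof for all base fields, and Bayer--Fluckiger's argument is necessarily doing more than verifying those two invariants. Since you explicitly defer to the cited reference outside the finite-field case, this is a matter of phrasing rather than a gap in what you actually prove, but as written it overstates what the two invariants buy you.
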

Such a basis is called a \emph{self-dual basis}. Moreover, in the case at hand, namely extensions of finite fields of odd degree, it is always possible to choose a self-dual basis in such a way that it is generated by the conjugates of a single element under the Frobenius automorphism. A basis generated by a single element in this way is called a \emph{normal basis}.
\begin{lemma}[{\cite[Theorem 1]{LempelWeinberger}}]
\label{lempelwein}
Let $q$ be a prime power (not necessarily odd). Let $k$ be odd. Then there exists an element $b \in \F_{q^k}$ such that 
\[ \{ b, b^q, b^{q^2}, \ldots, b^{q^{k-1}}\}\] is an $\F_q$-basis of $\F_{q^k}$, and such that 
$\Tr_{{\F_{q^k}}/{\F_q}}(b^{q^{i}+ q^j}) = \delta_{i,j}$.
\end{lemma}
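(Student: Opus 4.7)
The plan is to reduce the existence of a self-dual normal element to a solvability question in the group algebra $R := \F_q[x]/(x^k - 1)$, and then handle that question via a Chinese Remainder decomposition, with the parity hypothesis $k$ odd playing the decisive role.

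First, I would apply the Normal Basis Theorem to fix some normal element $a \in \F_{q^k}$. This identifies $\F_{q^k}$ with $R$ as an $R$-module via $\sigma \leftrightarrow x$, where $\sigma = (\cdot)^q$ denotes the Frobenius. Seek the desired $b$ in the form $b = f(\sigma)(a)$ for some unit $f(x) \in R$. Introduce the \emph{trace polynomial} $c(x) := \sum_{\ell = 0}^{k-1} \Tr(a \cdot a^{q^\ell})\, x^\ell \in R$; using $\sigma$-invariance of the trace one checks that $c(x) = c(x^{-1})$ in $R$. A routine computation with the change-of-basis matrix then shows that the Gram matrix of the trace form in the basis $\{\sigma^i(b)\}$ corresponds, under the identification of circulant matrices with elements of $R$, to the product $f(x)\, f(x^{-1})\, c(x) \in R$. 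Hence the self-duality condition $\Tr(b^{q^i + q^j}) = \delta_{i,j}$ becomes the single equation
\[
f(x)\, f(x^{-1})\, c(x) \equiv 1 \pmod{x^k - 1}.
\]

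Next, I would decompose $R \cong \prod_j R_j$ via CRT along the factorization of $x^k - 1$ over $\F_q$, and analyse the involution $\iota \colon x \mapsto x^{-1}$ on the factors. Since $k$ is odd, $x+1 \nmid x^k - 1$, so the only factor on which $\iota$ acts trivially is $R_0 \cong \F_q$ (the factor corresponding to $x-1$). On each pair of factors swapped by $\iota$, the equation is trivially solvable by setting $f_j = 1$ and $f_{j'} = c_{j'}^{-1}$ (note that $c(x)$ is a unit in $R$ because the trace form is nondegenerate). On each nontrivially $\iota$-fixed factor $R_j$, the equation reduces to a norm equation $N_{R_j / R_j^+}(f_j) = c_j^{-1}$ for a quadratic extension $R_j / R_j^+$ of finite fields, which is solvable because norm maps between finite fields are surjective; the right-hand side indeed lies in $R_j^+$ thanks to $c(x) = c(x^{-1})$.

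The main obstacle is the trivial factor $R_0 = \F_q$, where the equation collapses to $f(1)^2 = c(1)^{-1}$ and requires $c(1)$ to be a square. Fortunately,
\[
c(1) = \sum_\ell \Tr(a \cdot a^{q^\ell}) = \Tr\bigl(a \cdot \Tr(a)\bigr) = \Tr(a)^2
\]
(using that $\Tr(a) \in \F_q$ is a scalar), so $c(1)$ is automatically a square and the equation is solvable on $R_0$ too. Assembling the local solutions via CRT yields a unit $f \in R$, and $b := f(\sigma)(a)$ is the desired self-dual normal element. The parity assumption enters exactly once, to rule out a second $\iota$-fixed factor (the one that would correspond to $x+1$), where the square-ness obstruction could genuinely fail.
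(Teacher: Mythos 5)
The paper invokes this statement as a citation to Lempel--Weinberger and does not reproduce a proof, so there is no in-paper argument to compare against; what follows is an assessment of your argument on its own terms. Your proof is correct, and it is essentially the circulant-algebra argument underlying the cited reference. The reformulation is right: fixing a normal element $a$ and identifying $\F_{q^k}$ with the free rank-one module over $R=\F_q[x]/(x^k-1)$, the Gram matrix of the trace form in the normal basis generated by $b=f(\sigma)(a)$ is circulant with symbol $f(x)f(x^{-1})c(x)$, so self-duality is exactly $f(x)f(x^{-1})c(x)\equiv 1$ in $R$. The CRT analysis is also sound. A maximal ideal $(p_j)$ of $R$ is pointwise fixed by $\iota\colon x\mapsto x^{-1}$ iff $p_j\mid\gcd(x^k-1,x^2-1)=x^{\gcd(k,2)}-1$, which for $k$ odd leaves only $p_j=x-1$; on that factor the equation becomes $f(1)^2 c(1)=1$, and your identity $c(1)=\Tr(a)^2$ (nonzero because $a$ is normal, equivalently because $c$ is a unit) shows it is solvable. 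On a self-reciprocal factor $p_j\ne x-1$ the induced automorphism $\bar\iota$ of $R_j$ is a nontrivial involution fixing $\F_q$, and surjectivity of the norm for finite fields solves $N_{R_j/R_j^+}(f_j)=c_j^{-1}$, the right-hand side lying in $R_j^+$ since $\iota(c)=c$. On swapped pairs the choice $f_j=1$, $f_{j'}=c_{j'}^{-1}$ works precisely because $\iota(c)=c$. The assembled $f$ is automatically a unit (as $c$ is and $f(x)f(x^{-1})$ must be), so $b=f(\sigma)(a)$ is again normal. One small wording nit: for general odd $k$ there typically are several $\iota$-invariant factors (all self-reciprocal irreducible divisors of $x^k-1$), so "a second $\iota$-fixed factor" should be read as "a second factor on which $\iota$ acts \emph{trivially}" --- that is the case the parity assumption genuinely excludes, and you do treat the nontrivially fixed ones separately, so the argument itself is unaffected.
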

The proof of the above result is constructive, and the authors of \cite{ArnaultF} have implemented the resulting algorithm in Magma, as well as other algorithms constructing bases of the same type.

The reason why self-dual bases are of particular interest for our purposes, is that they allow to identify embeddings of $\SL_2(\F_{q^k})$ in the standard symplectic group $\Sp_{2k}(\F_q, \Omega_k)$.
\begin{lemma}
A self-dual basis $\mathcal{B}$ of a field extension ${\mathbb{L}}/{\K}$ of degree $k$ defines an embedding of $\SL_2(\mathbb{L})$ in the standard symplectic group $\Sp_{2k}(\K,\Omega_k)$. 
\end{lemma}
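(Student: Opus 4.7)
The plan is to use the explicit construction of $B_{f,\sigma}$ from \cref{definitie B_f} with $\sigma = \Tr_{\mathbb{L}/\K}$, and exploit the self-duality of $\mathcal{B}$ to show that the resulting Gram matrix is exactly $\Omega_k$.

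First I would recall that since $\SL_2(\mathbb{L}) = \Sp_2(\mathbb{L},\Omega_1)$, every element $g \in \SL_2(\mathbb{L})$ preserves the standard $\mathbb{L}$-symplectic form $f \colon \mathbb{L}^2 \times \mathbb{L}^2 \to \mathbb{L}$ defined by $\Omega_f = \Omega_1$. The basis $\mathcal{B} = \{b_1,\ldots,b_k\}$ determines, via the isomorphism $\Psi \colon \K^k \to \mathbb{L}$, an embedding $\SL_2(\mathbb{L}) \into \SL_{2k}(\K)$ as described before the statement. By the very definition of $B_{f,\sigma}$ in \cref{definitie B_f}, if $g$ preserves $f$, then its image under this embedding preserves $B_{f,\sigma}$ for every $\K$-linear functional $\sigma \colon \mathbb{L} \to \K$. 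In particular, taking $\sigma := \Tr_{\mathbb{L}/\K}$, the image of $\SL_2(\mathbb{L})$ is contained in $\Sp_{2k}(\K,\Omega_{B_{f,\Tr}})$.

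It then remains to compute $\Omega_{B_{f,\Tr}}$ explicitly and show it equals $\Omega_k$. Let $\{e_1,e_2\}$ be the standard $\mathbb{L}$-basis of $\mathbb{L}^2$, and order the $\K$-basis of $\K^{2k}$ as $b_1 e_1, \ldots, b_k e_1, b_1 e_2, \ldots, b_k e_2$. From $\Omega_f = \Omega_1$, one reads off $f(b_i e_1, b_j e_1) = f(b_i e_2, b_j e_2) = 0$, $f(b_i e_1, b_j e_2) = -b_i b_j$, and $f(b_i e_2, b_j e_1) = b_i b_j$. Applying $\Tr_{\mathbb{L}/\K}$ and using the defining property of a self-dual basis $\Tr_{\mathbb{L}/\K}(b_i b_j) = \delta_{i,j}$, the Gram matrix becomes the block matrix
\[
\Omega_{B_{f,\Tr}} = \begin{pmatrix} 0 & -I_k \\ I_k & 0 \end{pmatrix} = \Omega_k,
\]
which is exactly the matrix of the standard symplectic form on $\K^{2k}$.

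The main potential obstacle is purely bookkeeping: matching the ordering convention on the basis $\{b_i e_j\}$ of $\K^{2k}$ so that the resulting matrix is literally $\Omega_k$ and not some conjugate by a permutation matrix. Once the ordering is fixed as above, self-duality of $\mathcal{B}$ is precisely the condition that makes the off-diagonal blocks equal to $\pm I_k$ rather than an arbitrary invertible symmetric matrix, and this is what singles out the \emph{standard} symplectic group among the symplectic groups produced by \Cref{refinement Li}(iii).
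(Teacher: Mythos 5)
Your proof is correct, and it takes a genuinely different route from the paper's. The paper proves the lemma by showing that, under self-duality, the block-embedding $\tau\colon\SL_2(\mathbb{L})\to\SL_{2k}(\K)$ commutes with transpose (because each multiplication matrix $[\mu_\lambda]_{\mathcal{B}} = (\Tr(\lambda b_ib_j))_{i,j}$ is symmetric), and then concludes directly from $\tau(X)^t\Omega_k\tau(X) = \tau(X^t\Omega_1 X) = \tau(\Omega_1) = \Omega_k$. You instead work inside the $B_{f,\sigma}$ framework from the preliminaries: you observe that the image of $\SL_2(\mathbb{L})$ automatically preserves $B_{f,\sigma}$ for any $\K$-linear $\sigma$, and then you compute the Gram matrix $\Omega_{B_{f,\Tr}}$ with respect to the ordered $\K$-basis $\{b_ie_1,b_ie_2\}$, using $\Tr(b_ib_j)=\delta_{i,j}$ to identify it as exactly $\Omega_k$. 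Both proofs rest on the same single use of self-duality, but the mechanisms differ: the paper's is a slick algebraic identity exploiting multiplicativity and the transpose-equivariance of $\tau$; yours is an explicit Gram-matrix calculation that moreover pins down which $\sigma\in\Hom_\K(\mathbb{L},\K)$ in the sense of \cref{definitie B_f} yields the standard form (namely $\sigma=\Tr=\phi_1$), which makes the connection to \Cref{refinement Li}(iii) more transparent. Your approach buys clarity about where the result sits in the paper's $B_{f,\sigma}$ machinery; the paper's buys brevity and avoids any basis-ordering bookkeeping.
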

\begin{proof}
 Let
\[\tau \colon \SL_2(\mathbb{L}) \to \SL_{2k}(\K), \quad \left(\begin{matrix}
w & x \\ y & z
\end{matrix}\right) \mapsto \left(\begin{matrix}
[\mu_w]_{\mathcal{B}} & [\mu_{x}]_{\mathcal{B}} \\
[\mu_y]_{\mathcal{B}} & [\mu_z]_{\mathcal{B}}
\end{matrix}\right).\]
Since $\mathcal{B}$ is an orthonormal basis for the trace form $\Tr := \Tr_{{\mathbb{L}}/{\K}}$, for any $\lambda \in \mathbb{L}$, the matrix $[\mu_{\lambda}]_{\mathcal{B}}$ is given by $(\Tr(\lambda b_ib_j))_{1 \leq i,j \leq k}$. This matrix is symmetric. Then
\begin{align*}
\tau(\left(\begin{matrix}
w & x \\ y & z
\end{matrix}\right))^t & = \left(\begin{matrix}
[\mu_w]_{\mathcal{B}} & [\mu_x]_{\mathcal{B}}\\ [\mu_y]_{\mathcal{B}} & [\mu_z]_{\mathcal{B}} 
\end{matrix}\right)^t = \left(\begin{matrix}
[\mu_w]_{\mathcal{B}}^t & [\mu_y]_{\mathcal{B}}^t \\ [\mu_x]_{\mathcal{B}}^t & [\mu_z]_{\mathcal{B}}^t
\end{matrix}\right) = \left(\begin{matrix}
[\mu_w]_{\mathcal{B}} & [\mu_y]_{\mathcal{B}}\\ [\mu_x]_{\mathcal{B}} & [\mu_z]_{\mathcal{B}} 
\end{matrix}\right) = \tau(\left(\begin{matrix}
w & x \\ y & z
\end{matrix}\right)^t ).
\end{align*}
But for any matrix $ X \in \SL_2(\mathbb{L})$, one has $X^t \Omega_1 X = \Omega_1$, which implies by the above that
\[\tau(X)^t \Omega_k\tau(X) = \tau(X)^t \tau(\Omega_1) \tau(X) = \tau(X^t \Omega_1 X) = \tau(\Omega_1) = \Omega_k.\qedhere\]
\end{proof}
Since all $\Sp_{2k}(\K, \Omega_f)$ are isomorphic, it follows in particular that every $\Sp_{2k}(\K,\Omega_f)$ contains an isomorphic copy of $\SL_{2}(\mathbb{L})$, with $\abs{\mathbb{L} : \K} = k$, whenever a self-dual $\K$-basis of $\mathbb{L}$ exists. In particular this always holds for field extensions $\mathbb{L}/\K = {\F_{q^k}}/{\F_q}$ with $k$ odd, by \Cref{lempelwein}.

Since we will be constructing subgroups of  $\Ima(\Phi_{M_a,M_b,M_c})$ generated by matrices of order $p$, the following result is essential.

\begin{proposition}
\label{Li p-generated}
Let $p > 2$ and $k$ be distinct primes, and let $q = p^r$ for some $r \geq 1$. Let $X \leqslant \SL_{2k}(\F_q)$. Suppose $\Sp_{2k}(\F_q,\Omega_f ) \< X$ for some symplectic form $f \colon \F_q^{2k} \times \F_q^{2k} \to \F_q$. Suppose moreover that $X$ is generated by elements of order a power of $p$. Then $X = \Sp_{2k}(\F_q, \Omega_f)$.
\end{proposition}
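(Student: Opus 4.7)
Write $S := \Sp_{2k}(\F_q, \Omega_f)$. The plan is to embed the quotient $X/S$ into a group of order coprime to $p$ and then use the $p$-generation hypothesis to force it to be trivial.

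First, since $S \< X$, every element of $X$ normalises $S$, so $X \leqslant N_{\SL_{2k}(\F_q)}(S)$. The key input is the classical description of the normaliser of a symplectic group in the general linear group: $N_{\GL_{2k}(\F_q)}(S)$ is precisely the group $\mathrm{GSp}_{2k}(\F_q, \Omega_f)$ of \emph{symplectic similitudes}, i.e.\ the matrices $g \in \GL_{2k}(\F_q)$ satisfying $g^t \Omega_f g = \mu(g) \Omega_f$ for some scalar $\mu(g) \in \F_q^{\times}$. The multiplier map $\mu \colon \mathrm{GSp}_{2k}(\F_q,\Omega_f) \to \F_q^{\times}$ is a surjective homomorphism with kernel $S$, so we have a short exact sequence
\[
1 \to S \to \mathrm{GSp}_{2k}(\F_q, \Omega_f) \xrightarrow{\mu} \F_q^{\times} \to 1.
\]

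Restricting to $N_{\SL_{2k}(\F_q)}(S) = \mathrm{GSp}_{2k}(\F_q,\Omega_f) \cap \SL_{2k}(\F_q)$ and quotienting by $S$, we obtain an injection $N_{\SL_{2k}(\F_q)}(S)/S \hookrightarrow \F_q^{\times}$. In particular, $X/S$ embeds into $\F_q^{\times}$, which has order $q - 1 = p^r - 1$ coprime to $p$. Hence $X/S$ is a $p'$-group.

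Finally, by hypothesis $X$ is generated by elements of order a power of $p$. The image in $X/S$ of such a generator has order dividing both $|X/S|$ and a power of $p$, hence is trivial. It follows that $X/S$ is trivial, i.e.\ $X = S$, as desired. The only non-routine ingredient is the identification of $N_{\GL_{2k}(\F_q)}(S)$ with $\mathrm{GSp}_{2k}(\F_q,\Omega_f)$; this is standard and could be cited from, e.g., a reference on the geometry of classical groups, or deduced directly by noting that conjugation by $g$ preserves the set of symplectic transvections $\{x \mapsto x + f(v,x)v \mid v \in \F_q^{2k}\}$ only if $g$ preserves $f$ up to a scalar.
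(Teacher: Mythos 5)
Your proof is correct, and it takes a genuinely different — and cleaner — route than the paper's. The paper considers the conjugation action of $X$ on $K := \Sp_{2k}(\F_q, \Omega_f)$, uses the fact that $\lvert\Out(K)\rvert$ is coprime to $p$ to conclude $X = K \cdot C_X(K)$, and then bounds $C_X(K)$ by locating it inside $N_{\GL_{2k}(\F_q)}(N)$ for a subgroup $N \cong \SL_2(\F_{q^k})$, invoking the paper's Proposition on normalisers of $\SL_2(\F_{q^k})$. You instead bound $X$ directly inside $N_{\SL_{2k}(\F_q)}(S)$ and identify the latter with $\mathrm{GSp}_{2k}(\F_q,\Omega_f) \cap \SL_{2k}(\F_q)$; the multiplier homomorphism then shows at once that $X/S$ embeds in $\F_q^{\times}$, a $p'$-group, which finishes the job.

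Your argument buys several things. First, it avoids any appeal to the structure of $\Out(\Sp_{2k}(\F_q))$; the paper asserts $\lvert\Out(K)\rvert = 2k$, which appears to be a slip for $2r$ (diagonal automorphisms of order $\gcd(2,q-1)=2$ times field automorphisms of order $r$, where $q = p^r$), and coprimality of $r$ to $p$ is not among the stated hypotheses — your route sidesteps this entirely. Second, you never invoke the auxiliary subgroup $N \cong \SL_2(\F_{q^k})$ nor the normaliser description for it, so the hypotheses that $k$ is prime and $k \neq p$ play no role in your proof; you only use that $p \nmid q-1$. The one nontrivial input you need — that $N_{\GL_{2k}(\F_q)}(\Sp_{2k}(\F_q,\Omega_f))$ consists exactly of the symplectic similitudes — is indeed classical; the cleanest justification is that the natural module of $\Sp_{2k}(\F_q)$ is absolutely irreducible, so by Schur's lemma the space of $\Sp(V,f)$-invariant bilinear forms on $V$ is one-dimensional, and if $g$ normalises $\Sp(V,f)$ then $f(g\cdot, g\cdot)$ is also $\Sp(V,f)$-invariant, hence a scalar multiple of $f$. (This can also be found, e.g., in the Kleidman--Liebeck treatment of classical groups.) You should also note, as a small point of rigour, that the multiplier map $\mu$ is surjective onto $\F_q^\times$ — for the standard form this is witnessed by $\diag(\lambda I_k, I_k)$ — although for your conclusion it only matters that its target is a $p'$-group.
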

\begin{proof}
Suppose $X$ contains $K := \Sp_{2k}(\F_q,\Omega_{f})$ as a normal subgroup and $X = \langle x_1,\ldots, x_n \rangle$ such that each $x_i$ is of order $p^{\ell_i}$, for $\ell_i \geq 1$. Then $K$ (and hence $X$) contains a subgroup $N \leqslant K$ such that $N \cong \SL_2(\F_{q^k})$.

Now $X$ acts by conjugation on $K$, and in particular there is a group homomorphism $X \to \Out(K)$, with kernel $K \cdot  C_X(K)$. The group of outer automorphisms of a finite Chevalley group $\Sp_{2k}(\F_q)$ is generated by a group of \emph{diagonal automorphisms} and a group of \emph{field automorphisms}, see \cite[Section 12.2 \& Theorem 12.5.1]{Carter}. In particular, $\abs{\Out(K)} = 2k$.  Since $2k$ is coprime to $p$ by assumption, and $X$ is generated by elements of order a power of $p$, we conclude that $X = K \cdot C_X(K)$.

If an element of $\SL_{2k}(\F_q)$ centralises $K$, it also centralises $N$, and thus 
\begin{equation}
\label{centra K}
C_X(K) \leqslant C_{\SL_{2k}(\F_q)}(K) \leqslant C_{\SL_{2k}(\F_q)}(N) \leqslant  C_{\GL_{2k}(\F_q)}(N) \leqslant N_{\GL_{2k}(\F_q)}(N).
\end{equation}
By \Cref{normaliser of SL2}, the latter normaliser is isomorphic to $\GL_{2}(\F_{q^k}) \rtimes \Gal({\F_{q^k}}/{\F_q})$, and under this isomorphism, the inclusion of $N$ in $N_{\GL_{2k}(\F_q)}(N)$ coincides with the natural inclusion of $\SL_2(\F_{q^k})$ in $\GL_{2}(\F_{q^k}) \rtimes \Gal({\F_{q^k}}/{\F_q})$. It follows that $C_{\GL_{2k}(\F_q)}(N) \cong \mathcal{Z}(\GL_2(\F_{q^k}))$, since the centraliser of $\SL_2(\F_{q^k})$ in $\GL_2(\F_{q^k}) \rtimes \Gal({\F_{q^k}}/{\F_q})$ is precisely the centre of $\GL_2(\F_{q^k})$.

Hence by \cref{centra K}, $C_{X}(K)$ is isomorphic to a subgroup of the scalar matrices in $\GL_2(\F_{q^k})$, a group of order $q^k-1$.  In particular, since $X = K \cdot C_X(K)$, it now follows that $X/K$ is a group of order coprime to $p$, and hence every generator of $X$ has trivial image in $X/K$. In other words, $X = K = \Sp_{2k}(\F_q, \Omega_{f})$, which concludes the proof.
\end{proof}

Combining \Cref{refinement Li} with \Cref{Li p-generated}, we deduce the following.

\begin{lemma}
\label{new refinement Li}
Let $p > 2$ and $k$ be distinct primes, and let $q = p^r$ for some $r \geq 1$. Let $N$ be the image of the embedding of $\SL_2(\F_{q^k})$ in $\SL_{2k}(\F_q)$ for a fixed choice of $\F_q$-basis $\mathcal{B}$. Let $X \leqslant \SL_{2k}(\F_q)$ be a subgroup containing $N$. Suppose moreover that $X$ is  generated by elements of order a power of $p$. Then precisely one of the following holds:
\begin{enumerate}[(i)]
\item $N \< X$,
\item $X = \SL_{2k}(\F_q)$,
\item There exists an $x \in \F_{q^k}^{\times}$ such that \[X = \Sp_{2k}(\F_q,\Omega_{B_{f,\phi_x}}),\]  where the symplectic form $B_{f,\phi_x}$ is as defined in \cref{definitie B_f} and $f \colon \F_{q^k}^2 \times \F_{q^k}^2 \to \F_{q^k}$ is the standard symplectic form defined by $\Omega_f = \Omega_1$.
\end{enumerate}
\end{lemma}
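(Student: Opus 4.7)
The plan is to deduce this lemma directly from the combination of \Cref{refinement Li} and \Cref{Li p-generated}, which were set up for precisely this purpose. The hypotheses on $p$, $k$, $q$, $N$ and $X$ are exactly those of \Cref{refinement Li}, so I first apply it to obtain the trichotomy: either (i) $N \trianglelefteq X$, (ii) $X = \SL_{2k}(\F_q)$, or (iii') there exists $x \in \F_{q^k}^\times$ such that $\Sp_{2k}(\F_q, \Omega_{B_{f,\phi_x}}) \trianglelefteq X$. In cases (i) and (ii) the conclusion already matches (i) and (ii) of the present lemma, so nothing remains to check there.

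The only real work is to upgrade case (iii') to case (iii), that is, to strengthen the normal containment into equality. This is precisely the content of \Cref{Li p-generated}: under the extra hypothesis that $X$ is generated by elements of order a power of $p$, any subgroup of $\SL_{2k}(\F_q)$ that contains some $\Sp_{2k}(\F_q, \Omega_f)$ as a normal subgroup must coincide with it. Applying this with $\Omega_f := \Omega_{B_{f,\phi_x}}$ yields $X = \Sp_{2k}(\F_q, \Omega_{B_{f,\phi_x}})$, which is exactly case (iii).

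Finally, one should verify that the three cases are mutually exclusive, as claimed by the word \emph{precisely}. This is inherited from \Cref{refinement Li}, since the trichotomy there is already stated as mutually exclusive (case (ii) gives the full $\SL_{2k}(\F_q)$, case (iii) gives a proper subgroup $\Sp_{2k}(\F_q,\Omega_{B_{f,\phi_x}})$ that is not normalised to all of $\SL_{2k}(\F_q)$, and case (i) forces $N$ to be normal, which fails in the other two). I do not expect any genuine obstacle here: the entire argument is a two-line composition of the two previously cited results, and the only thing to be careful about is that the symplectic form produced in case (iii') of \Cref{refinement Li} is of the form required by \Cref{Li p-generated}, which holds tautologically since the latter is stated for an arbitrary symplectic form $f \colon \F_q^{2k} \times \F_q^{2k} \to \F_q$.
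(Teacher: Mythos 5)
Your proof is correct and matches the paper's intended argument exactly: the paper itself introduces the lemma with the single sentence "Combining \Cref{refinement Li} with \Cref{Li p-generated}, we deduce the following," which is precisely your two-step composition (apply \Cref{refinement Li} to get the trichotomy, then use \Cref{Li p-generated} with the hypothesis that $X$ is generated by $p$-power-order elements to upgrade the normal containment in case (iii) to an equality). The only minor imprecision is your remark that the hypotheses "are exactly those of \Cref{refinement Li}"; the present lemma additionally requires $p$ and $k$ to be \emph{distinct} primes, which \Cref{refinement Li} does not demand but which is exactly what \Cref{Li p-generated} needs — you do in fact invoke this correctly, so the argument stands.
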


We record the following useful fact which follows from the work of Dickson.
\begin{lemma}[{\cite[Chapter 2, Theorem 8.4]{Gorenstein}}]
\label{work of Dickson}
Let $S$ be a Singer element in $\GL_k(\F_q)$, with $q = p^r$ for $r \geq 1$ an odd prime power and $k\geq 1$ an integer such that $q^k \neq 9$. Let \[H := \left\langle \left(\begin{matrix}
1 & 0 \\ S & 1
\end{matrix}\right), \left(\begin{matrix}
1 & 1 \\ 0 & 1
\end{matrix}\right) \right\rangle \leqslant \SL_{2k}(\F_q),\]
where each entry is a $k \times k$-block. Then $H$ is isomorphic to $\SL_{2}(\F_{q^k})$. 
\end{lemma}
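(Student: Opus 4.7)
The plan is to realize $H$ inside a natural copy of $\SL_2(\F_{q^k})$ sitting inside $\SL_{2k}(\F_q)$, and then invoke Dickson's subgroup classification to force $H$ to be the whole of that copy.

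First I would set up the embedding. Since $S$ acts irreducibly on $\F_q^k$ by \Cref{action is irreducible}, its minimal polynomial has degree $k$ and equals its characteristic polynomial; the $\F_q$-subalgebra $\F_q[S] \subseteq \Mat_k(\F_q)$ is therefore a field of order $q^k$, and $S$ corresponds to a generator $\theta \in \F_{q^k}^\times$ of order $q^k-1$. Choosing a cyclic vector for $S$ fixes an $\F_q$-basis $\mathcal{B}$ of $\F_q^k$ under which $\F_q[S]$ is identified with $\F_{q^k}$ via $[\mu_\lambda]_{\mathcal{B}} \leftrightarrow \lambda$. The subgroup
\[
N := \left\{\begin{pmatrix}[\mu_w]_{\mathcal{B}} & [\mu_x]_{\mathcal{B}} \\ [\mu_y]_{\mathcal{B}} & [\mu_z]_{\mathcal{B}}\end{pmatrix} : w,x,y,z \in \F_{q^k},\ wz-xy=1\right\} \leqslant \SL_{2k}(\F_q)
\]
is then isomorphic to $\SL_2(\F_{q^k})$, and the two generators of $H$ correspond respectively to the unipotents $u := \begin{pmatrix}1 & 1 \\ 0 & 1\end{pmatrix}$ and $v := \begin{pmatrix}1 & 0 \\ \theta & 1\end{pmatrix}$ in $\SL_2(\F_{q^k})$. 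Thus $H \leqslant N$, and it remains to prove $H = N$.

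Next I would appeal to Dickson's classification of subgroups of $\PSL_2(\F_{q^k})$ (Gorenstein, Chapter 2, Theorem 8.4). Let $\overline{H}$ denote the image of $H$ in $\PSL_2(\F_{q^k})$. By Dickson, $\overline{H}$ is, up to $\PGL_2$-conjugacy, one of: (i) contained in a Borel (point stabiliser on $\mathbb{P}^1$); (ii) cyclic or dihedral; (iii) isomorphic to $A_4$, $S_4$ or $A_5$; or (iv) isomorphic to $\PSL_2(q')$ or $\PGL_2(q')$ for some subfield $\F_{q'} \subseteq \F_{q^k}$. Cases (i) and (ii) are immediate: $\bar u$ and $\bar v$ are unipotents fixing respectively the distinct points $\infty$ and $0$ of $\mathbb{P}^1$, so $\overline{H}$ fixes no point; and in a dihedral group all elements of order $>2$ lie in the cyclic subgroup of index $2$ and pairwise commute, whereas $\bar u$ and $\bar v$ have odd prime order $p$ and do not commute (direct matrix computation). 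For (iii), orders rule out $p \geq 7$ immediately; the remaining cases $p \in \{3,5\}$ can be excluded by comparing the $p$-Sylow structure of $A_4,S_4,A_5$ with the two transverse unipotent subgroups $\langle \bar u\rangle, \langle \bar v\rangle$, with the single genuine exception $q^k = 9$ (precisely why this case is excluded by hypothesis).

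For case (iv), if some $g \in \GL_2(\F_{q^k})$ satisfies $gHg^{-1} \subseteq \GL_2(\F_{q'})$, then the trace is preserved: $\Tr(uv) = \theta+2 \in \F_{q'}$, so $\theta \in \F_{q'}$; but $\theta$ generates $\F_{q^k}^\times$, forcing $\F_{q'} = \F_{q^k}$. Having eliminated (i)--(iii) and forced $\F_{q'} = \F_{q^k}$ in (iv), we conclude $\overline{H} = \PSL_2(\F_{q^k})$. Finally, since $q$ is odd and $\SL_2(\F_{q^k})$ is a non-split central extension of $\PSL_2(\F_{q^k})$ by $\{\pm I\}$ (the Sylow $2$-subgroup being generalised quaternion, not dihedral), the extension $1 \to H \cap \{\pm I\} \to H \to \overline{H} \to 1$ cannot split unless $\overline{H}$ is trivial; hence $-I \in H$ and $H = \SL_2(\F_{q^k})$. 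The main obstacle in this plan is the careful exclusion of the sporadic cases in (iii) for $p \in \{3,5\}$, which is where the hypothesis $q^k \neq 9$ enters, and where one relies most directly on Dickson's detailed list rather than general principles.
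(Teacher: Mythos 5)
The paper offers no proof of this lemma: it is quoted verbatim from Gorenstein, and that reference is Dickson's theorem that two transvections $\left(\begin{smallmatrix}1&\alpha\\0&1\end{smallmatrix}\right)$, $\left(\begin{smallmatrix}1&0\\\beta&1\end{smallmatrix}\right)$ with $\alpha,\beta$ in a field of odd characteristic generate $\SL_2(\F_p(\alpha\beta))$ unless $\F_p(\alpha\beta)=\F_9$. The only step the paper leaves implicit is the embedding: $\F_q[S]\cong\F_{q^k}$ is a field with $S\leftrightarrow\theta$ a primitive element, so $H$ sits inside $\SL_2(\F_q[S])\leqslant\SL_{2k}(\F_q)$ as $\langle\left(\begin{smallmatrix}1&1\\0&1\end{smallmatrix}\right),\left(\begin{smallmatrix}1&0\\\theta&1\end{smallmatrix}\right)\rangle$, and Dickson gives $H=\SL_2(\F_p(\theta))=\SL_2(\F_{q^k})$. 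So there is nothing to ``compare'' in the strict sense; what you have done is re-prove the cited Dickson theorem from first principles using the subgroup classification of $\PSL_2$, which is a genuinely longer route.

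Your embedding step and your lift from $\PSL_2$ back to $\SL_2$ (via the generalized-quaternion Sylow $2$ / non-split central extension) are both correct, and cases (i) and (ii) are dispatched cleanly. There are two real gaps in the middle. First, in case (iv) you write that $\Tr(uv)=\theta+2\in\F_{q'}$ ``because trace is preserved.'' The classification only says $\overline{H}$ is $\PGL_2$-conjugate into $\PGL_2(\F_{q'})$, and lifting the conjugator to $\GL_2(\F_{q^k})$ costs you a scalar: $g(uv)g^{-1}=\lambda h$ with $h\in\GL_2(\F_{q'})$ and $\lambda\in\F_{q^k}^\times$, so the trace you control is $\lambda\Tr(h)$, not $\Tr(h)$. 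The conjugation-invariant that descends to $\PGL_2$ is $\Tr^2/\det$, and since elements of $H$ have determinant $1$ you get $\Tr(x)^2\in\F_{q'}$ for all $x\in H$; combining two such values, e.g. $\Tr(uv)^2-\Tr(uv^{-1})^2=(\theta+2)^2-(2-\theta)^2=8\theta\in\F_{q'}$, then forces $\theta\in\F_{q'}$ (using $p\neq2$). Second, case (iii) is not actually proved: you assert that ``comparing $p$-Sylow structure'' rules out $A_4,S_4,A_5$ except when $q^k=9$, but for $p\in\{3,5\}$ each of these groups is generated by two non-commuting elements of order $p$, so there is no immediate Sylow obstruction. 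The actual argument needs to look at the order of $\overline{uv}$ (bounded by $60$ in case (iii)), which bounds $[\F_p(\theta):\F_p]$, and then check the finitely many small $q^k$ directly; this is exactly where $q^k=9$ is isolated. You flag this as the main obstacle, which is fair, but as written it is a gap rather than a proof. As a practical matter, for a paper-quality argument it is cleaner to simply cite Dickson's transvection theorem (as the paper does) rather than rebuild it.
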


Then we obtain the following generalisation of \cite[Proposition 7.18]{CCKW}.
\begin{proposition}
\label{CCKW prop 7.18}
Let $p > 2$ and $k$ be distinct primes, and let $q = p^r$ for some $r \geq 1$ be such that $q^k \neq 9$. Suppose $M_1, M_2, M_3 \in \Mat_{k\times k}(\F_q)$. Then the following hold: 
\begin{enumerate}[(i)]
\item If $M_1 M_2$ is a Singer element in $\GL_k(\F_q)$, then the subgroup \[
\left\langle \left(\begin{matrix}
1 & M_1 \\ 0 & 1
\end{matrix}\right), \left(\begin{matrix}
1 & 0 \\ M_2 & 1
\end{matrix}\right) \right\rangle \leqslant \SL_{2k}(\F_q) \]
is isomorphic to $\SL_2(\F_{q^k})$.
\item \label{M1M2M3} If $M_1M_2$ is a Singer element in $\GL_k(\F_q)$ and $M_1 M_2 M_3 \neq M_3 M_2 M_1$, then
\[
\left\langle\left(\begin{matrix}
1 & M_1 \\ 0 & 1
\end{matrix}\right), \left(\begin{matrix}
1 & 0 \\ M_2 & 1
\end{matrix}\right),\left(\begin{matrix}
1 & M_3 \\ 0 & 1
\end{matrix}\right) \right\rangle \leqslant \SL_{2k}(\F_q)
\]
is either isomorphic to $\Sp_{2k}(\F_q)$, or it is the whole of $\SL_{2k}(\F_q)$.
\end{enumerate}
\end{proposition}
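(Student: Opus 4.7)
The plan is to reduce part (i) to \Cref{work of Dickson} via an appropriate base change, and then to derive part (ii) from \Cref{new refinement Li} by ruling out the normal case.

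For part (i), the key observation is that since $M_1 M_2$ is a Singer element (in particular invertible), both $M_1$ and $M_2$ lie in $\GL_k(\F_q)$. Conjugating the subgroup by $P := \diag(I_k, M_1^{-1}) \in \GL_{2k}(\F_q)$ transforms its two generators into $\left(\begin{smallmatrix} I & I \\ 0 & I \end{smallmatrix}\right)$ and $\left(\begin{smallmatrix} I & 0 \\ S & I \end{smallmatrix}\right)$, where $S := M_1 M_2$. \Cref{work of Dickson} then identifies the conjugated—and hence the original—subgroup with $\SL_2(\F_{q^k})$.

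For part (ii), let $X$ denote the group in the statement and $N := \langle g_1, g_2 \rangle \leqslant X$ the subgroup generated by its first two generators, which by part (i) is isomorphic to $\SL_2(\F_{q^k})$. Each generator of $X$ is block-unipotent and nontrivial (in particular $M_3 \neq 0$, else $M_1 M_2 M_3 = 0 = M_3 M_2 M_1$ contradicts the hypothesis), so each has order $p$, and therefore $X$ is generated by elements of $p$-power order. \Cref{new refinement Li} then yields three mutually exclusive cases: $N \< X$, or $X = \SL_{2k}(\F_q)$, or $X = \Sp_{2k}(\F_q, \Omega_{B_{f,\phi_x}})$ for some $x \in \F_{q^k}^{\times}$. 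Since all groups $\Sp_{2k}(\F_q, \Omega)$ are isomorphic, the last two cases match the two alternatives of the proposition, so it remains only to rule out the first.

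To rule out $N \< X$, apply the base change $P$ once more. Then $\tilde N := P^{-1} N P$ is the standard embedding of $\SL_2(\F_{q^k})$ into $\SL_{2k}(\F_q)$ via $S$, whose elements are precisely the block $2 \times 2$ matrices whose four blocks lie in the subfield $\F_q[S] \subseteq \Mat_k(\F_q)$ (with the appropriate determinant condition). Set $T := M_3 M_1^{-1}$, so that $\tilde g_3 := P^{-1} g_3 P = \left(\begin{smallmatrix} I & T \\ 0 & I \end{smallmatrix}\right)$. A direct computation shows that $\tilde g_3 \tilde g_2 \tilde g_3^{-1}$ has $(1,1)$-block $I + TS$. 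For this conjugate to lie in $\tilde N$, the block $TS$ must belong to $\F_q[S]$, which (as $S$ is invertible in the field $\F_q[S]$) forces $T \in \F_q[S]$. Because $S$ is a Singer element (hence non-derogatory: its minimal polynomial is irreducible of degree $k$), its centraliser in $\Mat_k(\F_q)$ equals $\F_q[S]$, so $T$ commutes with $S$. Expanding $M_3 M_1^{-1} \cdot M_1 M_2 = M_1 M_2 \cdot M_3 M_1^{-1}$ and rearranging yields $M_3 M_2 M_1 = M_1 M_2 M_3$, contradicting the hypothesis. The main difficulty is concentrated in this last step: choosing a base change that converts normality of $N$ in $X$ into the transparent commutativity condition $[T, S] = 0$, and relating the centraliser of a Singer element to the polynomial subring $\F_q[S]$.
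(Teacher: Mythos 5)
Your proof is correct, and it follows the same overall skeleton as the paper's — conjugate by a block-diagonal matrix to reduce (i) to Lemma~\ref{work of Dickson}, then for (ii) note all three generators have order $p$ (so $p$-power order), invoke Lemma~\ref{new refinement Li}, and rule out the normal case $N \lhd X$. However, the mechanism you use to rule out $N \lhd X$ is genuinely different and more direct than the paper's. The paper works inside $H := N_{\GL_{2k}(\F_q)}(G) \cong \GL_2(\F_{q^k}) \rtimes \Gal(\F_{q^k}/\F_q)$, computes $[H:G] = k(q^k-1)$, and runs a Sylow $p$-subgroup argument: every element of order $p$ in $H$ lies in a Sylow $p$-subgroup of $G$, $Y$ centralises the standard Sylow subgroup $\mathcal{S}$, hence $Y \in H \iff Y \in \mathcal{S} \iff M_3M_1^{-1} \in \F_q[M_1M_2]$. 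You bypass all of this by simply conjugating $\tilde g_2$ by $\tilde g_3$, reading off the $(1,1)$-block $I + TS$, and observing that membership in $\tilde N$ forces $TS$ (hence $T$, since $\F_q[S]$ is a field containing $S$) into $\F_q[S]$. This avoids the appeal to Proposition~\ref{normaliser of SL2} and the Sylow machinery entirely. What the paper's route buys is a clean \emph{iff} characterisation of when $Y$ normalises $G$ (not strictly needed here, but arguably more reusable); what yours buys is brevity and concreteness. Both proofs lean on the same underlying fact that $\tilde N$ (i.e.\ $G$) is \emph{exactly} the image of the field-extension embedding — so its blocks all lie in $\F_q[S]$ — which is what Lemma~\ref{work of Dickson} delivers. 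One very minor presentational note: you apply Lemma~\ref{new refinement Li} directly to $N$ and $X$, but $N$ is only a $\GL_{2k}(\F_q)$-conjugate of a standard field-extension embedding; strictly one should first pass to $P^{-1}XP$ and $\tilde N$, apply the lemma there, and conjugate back — which you implicitly do in the computation, and since the ultimate conclusion is stated up to isomorphism this costs nothing. Also, your remark that the centraliser of $S$ equals $\F_q[S]$ is true but superfluous at that point: you already have $T \in \F_q[S]$, and commutativity of $\F_q[S]$ alone gives $[T,S]=0$.
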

\begin{proof}
Let $M_1, M_2 \in \GL_k(\F_q)$ be such that $M_1 M_2$ is a Singer element. Then \Cref{work of Dickson} implies that 
\[G := \left\langle \left(\begin{matrix}
1 & 0 \\
M_1M_2 & 1 
\end{matrix}\right), \left(\begin{matrix}
1 & 1 \\ 0 & 1
\end{matrix}\right) \right\rangle \cong \SL_{2}(\F_{q^k}).\]
Write $d := \diag(1, M_1) \in \GL_{2k}(\F_q)$. Then 
\[d^{-1} G d =  \left\langle \left(\begin{matrix}
1 & M_1 \\
0 & 1
\end{matrix}\right), \left(\begin{matrix}
1 & 0 \\
M_2 & 1
\end{matrix}\right) \right\rangle.\]
The first claim follows. \par
Suppose we have $M_1, M_2, M_3 \in \Mat_{k \times k} (\F_q)$ satisfying the conditions from (\ref{M1M2M3}). If we can show that $\left(\begin{smallmatrix}
1 & M_3 \\ 0 & 1
\end{smallmatrix}\right)$ does not normalise $d^{-1} G d$, the result follows from \Cref{new refinement Li}. Indeed, setting
\[N := d^{-1} G d \leqslant X := \left\langle\left(\begin{matrix}
1 & M_1 \\ 0 & 1
\end{matrix}\right), \left(\begin{matrix}
1 & 0 \\ M_2 & 1
\end{matrix}\right),\left(\begin{matrix}
1 & M_3 \\ 0 & 1
\end{matrix}\right) \right\rangle \leqslant \SL_{2k}(\F_q),\]
if $X$ does not normalise $N$, \Cref{new refinement Li} implies that either $X \cong \Sp_{2k}(\F_q)$ or $X \cong \SL_{2k}(\F_q)$.

We show that  \[Y := d \left(\begin{matrix}
1 & M_3 \\ 0 & 1
\end{matrix}\right) d^{-1} = \left(\begin{matrix}
1 & M_3 M_1^{-1} \\ 0 & 1
\end{matrix}\right)\]
does not normalise $G$, and the conclusion follows. Let $H := N_{\GL_{2k}(\F_q)}(G)$. \Cref{normaliser of SL2} implies that $H \cong \GL_2(\F_{q^k})\rtimes \Gal({\F_{q^k}}/{\F_q})$. Since $\SL_2(\F_{q^k}) = \ker\left(\det \colon \GL_2(\F_{q^k}) \to \F_{q^k}^{\times}\right)$, we obtain that \[\left\lvert\frac{\GL_2(\F_{q^k})}{\SL_2(\F_{q^k})}\right\rvert = q^k-1,\]
and in particular, $\left[H : G \right] = k(q^k-1)$. 
Since $k$ is coprime to $p$ by assumption, it follows that every Sylow $p$-subgroup of $G$ is a Sylow $p$-subgroup of $H$, and since $G$ is normal in $H$, all Sylow $p$-subgroups of $H$ are contained in $G$. 

Let $\alpha \in \F_{q^k}$ be a primitive element. The group $\SL_2(\F_{q^k})$ is isomorphic to $G$, and the isomorphism is given by 
\[
\begin{pmatrix}
w & x \\ y & z
\end{pmatrix} \mapsto \begin{pmatrix}
\omega(w) & \omega(x) \\ \omega(y) & \omega(z)
\end{pmatrix},
\]
where \[\omega \colon \F_{q^k} \to \F_q[M_1 M_2],\quad \alpha \mapsto M_1 M_2. \]
Here $\F_q[M_1M_2]$ denotes the $\F_q$-algebra generated by $M_1M_2$, which is isomorphic to $\F_{q^k}$ via $\omega$ since $M_1 M_2$ is a Singer element. Hence $G$ contains in particular the subgroup 
\[\mathcal{S} := \left\{\left(\begin{matrix}
1 & T \\ 0 & 1
\end{matrix}\right) \mid T\in \F_{q}[M_1 M_2]\right\}. \]
Note that $\mathcal{S}$ is a Sylow $p$-subgroup of $G$, since it is of order $q^k$ and $\abs{G} = \abs{\SL_2(\F_{q^k})} = q^k(q^{2k}-1)$. Hence all Sylow $p$-subgroups of $G$ and thus $H$ are conjugate by an element of $\GL_{2k}(\F_q)$ to $\mathcal{S}$. Now, 
\[
Y \in N_{\GL_{2k}(\F_q)}(G) \iff M_3 M_1^{-1} \in \F_q[M_1 M_2].
\]
Indeed, since $Y$ is of order $p$, it is contained in $H$ if and only if it is contained in some Sylow $p$-subgroup of $H$, if and only if it is contained in some Sylow $p$-subgroup of $G$. Note that $Y$ centralises $\mathcal{S}$, and since $Y$ is of order $p$, this implies that $\langle \mathcal{S}, Y\rangle$ is a $p$-group. Thus $Y$ is contained in a Sylow $p$-subgroup of $G$ if and only if $Y \in \mathcal{S}$, i.e.\ if and only if $M_3M_1^{-1} \in \F_q[M_1M_2]$. 

The $\F_q$-algebra $\F_q[M_1 M_2]$ is commutative. Since $M_1 M_2 M_3 \neq M_3 M_2 M_1$, \[(M_1 M_2) (M_3 M_1^{-1}) \neq M_3 M_2 = (M_3 M_1^{-1}) (M_1 M_2).\]
Hence, $M_3 M_1^{-1} \not\in \F_q[M_1 M_2]$, or in other words, $Y$ does not normalise $G$, as desired.
\end{proof}

We are now able to prove our first key result. 
\begin{proposition}
\label{image contains an SL2k}
Let $p > 2$ and $k > 3$ be distinct primes, and let $q = p^r > 3$ for some $r \geq 1$. Let $M_a, M_b, M_c \in \GL_k(\F_q)$ satisfy the following conditions:
\begin{enumerate}[(i)]
\item $S := (M_aM_b+M_bM_a)M_c = [\mu_{\lambda}]_{\mathcal{B}}$, for some primitive element $\lambda \in \F_{q^k}$, and some self-dual $\F_q$-basis $\mathcal{B} = \{b_1, \ldots, b_k\}$ of $\F_{q^k}$  such that $\Tr(\lambda b_1^2) \neq 0$,
\item
\[M_aM_b M_a^{-1} M_b^{-1} = \left(\begin{matrix}
x & 0 & 0 \\
0 & x^{-1} & 0 \\
0 & 0 & I_{k-2}
\end{matrix}\right),\]
for $x \in \F_q^{\times}$ such that $x \not\in \{\pm 1\}$.
\end{enumerate}
Let $\Phi_{M_a,M_b,M_c} \colon \mfU^+_{\HB22}(\F_q) \to \SL_{4k}(\F_q)$ be the group homomorphism provided by \Cref{group hom}. Then the image of $\Phi_{M_a,M_b,M_c}$ contains an isomorphic copy of $\SL_{2k}(\F_q)$, embedded as matrices of the form
\[\left(\begin{matrix}
1 & 0 & 0 & 0 \\ 0 & \star & 0 & \star \\ 0 & 0 & 1 & 0 \\ 0 & \star & 0 & \star
\end{matrix}\right). \]
\end{proposition}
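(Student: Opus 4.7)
The target-form subgroup of $\SL_{4k}(\F_q)$ --- matrices fixing rows and columns $1$ and $3$ (with $I_k$ blocks at $(1,1)$ and $(3,3)$) --- is naturally isomorphic to $\SL_{2k}(\F_q)$ acting on rows and columns $2$ and $4$. The strategy is to exhibit three explicit matrices inside the image satisfying the hypotheses of \Cref{CCKW prop 7.18}, and conclude from that proposition (together with hypothesis (ii)) that the image contains the full target-form $\SL_{2k}(\F_q)$.

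The first step is to compute $[V_a(\lambda), V_b(\nu)]$ directly. Writing $V_a(\lambda) = I + \lambda X$ and $V_b(\nu) = I + \nu Y$ with $X = M_a(E_{14}+E_{23})$ and $Y = M_b(E_{21}-E_{34})$ (both nilpotent of square zero), block-matrix arithmetic gives $XY = -M_aM_b E_{24}$ and, crucially, $YX = M_bM_a E_{24}$; the higher-order terms $XYX, YXY, (XY)^2$ all vanish. The standard commutator identity then yields
\[
[V_a(\lambda), V_b(\nu)] = I - \lambda\nu(M_aM_b + M_bM_a)E_{24},
\]
a pure upper element of target form. Since $V_c(\mu) = I + \mu M_c E_{42}$
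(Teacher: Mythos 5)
Your opening computation is correct and agrees with the paper: $[V_a(\lambda),V_b(\nu)]=I-\lambda\nu(M_aM_b+M_bM_a)E_{24}$, and together with $V_c(\mu)=I+\mu M_cE_{42}$ and hypothesis (i) this does put a copy of $\SL_2(\F_{q^k})$ of the target form inside the image, exactly as in the paper's first step. However, the proposal breaks off mid-argument, and more importantly the announced plan has a genuine gap. \Cref{CCKW prop 7.18}(ii) does \emph{not} let you "conclude \ldots the full target-form $\SL_{2k}(\F_q)$": its conclusion is only the dichotomy that the group generated is isomorphic to $\Sp_{2k}(\F_q)$ \emph{or} equal to $\SL_{2k}(\F_q)$. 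The symplectic alternative is not a removable artifact of the lemma --- it genuinely occurs (this is precisely the content of \Cref{image contains an Sp2k}, where symmetric $M_a,M_b,M_c$ with $M_aM_b+M_bM_a$ scalar produce exactly $\Sp_{2k}$). So after applying \Cref{CCKW prop 7.18}(ii) you still owe an argument, using hypothesis (ii) on $X=M_aM_bM_a^{-1}M_b^{-1}$, that rules out the symplectic case.

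Concretely, the elements available by short commutators are $I+M_cE_{42}$, $I-(M_aM_b+M_bM_a)E_{24}$, and (via the length-five nested commutators) $I-4M_aM_bM_cM_bM_aE_{24}$ and $I-4M_bM_aM_cM_aM_bE_{24}$. After conjugating by $\diag(1,M_aM_b+M_bM_a)$ the last two become $\left(\begin{smallmatrix}1&Z\\0&1\end{smallmatrix}\right)$ and $\left(\begin{smallmatrix}1&Z^t\\0&1\end{smallmatrix}\right)$ with $Z=(I+X)^{-1}XS(I+X)^{-1}$. The paper first shows $Z+Z^t\notin\F_q[S]$ (using $\Tr(\lambda b_1^2)\neq 0$ and $x\neq\pm1$) to get a group $G$ that is $\Sp_{2k}(\F_q,\Omega_k)$ or $\SL_{2k}(\F_q)$, and then, in the symplectic case, shows that $Z$ itself is \emph{non-symmetric} (because $X$ cannot commute with the Singer element $S$ by \Cref{Singer not block diag}), so that $\left(\begin{smallmatrix}1&Z\\0&1\end{smallmatrix}\right)\notin\Sp_{2k}(\F_q,\Omega_k)$ and a second application of \Cref{new refinement Li} forces the enlarged group to be $\SL_{2k}(\F_q)$. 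Your proposal contains no mechanism for this exclusion step, which is where both hypotheses (i) and (ii) do their real work; without it the argument only establishes the weaker "$\Sp_{2k}$ or $\SL_{2k}$" conclusion.
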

\begin{proof}
Let $M_a, M_b,M_c \in \GL_k(\F_q)$,  $x \in \F_q^{\times}$ and $\lambda \in \F_{q^k}^{\times}$ be as in the statement. There is an obvious embedding of $\SL_{2k}(\F_q)$ in $\SL_{4k}(\F_q)$, given by 
\begin{equation}
\label{embedding Hc}
\left( \begin{matrix}
w & x \\ y & z
\end{matrix}\right) \mapsto \left(\begin{matrix}
1 & 0 & 0 & 0 \\ 0 & w & 0 & x \\ 0 & 0 & 1 & 0 \\ 0 & y & 0 & z
\end{matrix}\right), 
\end{equation}
so it suffices to show that the image of $\Phi_{M_a,M_b,M_c}$ contains elements of the form 
\[
\left(\begin{matrix}
1 & 0 & 0 & 0 \\ 0 & 1 & 0 & X \\ 0 & 0 & 1 & 0 \\ 0 & 0 & 0 & 1
\end{matrix}\right), \quad  \left(\begin{matrix}
1 & 0 & 0 & 0 \\ 0 & 1 & 0 & 0 \\ 0 & 0 & 1 & 0 \\ 0 & Y & 0 & 1
\end{matrix}\right),
\]
for some $X,Y \in \Mat_{k \times k}(\F_q)$, such that the associated $2k \times 2k$ matrices 
\[
\left(\begin{matrix}
1 & X \\ 0 & 1 
\end{matrix}\right) \text{ and } \left(\begin{matrix}
1 & 0 \\ Y & 1
\end{matrix}\right)
\]
generate $\SL_{2k}(\F_q)$. 

Let $\mathcal{B}:= \{b_1, \ldots, b_k\}$ be a self-dual $\F_q$-basis of $\F_{q^k}$, as provided by \Cref{existence self dual basis}. Let $S := [\mu_{\lambda}]_{\mathcal{B}} \in \GL_k(\F_q)$. Then since $\mathcal{B}$ is an orthonormal basis for the trace form, \[S = (\Tr(\lambda b_i  b_j))_{1 \leq i,j \leq k}.\] In particular, $S$ is a symmetric Singer element of $\GL_k(\F_q)$.  \par 
We will write 
\begin{equation}
\label{Ma Mb Mc}
V_a' := V_a(1), \quad V_b' := V_b(1), \quad V_c' := V_c(1), 
\end{equation}
with 
\[
V_a(\xi)  := \left( \begin{matrix}
1 & 0 & 0 & \xi  M_a \\ 0 & 1 & \xi M_a & 0 \\ 0 & 0 & 1 & 0 \\ 0 & 0 & 0 & 1
\end{matrix}\right) ,
V_b(\xi) := \left( \begin{matrix}
1 & 0 & 0 & 0 \\ \xi M_b & 1 & 0 & 0 \\ 0 & 0 & 1 & -\xi M_b \\ 0 & 0 & 0 & 1
\end{matrix}\right),  
V_c(\xi)  := \left( \begin{matrix}
1 & 0 & 0 & 0 \\ 0 & 1 & 0 & 0 \\ 0 & 0 & 1 & 0 \\ 0 & \xi M_c & 0 & 1
\end{matrix} \right), 
\]
for all $\xi \in \F_q$.

A direct calculation shows that in the image of $\Phi_{M_a,M_b,M_c}$, 
\begin{equation*}
[V_a',V_b'] = \left(\begin{matrix}
1 & 0 & 0 & 0 \\ 0 & 1 & 0 & -M_aM_b -M_bM_a \\ 0 & 0 & 1 & 0 \\ 0 & 0 & 0 & 1
\end{matrix} \right).
\end{equation*}
The condition on \[X := M_a M_b M_a^{-1} M_b^{-1}\] implies that $I_k + X$ is invertible, and in particular $M_aM_b + M_b M_a$ is invertible. Setting $M_c = (M_aM_b +M_b M_a)^{-1} S$, it follows by \Cref{CCKW prop 7.18}~(i) that the image of $\Phi_{M_a,M_b,M_c}$,  under the identification (\ref{embedding Hc}), contains the subgroup
\begin{equation*}
N := \left\langle  \left(\begin{matrix}
1 & 0 \\ M_c & 1
\end{matrix}\right),  \: \left(\begin{matrix}
 1 & -M_aM_b -M_bM_a \\ 0 & 1
\end{matrix}\right) \right\rangle  \cong \SL_2(\F_{q^k}).
\end{equation*}
We now show that one can construct sufficiently many elements in the image of $\Phi_{M_a,M_b,M_c}$ so that together with the generators of $\SL_2(\F_{q^k})$ above, they generate an overgroup of $\SL_{2}(\F_{q^k})$ which is isomorphic to $\SL_{2k}(\F_q)$. \par It is a direct calculation to show that 
\[
[V_c',V_a',V_a',V_b',V_b'] = \left(\begin{matrix}
1 & 0 & 0 & 0 \\ 0 & 1 & 0 & -4 M_b M_a M_c M_a M_b \\ 0 & 0 & 1 & 0 \\ 0 & 0 & 0 & 1
\end{matrix}\right),
\]
and 
\[
[V_c',V_b',V_b', V_a',V_a'] = \left(\begin{matrix}
1 & 0 & 0 & 0 \\ 0 & 1 & 0 & -4 M_a M_b M_c M_b M_a  \\ 0 & 0 & 1 & 0 \\ 0 & 0 & 0 & 1
\end{matrix}\right).
\]
Additionally,
\begin{align*}
Z  & := (I_k+X)^{-1} X S  (I_k+X)^{-1} = (I_k + X^{-1})^{-1} S (I_k + X)^{-1} \\
& = (I_k + M_bM_a M_b^{-1}M_a^{-1})^{-1}(M_aM_b + M_b M_a) M_c  (I_k + M_a M_b M_a^{-1} M_b^{-1})^{-1}\\
& = \left(( M_a M_b+M_b M_a)M_b^{-1} M_a^{-1}\right)^{-1} (M_a M_b + M_b M_a) M_c ((M_b M_a + M_a M_b)M_a^{-1} M_b^{-1})^{-1} \\
& = M_a M_b M_c M_b M_a (M_a M_b + M_b M_a)^{-1}.
\end{align*}
Since $X$ and $S$ are symmetric,  
\begin{align*}
Z^t &  = (I_k+X)^{-1} SX  (I_k+X)^{-1} = (I_k + X)^{-1} S (I_k+X^{-1})^{-1}\\
& = M_b M_a(M_a M_b+ M_b M_a)^{-1} (M_aM_b + M_b M_a)  M_c M_b M_a ( M_a M_b+M_b M_a)^{-1} \\
& = M_b M_a M_c M_b M_a (M_aM_b+M_bM_a)^{-1}.
\end{align*}
Let $\ell \geq 1$ be such that $-4 \ell = 1 \mod p$. Then 
\begin{align*}
\left(\begin{matrix}
1 & M_a M_b M_c M_b M_a  \\ 0 & 1
\end{matrix}\right)  \mapsto [V_c',V_b',V_b',V_a', V_a']^{\ell} &\quad  \text{ and }\\ \left(\begin{matrix}
1 & M_b M_a M_c M_a M_b  \\ 0 & 1
\end{matrix}\right) \mapsto [V_c',V_a',V_a',V_b',V_b']^{\ell} &
\end{align*}
under the identification (\ref{embedding Hc}). Moreover, conjugation by the matrix $\diag(1, M_aM_b +M_b M_a)$ induces the following identifications:
\begin{align*}
\left(\begin{matrix}
1 & 0  \\ M_c & 1
\end{matrix}\right) & \:\mapsto \left(\begin{matrix}
1 & 0 \\
S & 1
\end{matrix}\right), \quad 
\,\left(\begin{matrix}
1 & M_a M_b + M_b M_a \\ 0 & 1
\end{matrix}\right) \, \mapsto \left(\begin{matrix}
1 & 1 \\ 0 & 1
\end{matrix}\right), \\
\left(\begin{matrix}
1 & M_a M_b M_c M_b M_a  \\ 0 & 1
\end{matrix}\right) & \: \mapsto \left(\begin{matrix}
1 & Z \\ 0 & 1
\end{matrix}\right),  \quad 
\left(\begin{matrix}
1 & M_b M_a M_c M_a M_b  \\ 0 & 1
\end{matrix}\right) \mapsto \left(\begin{matrix}
1 & Z^t \\ 0 & 1
\end{matrix}\right).
\end{align*}
In particular, the composition of the inverse of the embedding (\ref{embedding Hc}) and  conjugation by \newline $\diag(1, M_aM_b + M_b M_a)$ induces an isomorphism which sends
\begin{align*}
H := & \left\langle V_c', [V_a',V_b'], [V_c',V_b',V_b',V_a', V_a']^{\ell}[V_c',V_a',V_a',V_b',V_b']^{\ell}\right\rangle  \\
\cong & \Bigg\langle \left(\begin{matrix}
1 & 0  \\ M_c & 1
\end{matrix}\right), \left(\begin{matrix}
1 & M_aM_b +M_bM_a \\ 0 & 1 
\end{matrix}\right), \left(\begin{matrix}
1 &  M_aM_bM_cM_bM_a + M_bM_aM_cM_aM_b \\ 0 & 1 
\end{matrix}\right)\Bigg\rangle
\end{align*}
to 
\begin{equation*}
G := \left\langle  \left(\begin{matrix}
1 & 0 \\ S & 1 
\end{matrix}\right), \left(\begin{matrix}
1 & 1 \\ 0 & 1
\end{matrix}\right), \left(\begin{matrix}
1 & Z + Z^t\\ 0 & 1
\end{matrix}\right)\right\rangle.
\end{equation*}
Note that $Z +Z^t$ is symmetric. We claim that $Z + Z^t \not\in \langle S \rangle = C_{\GL_k(\F_q)}(S)$, where the latter equality holds by \Cref{centraliser of Singer element}. Writing $y = (1+x)^{-1}$ and $z = (1+x^{-1})^{-1}$ (note that $x \not\in\{\pm 1\}$), we obtain
\begin{align*}
Z + Z^t  =  & \: (I+X)^{-1}(XS + SX)(I+X)^{-1} \\
= & \: \left(\begin{matrix}
z & &\\
& y & \\
& & 2^{-1}I_{k-2}
\end{matrix}\right) S \left(\begin{matrix}
y & & \\
& z & \\
& & 2^{-1} I_{k-2}
\end{matrix}\right) 
+ \left(\begin{matrix}
y & & \\
& z & \\
& & 2^{-1} I_{k-2}
\end{matrix}\right) S \left(\begin{matrix}
z & & \\
& y & \\
& & 2^{-1}I_{k-2}
\end{matrix}\right) \\
 = & \: 2^{-1} \left( \begin{array}{@{}c|c@{}}
    \begin{matrix}
      4yz S_{1,1} & 2 \left(y^{2}+z^{2} \right) S_{1,2} \\
      2\left(y^{2}+z^{2}\right) S_{2,1} & 4yz S_{2,2}
   \end{matrix} 
      & \begin{matrix}
      	(y+z)S_{1,3} & \ldots & (y+z)S_{1,k} \\
      	(y+z)S_{2,3} & \ldots & (y+z)S_{2,k}
		\end{matrix}       \\
   \cmidrule[0.2pt]{1-2}
   \begin{matrix}
   (y+z)S_{3,1} & & (y+z)S_{3,2} \\
   \vdots &  &\vdots \\
    (y+z) S_{k,1} & &(y+z) S_{k,2}
   \end{matrix} &  [S_{i,j}]_{3 \leq i,j \leq k} \\
\end{array} \right).
\end{align*}
If $Z + Z^t$ were to commute with $S$, then so would $S - 2(Z+Z^t)$. Moreover, since $\langle S \rangle = C_{\GL_k(\F_q)}(S)$, it would follow that $S - 2(Z+Z^t) \in \F_q[S]$. But by the above, this element is equal to
\begin{align*}
\left( \begin{array}{@{}c|c@{}}
    \begin{matrix}
      (1-4yz) S_{1,1} & (1-2 \left(y^{2}+z^{2} \right) )S_{1,2} \\
     (1- 2\left(y^{2}+z^{2}\right)) S_{2,1} & (1-4yz) S_{2,2}
   \end{matrix} 
      & \begin{matrix}
      	(1-y-z)S_{1,3} & \ldots & (1-y-z)S_{1,k} \\
      	(1-y-z)S_{2,3} & \ldots & (1-y-z)S_{2,k}
		\end{matrix}       \\
   \cmidrule[0.2pt]{1-2}
   \begin{matrix}
   (1-y-z)S_{3,1} & & (1-y-z)S_{3,2} \\
   \vdots &  &\vdots \\
    (1-y-z) S_{k,1} & &(1-y-z) S_{k,2}
   \end{matrix} &  0 \\
\end{array} \right).
\end{align*}
Since $k \geq 5$, this matrix is non-invertible, and since the only non-invertible matrix in $\F_q[S]$ is the zero matrix, it follows that $S - 2(Z+Z^t) = 0$. This implies that $(1-4yz)S_{1,1} = 0$. Since $4yz = 1 \iff 2 +x + x^{-1} = 4$, which holds if and only if $x = 1$, we obtain that $S_{1,1}=0$. This is a contradiction since by assumption, $S_{1,1} = \Tr(\lambda b_1^2) \neq 0$. 

It follows that $Z+Z^t \not\in C_{\GL_k(\F_q)}(S)$, and in particular, by \Cref{CCKW prop 7.18}, $G \cong \Sp_{2k}(\F_q)$ or $G \cong \SL_{2k}(\F_q)$. In the latter case, there is nothing left to prove, since it implies that $H \cong \SL_{2k}(\F_q)$, as desired. So suppose $G \cong \Sp_{2k}(\F_q)$. Then, since every generator of $G$ is contained in the standard symplectic group $\Sp_{2k}(\F_q,\Omega_k)$, it follows that \[G = \Sp_{2k}(\F_q, \Omega_k).\]
Note that $X$ does not commute with $S$. Indeed, if $XS = SX$, then 
\[
\left(\begin{matrix}
xS_{1,1} & x S_{1,2} & x S_{1,3} & \ldots & xS_{1,k} \\
x^{-1} S_{2,1} & x^{-1} S_{2,2} & x^{-1} S_{2,3} & \ldots & x^{-1} S_{2,k} \\
S_{3,1} & S_{3,2} &  S_{3,3} & \ldots & S_{3,k} \\
\vdots & \vdots & \vdots  & \ddots & \vdots \\
S_{k,1} & S_{k,2} & S_{k,3} & \ldots & S_{k,k}
\end{matrix}\right) = \left(\begin{matrix}
xS_{1,1} & x^{-1}S_{1,2} &  S_{1,3} & \ldots & S_{1,k} \\
x S_{2,1} & x^{-1} S_{2,2} & S_{2,3} & \ldots & S_{2,k} \\
x S_{3,1} & x^{-1} S_{3,2} &  S_{3,3} & \ldots & S_{3,k} \\
\vdots & \vdots &  \vdots & \ddots & \vdots \\
x S_{k,1} & x^{-1} S_{k,2} & S_{k,3} & \ldots & S_{k,k}
\end{matrix}\right), 
\]
and hence since $x \neq \pm 1$,
\[
S = \left(\begin{matrix}
S_{1,1} & 0 & 0  \\
 0 & S_{2,2} & 0  \\
 0 &  0 & [S_{i,j}]_{3 \leq i,j \leq k}
\end{matrix}\right),
\]
which is a contradiction to \Cref{Singer not block diag}. In particular,
\[(I + X)^{-1} X S (I+X)^{-1} \neq (I+X)^{-1}SX (I+X)^{-1},\]
and it follows that $Z = (I + X)^{-1} X S (I+X)^{-1}$ is non-symmetric. Hence, 
\[\left(\begin{matrix}
1 & Z \\
 0 & 1
\end{matrix}\right) \not\in \Sp_{2k}(\F_q,\Omega_k).\]
Write $d := \diag(1,M_a M_b + M_b M_a)$. Then 
\[\SL_2(\F_{q^k}) \cong d^{-1} N d \lneq  G = \Sp_{2k}(\F_q, \Omega_k) \lneq \left\langle G, \left(\begin{matrix}
1 & Z \\ 0 & 1
\end{matrix}\right) \right\rangle.\]
The group $G$ does not normalise $d^{-1} N d $, since it is simple modulo its centre. Hence it follows by \Cref{new refinement Li} that
\begin{align*}
\left\langle G, \left(\begin{matrix}
1 & Z \\ 0 & 1
\end{matrix}\right) \right\rangle \cong \SL_{2k}(\F_q).
\end{align*}
But under conjugation by $\diag(1, M_a M_b + M_b M_a)$ and the embedding given by \cref{embedding Hc}, the group above is isomorphic to 
\[ \langle V_c', [V_a',V_b'], [V_c',V_b',V_b',V_a', V_a']^{\ell}, [V_c',V_a',V_a',V_b',V_b']^{\ell}\rangle \leqslant \Ima(\Phi_{M_a,M_b,M_c}).\qedhere\]
\end{proof}

We also record the following result.
\begin{lemma}[{\cite[Theorem 1]{LenstraSchoof}}]
\label{PrimitiveNormalBasis}
For every prime power $q > 1$ and every integer $m$, there exists a normal $\F_q$-basis of $\F_{q^m}$, i.e.\ a basis of the form \[\{b, b^{q}, b^{q^2}, \ldots, b^{q^{m-1}}\},\] such that $b$ (and hence every $b^{q^{i}}$) is primitive.
\end{lemma}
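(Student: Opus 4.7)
The plan is to combine a character-sum estimate for the number of primitive elements of $\F_{q^m}$ with an analogous estimate for the number of normal (equivalently, free) elements over $\F_q$, and show that the two densities are large enough that the two sets must overlap. Recall that $b \in \F_{q^m}^{\times}$ is primitive iff $b \notin (\F_{q^m}^{\times})^r$ for every prime $r \mid q^m-1$; equivalently, using multiplicative characters $\chi$ of $\F_{q^m}^{\times}$ of order dividing the radical of $q^m-1$, the indicator of primitivity can be written as the usual Möbius-weighted sum
\[
\mathbf{1}_{\mathrm{prim}}(b)=\frac{\varphi(q^m-1)}{q^m-1}\sum_{d \mid q^m-1}\frac{\mu(d)}{\varphi(d)}\sum_{\chi:\,\mathrm{ord}(\chi)=d}\chi(b).
\]

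Next, view $\F_{q^m}$ as a cyclic $\F_q[T]$-module on which $T$ acts as the Frobenius $\varphi\colon x\mapsto x^{q}$. An element is a normal basis generator precisely when it is $\F_q[T]$-free, i.e.\ its $\F_q[T]$-annihilator is $(T^m-1)$. This is a polynomial analogue of primitivity, and the indicator of freeness can be written in the same shape, now with $\mathbb{F}_q[T]$-valued Möbius function $\mu_q$ over monic divisors of $T^m-1$ and with the role of multiplicative characters played by additive characters $\psi\colon \F_{q^m}\to\C^{\times}$ twisted by polynomial multipliers: for each monic $D\mid T^m-1$ one sums $\psi(D(\varphi)\cdot b)$. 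Writing $\Phi_q(T^m-1)$ for the $\F_q[T]$-Euler function, the density of free elements equals $\Phi_q(T^m-1)/q^m$ in the main term.

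Multiplying the two indicators and summing over $b\in\F_{q^m}$ gives an exact formula
\[
\#\{b\in\F_{q^m}\mid b\text{ primitive and free}\}=\frac{\varphi(q^m-1)\,\Phi_q(T^m-1)}{q^m-1}+\text{error},
\]
where the error is a double sum of mixed additive-multiplicative character sums of the form $\sum_{b\in\F_{q^m}}\chi(b)\psi(D(\varphi)b)$. By Weil's bound each such sum is $O(q^{m/2})$, so after summing over divisors the total error is bounded by $W(q^m-1)\,W(T^m-1)\,q^{m/2}$, where $W(\cdot)$ counts squarefree divisors. Thus whenever $\varphi(q^m-1)\,\Phi_q(T^m-1)$ dominates $W(q^m-1)\,W(T^m-1)\,(q^m-1)q^{m/2}$, at least one primitive normal element exists.

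The main obstacle, exactly as in Lenstra-Schoof, is that the character-sum estimate by itself only proves the theorem for $q^m$ sufficiently large, leaving a finite list of exceptional pairs $(q,m)$ to be handled. The endgame consists of two steps: first, sharpening the sieve by restricting the Möbius sums to divisors supported on a carefully chosen set of primes (respectively, irreducible factors of $T^m-1$), so as to reduce the number of surviving exceptions to something computationally tractable; and second, exhibiting a primitive normal element by direct search in each remaining case. I expect the character-sum bookkeeping and the reduction of the exceptional list to be the technically delicate part, while the existence proof in the generic range is conceptually straightforward.
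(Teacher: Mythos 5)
The paper does not prove this lemma: it is stated as a citation of Lenstra--Schoof, with no argument supplied. Your sketch is a faithful high-level summary of the Lenstra--Schoof proof (multiplicative/additive character sums, Weil bound on the mixed Gauss sums, sieving on prime divisors of $q^m-1$ and irreducible factors of $T^m-1$ to shrink the exceptional range, then finite verification), so it is consistent with the cited reference; note only that what you have written is a plan rather than a complete argument, the delicate sieve calibration and the exceptional-case computations being precisely where the original paper's work lies.
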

In particular, we obtain the following.
\begin{lemma}
\label{Singer element for SL2 exists}
Let $q > 1$ be a prime power and $k \geq 2$. Suppose $b \in \F_{q^k}$ satisfies $\Tr(b) = 1$. Then there exists a primitive element $\lambda \in \F_{q^k}$ such that $\Tr(\lambda b) \neq 0$.
\end{lemma}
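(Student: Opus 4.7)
The plan is to exploit the existence of a primitive normal basis, together with non-degeneracy of the trace form, to locate a primitive element on which the linear functional $x \mapsto \Tr(xb)$ is non-zero.

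First I would invoke \Cref{PrimitiveNormalBasis} to obtain a primitive element $\beta \in \F_{q^k}$ such that the set $\mathcal{B}_\beta := \{\beta, \beta^q, \beta^{q^2}, \ldots, \beta^{q^{k-1}}\}$ is an $\F_q$-basis of $\F_{q^k}$. The key observation is then that every element of $\mathcal{B}_\beta$ is itself primitive: since the Frobenius $x \mapsto x^q$ is a field automorphism of $\F_{q^k}$, it preserves the multiplicative order of each element, so each $\beta^{q^i}$ generates $\F_{q^k}^\times$.

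Next I would use the fact that the symmetric $\F_q$-bilinear form
\[
B \colon \F_{q^k} \times \F_{q^k} \to \F_q, \qquad (x, y) \mapsto \Tr_{\F_{q^k}/\F_q}(xy),
\]
is non-degenerate (since $\F_{q^k}/\F_q$ is separable). From $\Tr(b) = 1$ we have $b \neq 0$, so the linear functional $\Tr(\,\cdot\, b)$ on $\F_{q^k}$ is non-zero. Because $\mathcal{B}_\beta$ is a basis, at least one element of $\mathcal{B}_\beta$ must have non-zero image under this functional: otherwise $\Tr(xb) = 0$ for every $x$ in a basis, hence for every $x \in \F_{q^k}$, contradicting non-degeneracy. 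Pick an index $i \in \{0, 1, \ldots, k-1\}$ for which $\Tr(\beta^{q^i} b) \neq 0$ and set $\lambda := \beta^{q^i}$; by the previous paragraph this is a primitive element of $\F_{q^k}$, and by construction $\Tr(\lambda b) \neq 0$.

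There is essentially no obstacle here: the argument is a clean two-line pigeonhole on a basis, once the non-trivial ingredient — existence of a primitive normal basis, \Cref{PrimitiveNormalBasis} — is in hand. The hypothesis $\Tr(b) = 1$ is used only to guarantee $b \neq 0$; in fact the proof gives the slightly stronger statement that for any non-zero $b \in \F_{q^k}$, there is a primitive $\lambda \in \F_{q^k}$ with $\Tr(\lambda b) \neq 0$.
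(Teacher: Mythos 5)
Your proof is correct and follows essentially the same strategy as the paper: both rely on \Cref{PrimitiveNormalBasis} to obtain an $\F_q$-basis of $\F_{q^k}$ consisting entirely of primitive elements, then argue that a non-zero linear functional cannot vanish on every basis element. The only cosmetic difference is that the paper argues by contradiction and uses $\Tr(b)=1$ directly — expanding $1=\sum_i x_i\lambda_i$ in the primitive basis and applying $\Tr(\,\cdot\,b)$ to derive $1=0$ — whereas you invoke non-degeneracy of the trace form to see that $\Tr(\,\cdot\,b)$ is non-zero already from $b\neq 0$; as you observe, this yields the (marginally stronger, though unneeded here) conclusion for any non-zero $b$.
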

\begin{proof}
Suppose that for all primitive elements $\lambda \in \F_{q^k}$, $\Tr(\lambda b) = 0$. By \Cref{PrimitiveNormalBasis}, there exist primitive elements $\lambda_i \in \F_{q^k}$ and scalars $x_i \in \F_q$ such that 
\[1 = \sum_{i} x_i \lambda_i.\]
But then $1 = \Tr(b) = \sum_{i} x_i \Tr(\lambda_i b) = 0$,
a contradiction.
\end{proof}

\begin{proposition}
There exist matrices $M_a, M_b, M_c \in \GL_{k}(\F_q)$ satisfying the conditions of \Cref{image contains an SL2k}.
\end{proposition}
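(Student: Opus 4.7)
The approach is to decouple the problem into two independent pieces, exploiting the freedom condition~(i) leaves for the choice of $M_c$. Provided $M_a M_b + M_b M_a$ is invertible, we may simply set $M_c := (M_a M_b + M_b M_a)^{-1} S$; condition~(i) then reduces to producing a Singer element $S \in \GL_k(\F_q)$ of the prescribed form, independently of $M_a$ and $M_b$.

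For the Singer element, I would apply \Cref{lempelwein} (using that $k$ is an odd prime, since $k > 3$) to obtain a self-dual normal basis $\mathcal{B} = \{b_1, \ldots, b_k\} := \{b, b^q, \ldots, b^{q^{k-1}}\}$ of $\F_{q^k}/\F_q$; in particular $\Tr(b_1^2) = 1$. Then \Cref{Singer element for SL2 exists}, applied to the element $b_1^2$ of trace $1$, yields a primitive $\lambda \in \F_{q^k}$ with $\Tr(\lambda b_1^2) \neq 0$. Setting $S := [\mu_{\lambda}]_{\mathcal{B}} \in \GL_k(\F_q)$ produces a Singer element (its order equals the multiplicative order of $\lambda$, namely $q^k - 1$) whose matrix is $(\Tr(\lambda b_i b_j))_{1\leq i,j \leq k}$ by self-duality of $\mathcal{B}$; in particular it is symmetric with $(1,1)$-entry $\Tr(\lambda b_1^2) \neq 0$, exactly as demanded.

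It remains to construct $M_a, M_b \in \GL_k(\F_q)$ with $M_a M_b + M_b M_a$ invertible and satisfying condition~(ii) for some $x \in \F_q^\times \setminus \{\pm 1\}$ (such an $x$ exists because $q - 1 \geq 4$ and $p > 2$). The natural move is to take $M_a$ and $M_b$ block-diagonal with a $2 \times 2$ upper-left block and $I_{k-2}$ below, reducing everything to a short computation in $\GL_2(\F_q)$. Concretely, fixing $\alpha \in \F_q^\times \setminus \{\pm 1\}$ and taking the upper-left blocks to be
\[
A := \begin{pmatrix} \alpha & 0 \\ 0 & 1 \end{pmatrix} \quad \text{and} \quad B := \begin{pmatrix} 0 & 1 \\ 1 & 0 \end{pmatrix},
\]
one verifies that $A B A^{-1} B^{-1} = \diag(\alpha, \alpha^{-1})$ and $AB + BA = (1+\alpha) B$. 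The latter is invertible since $\alpha \neq -1$, and the lower block contributes a factor $2 I_{k-2}$, invertible because $p > 2$. Taking $x := \alpha$ and $M_c := (M_a M_b + M_b M_a)^{-1} S \in \GL_k(\F_q)$ then fulfils both conditions of \Cref{image contains an SL2k}. The entire construction is explicit; the only step requiring genuine care is verifying the self-duality formulae for $S$, while the $2 \times 2$ block computations are routine, so I do not anticipate any real obstacle.
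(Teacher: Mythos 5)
Your proof is correct and takes essentially the same route as the paper: both decouple the problem by choosing $M_c := (M_aM_b + M_bM_a)^{-1}S$ with $S$ a symmetric Singer element built from a self-dual basis via \Cref{Singer element for SL2 exists}, and both then arrange condition~(ii) for $M_a, M_b$ with $x \in \F_q^\times \setminus \{\pm 1\}$. The one genuine difference is that where the paper argues existentially that $M_a, M_b$ exist because $[\GL_k(\F_q), \GL_k(\F_q)] = \SL_k(\F_q)$ and $\diag(x,x^{-1},I_{k-2}) \in \SL_k(\F_q)$ (implicitly invoking that this element is a \emph{single} commutator, a fact not literally supplied by the displayed equality), you instead exhibit the explicit block-diagonal pair $M_a = \diag(\alpha,1)\oplus I_{k-2}$, $M_b = \left(\begin{smallmatrix}0&1\\1&0\end{smallmatrix}\right) \oplus I_{k-2}$ and verify the commutator and anticommutator by hand — which is cleaner and closes that gap. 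Your use of \Cref{lempelwein} rather than \Cref{existence self dual basis} to produce the self-dual basis is an inessential variation; both deliver $\Tr(b_1^2) = 1$, which is all that is needed to feed into \Cref{Singer element for SL2 exists}.
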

\begin{proof}
The commutator subgroup $[\GL_k(\F_q), \GL_k(\F_q)]$ of $\GL_k(\F_q)$ is given by $\SL_{k}(\F_q)$. It follows that there exist $M_a, M_b \in \GL_k(\F_q)$ with \[ M_a M_b M_a^{-1} M_b^{-1} = \left(\begin{matrix}
x & & \\
& x^{-1} & \\
 && I_{k-2}
\end{matrix}\right), \]
for any $x \in \F_q^{\times}\setminus\{\pm 1\}$. Such an $x$ always exists, since by assumption $q > 3$. By \Cref{Singer element for SL2 exists}, for any self-dual basis $\mathcal{B} = \{b_1, \ldots, b_k\}$ of $\F_{q^k}$ over $\F_q$, there exists a primitive element $\lambda \in \F_{q^k}$ such that $\Tr_{{\F_{q^k}}/\F_q}(\lambda b_1^2) \neq 0$. The matrix $I_k + M_a M_b M_a^{-1} M_b^{-1}$ is invertible by the condition on $x$. Setting $M_c := (M_aM_b + M_b M_a)^{-1} [\Tr(\lambda b_i b_j)]_{i,j}$, the matrices $M_a$, $ M_b$ and $M_c$ satisfy the desired conditions.
\end{proof}

We now show that the subgroups of $2k \times 2k$ matrices we constructed in the image of $\Phi_{M_a, M_b, M_c}$ give rise to the desired quotients.
\begin{theorem}
\label{onto SL4}
For $q$, $k$, $M_a, M_b$ and $M_c$ as in \Cref{image contains an SL2k}, the associated homomorphism 
\[\Phi_{M_a, M_b,M_c} \colon  \mfU^+_{\HB22}(\F_q) \to \SL_{4k}(\F_q) \]
as provided by \Cref{group hom} is a surjective group homomorphism.
\end{theorem}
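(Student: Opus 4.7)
The plan is to upgrade the embedded $\SL_{2k}(\F_q) \leq \Ima\Phi_{M_a, M_b, M_c}$ provided by the preceding proposition to an equality with $\SL_{4k}(\F_q)$, by producing enough elementary transvections $I_{4k} + \lambda E_{i, j}$ in the image to generate $\SL_{4k}(\F_q)$.

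Write $H \leq \Ima\Phi_{M_a, M_b, M_c}$ for the embedded $\SL_{2k}(\F_q)$ at block positions $(2, 4)$, so that $H$ contains every transvection $I + \lambda E_{i, j}$ whose $i$ and $j$ both lie in block rows/columns $\{2, 4\}$, together with arbitrary scalings via block-diagonal elements $\diag(I_k, A, I_k, D)$ with $A, D \in \GL_k(\F_q)$ and $\det A \cdot \det D = 1$. In particular, $V_c(\lambda) = I + \lambda M_c E_{4, 2}$ already lies in $H$.

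The key step is then to compute the commutators $[V_c, V_a]$ and $[V_c, V_b]$. A direct calculation shows that $[V_c(\mu), V_a(\lambda)]$ has the form $I + \alpha E_{1, 2} + \beta E_{4, 3} + \gamma E_{1, 3}$, where $\alpha, \beta, \gamma \in \Mat_{k \times k}(\F_q)$ are built from $M_a, M_c$ and the scalars $\lambda, \mu$; likewise $[V_c, V_b]$ produces components in blocks $(4, 1), (3, 2)$, and $(3, 1)$. These three-block commutators factor as products of commuting block transvections, so by conjugating each such commutator by a block-diagonal element $\diag(I_k, A, I_k, D) \in H$ we may rescale the two "outer" components independently while leaving the "inner" component unchanged. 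Suitable products of $[V_c, V_a]$ with its $H$-conjugates then isolate pure elementary transvections in each of blocks $(1, 2), (4, 3)$, and (after a further commutator trick) $(1, 3)$; an analogous analysis of $[V_c, V_b]$ yields pure transvections in blocks $(4, 1), (3, 2)$, and $(3, 1)$. A parallel conjugation argument applied to $V_a$ and $V_b$ themselves, combined with $H$-conjugates to cancel unwanted components, yields pure transvections in the remaining off-diagonal blocks $(1, 4), (2, 3), (2, 1)$, and $(3, 4)$. Throughout, the invertibility of $M_a, M_b, M_c$ guarantees that the coefficients of the isolated transvections sweep out all of $\F_q$.

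With elementary transvections now available in all twelve off-diagonal block positions, the remaining transvections internal to the diagonal blocks $(1, 1)$ and $(3, 3)$ are recovered by the standard identity $[I + \lambda E_{i, j}, I + \mu E_{j, \ell}] = I + \lambda\mu E_{i, \ell}$ applied to suitable pairs (for instance, linking a block-$(1, 2)$ transvection with a block-$(2, 1)$ transvection yields an arbitrary transvection in block $(1, 1)$). Since $\SL_{4k}(\F_q)$ is generated by its elementary transvections, it will follow that $\Ima\Phi_{M_a, M_b, M_c} = \SL_{4k}(\F_q)$. The main obstacle will be disentangling the twinned supports of $V_a$ and $V_b$, which is handled by exploiting the full $\SL_{2k}$-freedom in $H$ to act independently on block rows/columns $2$ and $4$, allowing one component to be cancelled while the other is preserved.
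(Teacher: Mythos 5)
Your plan takes a genuinely different (and substantially more laborious) route than the paper's, and it currently contains gaps that need to be filled before it constitutes a complete proof.

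\textbf{Difference in approach.} You only commute $V_a$ and $V_b$ with the single generator $V_c = I_{4k} + M_c E_{4,2}$ of $H$, and then try to disentangle the resulting three-block elements by conjugating with the block-diagonal (Levi) part of $H$. The paper instead exploits the \emph{unipotent} part of $H \cong \SL_{2k}(\F_q)$: since $H$ contains $V_{-\alpha_2}(X) := I_{4k} + X E_{4,2}$ for \emph{every} $X \in \Mat_{k\times k}(\F_q)$, the triple commutators
\[
[V_b', V_{-\alpha_2}(X), V_b'] = I_{4k} - 2 M_b X M_b E_{3,1}, \qquad
[V_a', V_{-\alpha_2}(X), V_a'] = I_{4k} + 2 M_a X M_a E_{1,3}
\]
produce \emph{pure} block transvections whose coefficients $2M_b X M_b$ and $2 M_a X M_a$ already range over all of $\Mat_{k\times k}(\F_q)$ as $X$ varies (here the invertibility of $M_a, M_b$ enters). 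This gives a second embedded $\SL_{2k}(\F_q)$ in block positions $(1,3)$ in one step, after which the remaining root subgroups of $\SL_{4k}(\F_q)$ are obtained by conjugating $V_a'$, $V_b'$ and the elements of $H$ by block-diagonal matrices from the two $\SL_{2k}$-copies. You never use this freedom in $X$ at all, and pay for it with a long bookkeeping argument.

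\textbf{Gaps.} Two of your key assertions do not hold as stated. First, conjugating by $\diag(I_k, A, I_k, D) \in H$ does \emph{not} let you rescale the two outer blocks of $[V_c, V_a]$ independently: since these block-diagonal matrices come from $\SL_{2k}(\F_q)$ they are constrained by $\det A \cdot \det D = 1$, so for $k$ odd you cannot, for instance, take $A = -I_k$ and $D = I_k$. (This can be repaired by choosing $D$ with $\det D = -1$ and $I_k + D^{-1}$ invertible, which exists for $q > 3$, but it is a nontrivial extra step.) Second, and more seriously, the sentence ``the invertibility of $M_a, M_b, M_c$ guarantees that the coefficients of the isolated transvections sweep out all of $\F_q$'' is unjustified. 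After the cancellation step what you actually isolate is a \emph{single} block transvection $I_{4k} + Y_0 E_{r,s}$ for one specific matrix $Y_0$ (e.g. $2\lambda\mu(I_k + D^{-1}) M_c M_a$), which need not be scalar and might even be singular for a careless choice of $D$. To conclude you still need to (a) arrange that $Y_0$ is invertible, and (b) observe that the set of $Y$ with $I_{4k} + Y E_{r,s}$ in the image is closed under left or right $\GL_k(\F_q)$-multiplication (by $H$-conjugation) and under addition (by products of commuting transvections), hence equals all of $\Mat_{k\times k}(\F_q)$ once it contains an invertible element. Without spelling this out you do not actually have the elementary transvections on which the rest of the argument rests. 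Once these two gaps are filled the strategy goes through, but it is considerably longer than the paper's, which avoids both issues by exploiting the full unipotent part of $H$ from the outset.
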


\begin{proof}
Let $M_a, M_b$ and $M_c$ be as in \Cref{image contains an SL2k}. The image of $\Phi_{M_a,M_b,M_c}$ contains an isomorphic copy of $\SL_{2k}(\F_q)$, embedded as in \cref{embedding Hc}. In particular, for any matrices $X,Y \in \Mat_{k \times k}(\F_q)$, the image of $\Phi_{M_a,M_b,M_c}$ contains the elements 
\[V_{-\alpha_2}(X) := \left(\begin{matrix}
1 & 0 & 0 & 0 \\ 
0 & 1 & 0 & 0 \\
0 & 0 & 1 & 0 \\
0 & X & 0 & 1
\end{matrix}\right) \quad \text{ and }\quad V_{\alpha_2}(Y) := \left(\begin{matrix}
1 & 0 & 0 & 0 \\ 
0 & 1 & 0 & Y \\
0 & 0 & 1 & 0 \\
0 & 0 & 0 & 1
\end{matrix}\right),\]
and more generally, the subgroup
\[H_c := \left\langle V_{-\alpha_2}(X) , V_{\alpha_2}(Y) \mid X,Y \in \Mat_{k \times k }(\F_q) \right\rangle \cong \SL_{2k}(\F_q).\]
We set, as before, 
\[V_a' := \left(\begin{matrix}
1 & 0 & 0 & M_a \\
0 & 1 & M_a & 0 \\
0 & 0 & 1 & 0 \\
0 & 0 & 0 & 1
\end{matrix}\right) \quad \text{ and } \quad V_b' := \left(\begin{matrix}
1 & 0 & 0 & 0 \\
M_b & 1 & 0 & 0 \\
0 & 0 & 1 & -M_b\\
0 & 0 & 0 & 1
\end{matrix}\right).\]
For any $X \in \Mat_{k \times k}(\F_q)$, one calculates that
\[
[V_b', V_{- \alpha_2}(X), V_b'] =   \left[  I_{4k} -M_b E_{3,2} - XM_b E_{4,1} - M_b X M_b E_{3,1}, V_b' \right] = I_{4k} - 2 M_b X M_b E_{3,1}.
\]
Writing $X := -2^{-1} M_b^{-1} Y M_b^{-1}$ for $Y \in \Mat_{k \times k}(\F_q)$, we deduce that
\[
\left(\begin{matrix}
1 & 0 & 0 & 0 \\
 0 & 1 & 0 & 0 \\
 Y & 0 & 1 & 0 \\
  0 & 0 & 0 & 1
\end{matrix}\right) \in \Ima(\Phi_{M_a,M_b,M_c}), \quad \text{ for all } Y \in \Mat_{k \times k} (\F_q).
\]
Similarly, for $X \in \Mat_{k \times k}(\F_q)$,
\[
[V_a', V_{-\alpha_2}(X), V_a'] = \left[ I_{4k} + M_a X E_{1,2} - XM_a E_{4,3} +M_a X M_a E_{1,3}, V_a \right] = I_{4k} + 2 M_a X M_a E_{1,3}.
\]
Writing $X := 2^{-1}M_a^{-1} Y M_a^{-1}$ with $Y \in \Mat_{k\times k}(\F_q)$, we conclude that there is a second embedded copy of $\SL_{2k}(\F_q)\leqslant \Ima(\Phi_{M_a,M_b,M_c})$, embedded as matrices of the form
\begin{equation}
\label{second SL2}
\left(\begin{matrix}
\star & 0 & \star & 0 \\
0 & 1 & 0 & 0 \\
\star & 0 & \star & 0\\
0 & 0 & 0 & 1 
\end{matrix}\right).
\end{equation}
Together with $H_c$, it follows that $\Ima(\Phi_{M_a,M_b,M_c})$ contains in particular the generators $x_{-\alpha_i}(\lambda) = I_{4k} + \lambda E_{i+1,i}$ and $x_{\alpha_i}(\lambda) = I_{4k} + \lambda E_{i,i+1}$ of the root subgroups of $\SL_{4k}(\F_q)$, for \[i \in  \{1, \ldots, 4k-1\} \setminus \{k, 2k, 3k \}\] and $\lambda \in \F_q$ (see \cref{subsec Kac-Moody grps}). It remains to show that $\Ima(\Phi_{M_a,M_b,M_c})$ contains $x_{\pm\alpha_k}(\lambda)$, $x_{\pm\alpha_{2k}}(\lambda)$ and $x_{\pm\alpha_{3k}}(\lambda)$ for $\lambda \in \F_q$. Since $\Ima(\Phi_{M_a,M_b,M_c})$ contains a copy of $\SL_{2k}(\F_q)$ embedded as in \cref{second SL2}, and a copy of $\SL_{2k}(\F_q)$ embedded as in \cref{embedding Hc} it contains in particular the matrices
\[
\left(\begin{matrix}
X & 0 & 0 & 0 \\
0 & 1 & 0 & 0 \\
0 & 0 & 1 & 0 \\
0 & 0 & 0 & 1
\end{matrix}\right) \text{ and }
\left(\begin{matrix}
1 & 0 & 0 & 0 \\
0 & 1 & 0 & 0 \\
0 & 0 & 1 & 0 \\
0 & 0 & 0 & Y
\end{matrix}\right) \qquad \forall \: X, Y \in \SL_{k}(\F_q).
\]
Conjugating $V_b'$ by the product of the above elements, one obtains
\begin{align*}
& \left(\begin{matrix}
X^{-1} & 0 & 0 & 0 \\
0 & 1 & 0 & 0 \\
0 & 0 & 1 & 0 \\
0 & 0 & 0 & Y^{-1}
\end{matrix}\right) \left(\begin{matrix}
1 & 0 & 0 & 0 \\
M_b & 1 & 0 & 0 \\
0 & 0 & 1 & -M_b \\
0 & 0 & 0 & 1
\end{matrix}\right) \left(\begin{matrix}
X & 0 & 0 & 0 \\
0 & 1 & 0 & 0 \\
0 & 0 & 1 & 0 \\
0 & 0 & 0 & Y
\end{matrix}\right) \\ = & \left(\begin{matrix}
1 & 0 & 0 & 0 \\
M_b X & 1 & 0 & 0 \\
0 & 0 & 1 & -M_b Y \\
0 & 0 & 0 & 1
\end{matrix}\right) =: V_b(X,Y) \in \Ima(\Phi_{M_a,M_b,M_c}), \quad \forall \: X,Y \in \SL_k(\F_q).
\end{align*}
In particular, 
\begin{align*}
& V_b(X,I_k) V_b(X,-I_k) = \left(\begin{matrix}
1 & 0 & 0 & 0 \\
2M_b X & 1 & 0 & 0 \\
0 & 0 & 1 & 0\\
0 & 0 & 0 & 1
\end{matrix}\right) \in \Ima(\Phi_{M_a,M_b,M_c}),  \text{ and } \\ & V_b(I_k,Y)V_b(-I_k,Y) = \left(\begin{matrix}
1 & 0 & 0 & 0 \\ 0 & 1 & 0 & 0 \\ 0 & 0 & 1 & -2M_b Y \\ 0 & 0 & 0 & 1
\end{matrix}\right)\in \Ima(\Phi_{M_a,M_b,M_c}), \quad \forall \: X,Y \in \SL_k(\F_q).
\end{align*}
Let $Z := \diag(1, \det(M_b), 1, \ldots, 1) \in \GL_k(\F_q)$. Define $X := M_b^{-1}(Z + E_{1,k}) \in \SL_k(\F_q)$ and $X' := M_b^{-1} Z \in \SL_k(\F_q)$. Then 
\[
\left(\begin{matrix}
1 & 0 & 0 & 0 \\
2M_b X & 1 & 0 & 0 \\
0 & 0 & 1 & 0\\
0 & 0 & 0 & 1
\end{matrix}\right) \left(\begin{matrix}
1 & 0 & 0 & 0 \\
-2M_b X' & 1 & 0 & 0 \\
0 & 0 & 1 & 0\\
0 & 0 & 0 & 1
\end{matrix}\right) = \left(\begin{matrix}1 & 0 & 0 & 0\\ 2E_{1,k} & 1 & 0 & 0 \\ 0 & 0 & 1 & 0 \\ 0 & 0 & 0 & 1\end{matrix}\right). 
\]
We conclude that $x_{-\alpha_k}(2\lambda) \in \Ima(\Phi_{M_a,M_b,M_c})$, for all $\lambda \in \F_q$. But $q$ is odd and hence $x_{-\alpha_k}(\lambda) \in \Ima(\Phi_{M_a,M_b,M_c})$, for all $\lambda \in \F_q$. 

Setting now $Y := M_b^{-1}(Z+E_{k,1})$ and $Y' := M_b^{-1}Z$ in the products $V_b(I_k, Y) V_b(-I_k,Y)$ and  $V_b(I_k, Y') V_b(-I_k,Y')$, a similar  calculation shows that $x_{\alpha_{3k}}(\lambda) \in \Ima(\Phi_{M_a,M_b,M_c})$ for all $\lambda \in \F_q$. 

Moreover, one calculates that 
\[
[V_a', V_{-\alpha_2}(Y) ] = \left(\begin{matrix}
1 & M_a Y & 0 & 0 \\
0 & 1 & 0 & 0 \\
0 & 0 & 1 & 0 \\
0 & 0 & -Y M_a & 1
\end{matrix}\right) \in \Ima(\Phi_{M_a,M_b,M_c}),\quad  \forall Y \in \Mat_{k \times k}(\F_q).
\]
Now for any $X, Z \in \SL_k(\F_q)$ and writing $Y:= M_a^{-1}$, one has the elements
\begin{align*}
& \left(\begin{matrix}
X & 0 & 0 & 0 \\ 
0 & 1 & 0 & 0 \\
0 & 0 & 1 & 0 \\
0 & 0 & 0 & Z
\end{matrix}\right) \left(\begin{matrix}
1 & M_a Y & 0 & 0 \\
0 & 1 & 0 & 0 \\
0 & 0 & 1 & 0 \\
0 & 0 & -Y M_a & 1
\end{matrix}\right) \left(\begin{matrix}
X^{-1} & 0 & 0 & 0 \\ 
0 & 1 & 0 & 0 \\
0 & 0 & 1 & 0 \\
0 & 0 & 0 & Z^{-1} 
\end{matrix}\right) \\
= & \left(\begin{matrix}
1 & X & 0 & 0 \\
0 & 1 & 0 & 0 \\
0 & 0 & 1 & 0 \\
0 & 0 & -Z  & 1 
\end{matrix}\right) \in \Ima(\Phi_{M_a,M_b,M_c}), \quad \forall X,Z \in \SL_k(\F_q).
\end{align*}
An argument similar to that of the previous paragraph now implies that $x_{\alpha_k}(\lambda)$ and $x_{-\alpha_{3k}}(\lambda) \in \Ima(\Phi_{M_a,M_b,M_c})$, for all $\lambda \in \F_q$. \par 
Finally, by the embedding \cref{second SL2} of $\SL_{2k}(\F_q)$ in the image of $\Phi_{M_a,M_b,M_c}$, and by the previous paragraph, $\Ima(\Phi_{M_a,M_b,M_c})$ contains the elements 
\[
\left(\begin{matrix}
1 & 0 & 0 & 0 \\
0 & 1 & 0 & 0 \\
X & 0 & 1 & 0 \\
0 & 0 & 0 & 1
\end{matrix}\right) \text{ and } \left(\begin{matrix}
1 & Y & 0 & 0 \\
0 & 1 & 0 & 0 \\
0 & 0 & 1 & 0 \\
0 & 0 & 0 & 1
\end{matrix}\right),\quad  \forall \: X,Y \in \Mat_{k \times k}(\F_q).
\]
Taking their commutator, it follows that 
\[\left(\begin{matrix}
1 & 0 & 0 & 0 \\
0 & 1 & 0 & 0 \\
0 & 2 XY & 1 & 0 \\
0 & 0 & 0 & 1
\end{matrix}\right) \in \Ima(\Phi_{M_a,M_b,M_c}), \quad \forall \: X,Y \in \Mat_{k \times k}(\F_q).\] Hence $x_{-\alpha_{2k}}(\lambda) \in \Ima(\Phi_{M_a,M_b,M_c})$, for all $\lambda \in \F_q$. Since the transpose of the above elements is contained in the image as well (again by the embedding of $H_c$ and the previous calculations), we obtain that $x_{\alpha_{2k}}(\lambda) \in \Ima(\Phi_{M_a,M_b,M_c})$, for all $\lambda \in \F_q$. 

We conclude that $\Ima(\Phi_{M_a,M_b,M_c})$ contains all of the root subgroups associated to the simple roots and their opposites. Hence $\Ima(\Phi_{M_a,M_b,M_c}) = \SL_{4k}(\F_q)$ and it follows that $\Phi_{M_a,M_b,M_c}$ is surjective. 
\end{proof}
\section{Quotients of \texorpdfstring{$\mfU^+_{\HB22}(\F_q)$}{U+} isomorphic to \texorpdfstring{$\Sp_{2n}(\F_q)$}{Spn(Fq)}}
\label{sect: SP4k}
We now proceed in a manner similar to that of the previous section to construct quotients of $\mfU^+_{\HB22}(\F_q)$ of symplectic type.
\begin{proposition}
\label{image contains an Sp2k}
Let $p > 2$ and $k > 3$ be distinct primes and let  $q = p^r > 3$ for some $r \geq 1$. Let $M_a, M_b \in \GL_k(\F_q)$ be symmetric matrices, and let $S \in \GL_k(\F_q)$ be a symmetric Singer element. Suppose the following conditions are satisfied:
\begin{enumerate}[(i)]
\item $M_a M_b + M_b M_a$ is a (non-zero) scalar matrix,
\item $M_a M_b S M_b M_a \not\in \langle S \rangle$. \label{ABSBA}
\end{enumerate}
Then, defining $M_c := (M_a M_b + M_b M_a)^{-1} S$, the image of the homomorphism $\Phi_{M_a,M_b,M_c}$ given by \Cref{group hom} is contained in $\Sp_{4k}(\F_q, \Omega_{2k})$, and it contains a copy of $\Sp_{2k}(\F_q, \Omega_k)$, with the embedding given by \cref{embedding Hc}.
\end{proposition}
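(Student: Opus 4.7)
The plan is to split the proof in the obvious way, treating the two assertions of the statement separately.

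For the containment $\Ima(\Phi_{M_a,M_b,M_c}) \subseteq \Sp_{4k}(\F_q,\Omega_{2k})$, by \Cref{U+ generated by simple root subgroups} it suffices to check that each of the matrices $V_a(\lambda)$, $V_b(\lambda)$, $V_c(\lambda)$ preserves $\Omega_{2k}$. Each such matrix has the form $I_{4k} + \lambda A_i$ with $A_i$ nilpotent of square zero (as in the proof of \Cref{is Lie alg homomorphism}), so the preservation reduces to the linear equation $A_i^T\Omega_{2k} + \Omega_{2k}A_i = 0$. Expanding $\Omega_{2k}$ in the natural $4\times 4$ block form over $k\times k$ blocks (compatibly with the block decomposition of the $V_i$), a direct bookkeeping shows that this equation holds precisely when $M_a$, $M_b$, and $M_c$ are all symmetric. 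Assumption~(i) gives $M_c = c^{-1}S$ with $cI_k = M_aM_b + M_bM_a$ for some $c \in \F_q^\times$; since $c^{-1}I_k$ is scalar and $S$ is symmetric, $M_c$ is symmetric, and together with the symmetry of $M_a, M_b$ this proves the first part.

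For the second part, the strategy is to adapt the proof of \Cref{image contains an SL2k}. Using the notation $V_i' := V_i(1)$, a direct commutator computation shows that $[V_b',V_a']$ corresponds, under the inverse of \cref{embedding Hc}, to $\left(\begin{smallmatrix} I_k & cI_k\\ 0 & I_k\end{smallmatrix}\right)$, while $V_c'$ corresponds to $\left(\begin{smallmatrix} I_k & 0\\ c^{-1}S & I_k\end{smallmatrix}\right)$. Taking integer powers modulo $p$ yields $\left(\begin{smallmatrix} I_k & I_k\\ 0 & I_k\end{smallmatrix}\right)$ and $\left(\begin{smallmatrix} I_k & 0\\ S & I_k\end{smallmatrix}\right)$ in the image; since $S$ is a Singer element of $\GL_k(\F_q)$, \Cref{work of Dickson} implies that these generate a copy $N \cong \SL_2(\F_{q^k})$ inside the image. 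Next, consider $W := [V_c',V_b',V_b',V_a',V_a']^{\ell}$, where $\ell$ is chosen so that $-4\ell \equiv 1 \pmod{p}$. The commutator calculation carried out in the proof of \Cref{image contains an SL2k} shows that $W$ corresponds under \cref{embedding Hc} to $\left(\begin{smallmatrix} I_k & Z\\ 0 & I_k\end{smallmatrix}\right)$ with $Z = M_aM_bM_cM_bM_a = c^{-1}M_aM_bSM_bM_a$. By \Cref{centraliser of Singer element}, $\langle S\rangle = C_{\GL_k(\F_q)}(S) = \F_q[S]\setminus\{0\}$, and since $c^{-1}I_k \in \langle S\rangle$, assumption~(ii) is equivalent to $Z \notin \F_q[S]$. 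The Sylow $p$-subgroup argument used in the proof of \Cref{CCKW prop 7.18}(ii) then applies verbatim to conclude that $W$ does not normalize $N$.

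To finish, apply \Cref{new refinement Li} to $\langle N, W\rangle$, viewed as a subgroup of $\SL_{2k}(\F_q)$ via the inverse of \cref{embedding Hc}. The non-normality of $N$ leaves only two options: either $\langle N, W\rangle = \SL_{2k}(\F_q)$, or $\langle N, W\rangle = \Sp_{2k}(\F_q,\Omega')$ for some symplectic form $\Omega'$. The first option is excluded by the first part: a short matrix calculation shows that the restriction of $\Omega_{2k}$ to the rows and columns selected by \cref{embedding Hc} is precisely $\Omega_k$, so $\langle N, W\rangle$ is contained in the Hc-embedded copy of $\Sp_{2k}(\F_q,\Omega_k)$. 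Since any two symplectic groups $\Sp_{2k}(\F_q,\Omega')$ and $\Sp_{2k}(\F_q,\Omega_k)$ inside $\GL_{2k}(\F_q)$ have the same order, the inclusion $\Sp_{2k}(\F_q,\Omega') \leqslant \Sp_{2k}(\F_q,\Omega_k)$ forces equality, yielding $\langle N, W\rangle = \Sp_{2k}(\F_q,\Omega_k)$ as required.

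The most delicate step is Part~1: one needs to pin down the specific $4k\times 4k$ realization of $\Omega_{2k}$ that is compatible with the block structure of the $V_i$, and then verify that its restriction to the Hc-embedded coordinates is exactly $\Omega_k$. Once this symplectic bookkeeping is carried out, the remainder follows \emph{mutatis mutandis} from the techniques of \Cref{image contains an SL2k}, augmented by the extra constraint coming from Part~1.
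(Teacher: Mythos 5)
Your overall approach matches the paper's (verify the generators lie in $\Sp_{4k}(\F_q,\Omega_{2k})$, then analyze the $2k\times 2k$ group sitting in blocks $2$ and $4$ via \Cref{CCKW prop 7.18} or its ingredients), but there is a genuine flaw in one step of Part~2.

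You claim that ``taking integer powers modulo $p$'' of the elements $\left(\begin{smallmatrix}I_k&cI_k\\0&I_k\end{smallmatrix}\right)$ and $\left(\begin{smallmatrix}I_k&0\\c^{-1}S&I_k\end{smallmatrix}\right)$ (where $cI_k=M_aM_b+M_bM_a$) produces $\left(\begin{smallmatrix}I_k&I_k\\0&I_k\end{smallmatrix}\right)$ and $\left(\begin{smallmatrix}I_k&0\\S&I_k\end{smallmatrix}\right)$. This is false in general: since $\left(\begin{smallmatrix}I&cI\\0&I\end{smallmatrix}\right)$ has order $p$, its integer powers are exactly $\left(\begin{smallmatrix}I&mcI\\0&I\end{smallmatrix}\right)$ for $m\in\{0,\dots,p-1\}$, and $mc=1$ is solvable for an integer $m$ only when $c$ lies in the prime field $\F_p$. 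Nothing in the hypotheses of the proposition forces $c\in\F_p$ (only in the later explicit construction does $c=-2$ happen to lie in $\F_p$). Your subsequent steps all depend on having the \emph{standard} copy $N=\langle\left(\begin{smallmatrix}I&I\\0&I\end{smallmatrix}\right),\left(\begin{smallmatrix}I&0\\S&I\end{smallmatrix}\right)\rangle$ inside the image, so this gap propagates.

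The claim itself happens to be true, just for a different reason: $\diag(cI_k,I_k)$ normalizes the standard copy $N$, because the upper and lower unipotent Sylow $p$-subgroups of $N$ are $\{\left(\begin{smallmatrix}I&T\\0&I\end{smallmatrix}\right):T\in\F_q[S]\}$ and $\{\left(\begin{smallmatrix}I&0\\T&I\end{smallmatrix}\right):T\in\F_q[S]\}$, and these are stable under conjugation by $\diag(cI,I)$ (as $c\F_q[S]=\F_q[S]$). Hence $\langle\left(\begin{smallmatrix}I&cI\\0&I\end{smallmatrix}\right),\left(\begin{smallmatrix}I&0\\c^{-1}S&I\end{smallmatrix}\right)\rangle=\diag(cI,I)\,N\,\diag(cI,I)^{-1}=N$, which is what you actually need. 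Alternatively (and this is the paper's route), avoid isolating the standard $\SL_2(\F_{q^k})$ entirely and apply \Cref{CCKW prop 7.18}(ii) directly with $M_1=cI_k$, $M_2=c^{-1}S$, $M_3=M_aM_bM_cM_bM_a$; then $M_1M_2=S$ is a Singer element automatically and the non-commutation condition reduces to $M_aM_bSM_bM_a\notin\langle S\rangle$ exactly as you compute. With either fix, the remainder of your argument (the Sylow criterion for non-normality, the containment in $\Sp_{2k}(\F_q,\Omega_k)$ by restricting $\Omega_{2k}$ to blocks $2$ and $4$, and the order argument forcing equality) is sound and fills in details the paper leaves implicit, notably the exclusion of $\SL_{2k}(\F_q)$ by the symplectic containment.
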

\begin{proof}
Since $M_a$, $M_b$ and $M_c$ are symmetric matrices by assumption, the associated elements $V_a'$, $V_b'$ and $V_c'$, as given by \cref{Ma Mb Mc}, are contained in $\Sp_{4k}(\F_q, \Omega_{2k})$. \par 
As in the proof of \Cref{image contains an SL2k}, we consider the elements 
\[ [V_a', V_b'] = \left( \begin{matrix}
1 & 0 & 0 & 0 \\
0 & 1 & 0 & -M_aM_b -M_bM_a\\
0 & 0 & 1 & 0 \\
0 & 0 & 0 & 1
\end{matrix}\right)
\]
and 
\[
[V_c', V_b', V_b', V_a', V_a'] = \left(\begin{matrix}
1 & 0 & 0 & 0 \\
0 & 1 & 0 & -4M_a M_b M_c M_b M_a \\
0 & 0 & 1 & 0 \\
0 & 0 & 0 & 1.
\end{matrix}\right).\]
The matrix $M_a M_b M_c M_b M_a$ is symmetric, and thus 
\[\left\langle 
\left(\begin{matrix}
1 & 0 \\
M_c & 1
\end{matrix}\right), \left(\begin{matrix}
1 & -M_aM_b -M_b M_a \\
0 & 1
\end{matrix}\right), \left(\begin{matrix}
1 & -4M_a M_b M_c M_b M_a\\ 0 & 1
\end{matrix}\right)
\right\rangle\leqslant \Sp_{2k}(\F_q, \Omega_k).\]
We embed the above subgroup in $\Ima(\Phi_{M_a,M_b,M_c})$ via \cref{embedding Hc}. In light of \Cref{CCKW prop 7.18}, the statement then follows if 
\[
(M_a M_b + M_b M_a) M_c (M_a M_b M_c M_b M_a) \neq (M_a M_b M_c M_b M_a) M_c (M_a M_b + M_b M_a).
\]
But since $M_a M_b + M_b M_a$ is a scalar matrix by assumption, the above inequality is equivalent to 
\[S M_a M_b S M_b M_a \neq M_a M_b S M_b M_a  S.\]
By \Cref{centraliser of Singer element}, the centraliser of $S$ in $\GL_k(\F_q)$ is given by $\langle S \rangle$. The statement now follows by condition (\ref{ABSBA}).
\end{proof}
We give explicit matrices satisfying the conditions from \Cref{image contains an Sp2k}.
\begin{proposition}
Let $p > 2$ and $k > 3$ be distinct primes. Let $q = p^r > 3$, for some $r \geq 1$. Define
\[
M_a := 
\left( \begin{array}{@{}c|c@{}}
   \begin{matrix}
      -1 & 0 \\
      \: \: 0 & 1
   \end{matrix} 
      & 0     \\
   \cmidrule[0.2pt]{1-2}
   0 & -I_{k-2} \\
\end{array} \right) \text{ and } M_b := \left( \begin{array}{@{}c|c@{}}
   \begin{matrix}
       \: \: \,1 & -1 \\
      -1 & -1
   \end{matrix} 
      & 0     \\
   \cmidrule[0.2pt]{1-2}
   0 & I_{k-2} \\
\end{array} \right).
\]
Then for any symmetric Singer element $S \in \GL_k(\F_q)$, the matrices $M_a$ and $M_b$ satisfy the conditions from \Cref{image contains an Sp2k}.
\end{proposition}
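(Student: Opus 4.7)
The plan is to verify conditions (i) and (ii) of \Cref{image contains an Sp2k} separately.

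For (i), both $M_a$ and $M_b$ are block-diagonal with a $2 \times 2$ top-left block and a $(k-2) \times (k-2)$ bottom-right block ($-I_{k-2}$ and $I_{k-2}$ respectively). A direct $2 \times 2$ multiplication yields $A B_0 + B_0 A = -2 I_2$ for the top-left blocks $A = \diag(-1,1)$ and $B_0 = \bigl(\begin{smallmatrix} 1 & -1 \\ -1 & -1 \end{smallmatrix}\bigr)$, while the bottom-right contribution is $-2 I_{k-2}$. Hence $M_a M_b + M_b M_a = -2 I_k$, which is a nonzero scalar matrix since $p > 2$.

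For (ii), set $T := M_a M_b$; as $M_a, M_b$ are symmetric, $T^t = M_b M_a$, so (i) reads $T + T^t = -2 I_k$. Equivalently $T = N - I_k$, where $N := T + I_k$ is antisymmetric and supported only in its top-left $2 \times 2$ block (namely $\bigl(\begin{smallmatrix} 0 & 1 \\ -1 & 0 \end{smallmatrix}\bigr)$), so in particular $\rank(N) = 2$. Let $S$ be any symmetric Singer element. Since $S$ acts irreducibly on $\F_q^k$ (\Cref{action is irreducible}), its minimal polynomial is irreducible of degree $k$ and $\F_q[S]$ is a field isomorphic to $\F_{q^k}$; combined with \Cref{centraliser of Singer element}, this yields $\F_q[S] \cap \GL_k(\F_q) = \langle S \rangle$. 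Since $T S T^t \in \GL_k(\F_q)$ automatically, the condition $T S T^t \in \langle S \rangle$ is equivalent to $T S T^t \in \F_q[S]$.

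Suppose toward contradiction that $TST^t \in \F_q[S]$. Expanding $TST^t = (N-I_k)S(-N-I_k) = S + [S,N] - NSN$, it follows that $[S,N] - NSN \in \F_q[S]$. Write $S = \bigl(\begin{smallmatrix} A & B \\ B^t & D \end{smallmatrix}\bigr)$ in the $2 + (k-2)$ block decomposition. A direct computation using that $N$ is zero outside its top-left $2 \times 2$ block shows that $[S,N] - NSN$ takes the block form $\bigl(\begin{smallmatrix} * & -JB \\ B^tJ & 0 \end{smallmatrix}\bigr)$ where $J = \bigl(\begin{smallmatrix} 0 & 1 \\ -1 & 0 \end{smallmatrix}\bigr)$. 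In particular, the bottom-right $(k-2) \times (k-2)$ block vanishes, so this matrix has rank at most $4$. Since $k > 3$ is prime, $k \geq 5 > 4$, so $[S,N] - NSN$ is non-invertible, hence zero (it lies in the field $\F_q[S]$). The off-diagonal block equation then gives $JB = 0$, i.e.\ $B = 0$, so $S$ is block-diagonal, contradicting \Cref{Singer not block diag}.

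The main obstacle is identifying the right algebraic reformulation in (ii), namely the rewriting $T = N - I_k$ with $N$ antisymmetric of rank $2$, which encodes condition (i) into a form where the rank/support analysis becomes transparent. Once that substitution is made, the remainder is a short block computation combined with the fact that $\F_q[S]$ is a field.
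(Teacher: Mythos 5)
Your proof is correct and follows essentially the same route as the paper's: verify condition (i) by direct block multiplication to get $M_aM_b + M_bM_a = -2I_k$, then for (ii) assume for contradiction that $M_aM_bSM_bM_a$ lies in $\langle S\rangle$ (equivalently in $\F_q[S]$), subtract $S$, observe that the bottom-right $(k-2)\times(k-2)$ block of the difference vanishes so the difference has rank at most $4 < k$ and is therefore the zero element of the field $\F_q[S]$, and conclude that the off-diagonal blocks of $S$ vanish, contradicting \Cref{Singer not block diag}. Your rewriting $T = N - I_k$ with $N$ antisymmetric of rank $2$ is a tidy algebraic repackaging of the same block computation that the paper carries out entry by entry, but the underlying argument is identical.
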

\begin{proof}
The products $M_a M_b$ and $M_b M_a$ are given by 
\[
M_a M_b = \left( \begin{array}{@{}c|c@{}}
   \begin{matrix}
      -1 & 1 \\
      -1 & -1
   \end{matrix} 
      & 0     \\
   \cmidrule[0.2pt]{1-2}
   0 & -I_{k-2} \\
\end{array} \right) , \quad  M_b M_a= \left( \begin{array}{@{}c|c@{}}
   \begin{matrix}
      -1 & -1 \\
      1 & -1
   \end{matrix} 
      & 0     \\
   \cmidrule[0.2pt]{1-2}
   0 & -I_{k-2} \\
\end{array} \right)
\]
and it follows that $M_a M_b + M_b M_a = -2 I_k$. Let $S = [S_{i,j}]_{1 \leq i,j \leq k} \in \GL_k(\F_q)$ be a symmetric Singer element. Then $M_a M_b S M_b M_a $ is given by
\[
\left( \begin{array}{@{}c|c@{}}
   \begin{matrix}
     S_{1,1} - 2 S_{1,2} + S_{2,2} & S_{1,1} - S_{2,2} \\
     S_{1,1} - S_{2,2} &  S_{1,1} + 2S_{1,2} + S_{2,2} 
   \end{matrix} 
      & \begin{matrix}
      S_{1,3} - S_{2,3} & \ldots & S_{1,k} - S_{2,k} \\
      S_{1,3}+ S_{2,3} & \ldots & S_{1,k} + S_{2,k}
		\end{matrix}           \\
   \cmidrule[0.2pt]{1-2}
		\begin{matrix}
			S_{3,1} - S_{3,2} & S_{3,1} + S_{3,2} \\
			\vdots & \vdots \\
			S_{k,1} - S_{k,2} & S_{k,1} + S_{k,2}
		\end{matrix}   
   & [S_{i,j}]_{3 \leq i,j \leq k} \\
\end{array} \right).
\]
Suppose this matrix commutes with $S$. Then in particular by \Cref{centraliser of Singer element} it belongs to $\langle S\rangle $, and hence the matrix $M_a M_b S M_b M_a - S$ belongs to $\F_q[S]$. However, 
\[
M_a M_b S M_b M_a - S = \left( \begin{array}{@{}c|c@{}}
   \begin{matrix}
      - 2 S_{1,2} + S_{2,2} & S_{1,1} - S_{2,2} - S_{1,2} \\
     S_{1,1} - S_{2,2} - S_{1,2} &  S_{1,1} + 2S_{1,2} 
   \end{matrix} 
      & \begin{matrix}
    - S_{2,3} & \ldots &  - S_{2,k} \\
      S_{1,3}& \ldots & S_{1,k} 
		\end{matrix}           \\
   \cmidrule[0.2pt]{1-2}
		\begin{matrix}
			- S_{3,2} & S_{3,1}  \\
			\vdots & \vdots \\
			 - S_{k,2} & S_{k,1} 
		\end{matrix}   
   & 0 \\
\end{array} \right),
\]
which is a singular matrix, since $k \geq 5$. Since the only singular matrix in $\F_q[S]$ is the zero matrix, we conclude that 
\[S = \left( \begin{array}{@{}c|c@{}}
   \begin{matrix}
     S_{1,1}  &  S_{1,2} \\
     S_{1,2}  &   S_{2,2} 
   \end{matrix} 
      & \begin{matrix}
     0 & \ldots &0 \\
     0 & \ldots & 0
		\end{matrix}           \\
   \cmidrule[0.2pt]{1-2}
		\begin{matrix}
			0 & 0 \\
			\vdots & \vdots \\
			0 & 0
		\end{matrix}   
   & [S_{i,j}]_{3 \leq i,j \leq k} \\
\end{array} \right).
\]
But since $S$ is a Singer element, this is a contradiction by \Cref{Singer not block diag}.
\end{proof}
We now show that matrices as in \Cref{image contains an Sp2k} produce quotients of $\mfU^+_{\HB22}(\F_q)$ isomorphic to $\Sp_{4k}(\F_q)$.
\begin{theorem}
\label{quotient of symplectic type}
For $q$, $k$, $M_a$, $M_b$ and $M_c$ as in \Cref{image contains an Sp2k}, the associated homomorphism
\[\Phi_{M_a, M_b,M_c} \colon  \mfU^+_{\HB22}(\F_q) \to \SL_{4k}(\F_q) \]
as defined in \Cref{group hom} has image $\Sp_{4k}(\F_q, \Omega_{2k})$.
\end{theorem}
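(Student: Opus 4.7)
The plan is to mirror the proof of Theorem~\ref{onto SL4}, while ensuring that at every step we remain inside $\Sp_{4k}(\F_q, \Omega_{2k})$. By Proposition~\ref{image contains an Sp2k}, the image of $\Phi_{M_a,M_b,M_c}$ is already contained in $\Sp_{4k}(\F_q, \Omega_{2k})$ and contains a copy $H_c \cong \Sp_{2k}(\F_q, \Omega_k)$ embedded via \eqref{embedding Hc}. The goal is therefore to exhibit enough elements in the image to force equality, which we achieve by producing all simple root subgroups of $\Sp_{4k}(\F_q, \Omega_{2k})$ viewed as a Chevalley group of type $C_{2k}$.

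First, reproduce the key commutator computations from the proof of Theorem~\ref{onto SL4}, applied to $V_{-\alpha_2}(X)$ with $X \in \Mat_{k \times k}(\F_q)$ \emph{symmetric} (so $V_{-\alpha_2}(X) \in H_c$). The identity
\[
[V_b', V_{-\alpha_2}(X), V_b'] = I_{4k} - 2 M_b X M_b E_{3,1}
\]
and its $V_a'$-analogue produce elements $I_{4k} + Y E_{3,1}$ and $I_{4k} + Y E_{1,3}$ with $Y$ symmetric, since $M_a, M_b$ are symmetric. These are precisely the long-root elements of $\Sp_{4k}(\F_q, \Omega_{2k})$ in the $(1,3)$-block, and together with $V_{\pm \alpha_2}(Y)$ (from $H_c$) and their mixed commutators they yield a second embedded copy of $\Sp_{2k}(\F_q, \Omega_k)$ in the $(1,3)$-positions.

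Next, using both embedded copies of $\Sp_{2k}$ together with conjugates of $V_a', V_b', V_c'$, produce the remaining simple root subgroups of $\Sp_{4k}$. Following the strategy used in the last part of the proof of Theorem~\ref{onto SL4}, conjugate $V_b'$ (resp.\ $V_a'$) by products of carefully chosen elements of the two embedded $\Sp_{2k}$ copies (playing the role of the two $\SL_{2k}$'s in the SL case), then take differences of the resulting elements to isolate single contributions of the form $2 E_{i,j}$; since $q$ is odd, these give the desired bridging root subgroups. Commutators of these with $H_c$ and its partner copy then yield the short root subgroups that mix the $(1,3)$- and $(2,4)$-blocks. Finally, a Chevalley generation argument shows that all simple root subgroups of $\Sp_{4k}(\F_q, \Omega_{2k})$ together generate the full group, giving $\Ima(\Phi_{M_a,M_b,M_c}) = \Sp_{4k}(\F_q, \Omega_{2k})$.

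The main obstacle is the symplectic constraint: in the SL case the bridging root subgroups were built by conjugating $V_b', V_a'$ by diagonal elements of $\SL_k$ and exploiting the freedom of $\Mat_{k \times k}(\F_q)$, whereas here the parameters must be symmetric, and we must verify that the restricted conjugates of elements in $H_c$ still exhaust every long root of type $C_{2k}$. Tracking symmetry through each successive conjugation --- and checking that each simple root subgroup of $\Sp_{4k}$ of type $C_{2k}$ genuinely appears in the resulting output --- is the delicate part of the argument.
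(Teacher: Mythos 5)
Your overall strategy is the same as the paper's: establish containment in $\Sp_{4k}(\F_q,\Omega_{2k})$, exhibit a second copy of $\Sp_{2k}(\F_q,\Omega_k)$ in the $(1,3)$-positions using the triple commutators $[V_b', V_{-\alpha_2}(X), V_b']$ and $[V_a', V_{-\alpha_2}(X), V_a']$ with $X$ symmetric, and then bridge the two copies via the missing simple root subgroup $\alpha_k$ to conclude by Chevalley generation. So far so good.

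However, your analysis of the ``main obstacle'' is misidentified, and the specific device you propose for the bridging step would run into trouble if carried out literally. The element you need is
\[
x_{-\alpha_k}(\lambda) = \left(\begin{matrix} 1 & 0 & 0 & 0 \\ \lambda E_{1,k} & 1 & 0 & 0 \\ 0 & 0 & 1 & -\lambda E_{k,1} \\ 0 & 0 & 0 & 1 \end{matrix}\right),
\]
which has two nonzero off-diagonal blocks, coupled by transposition. You cannot ``isolate a single contribution of the form $2E_{i,j}$'' as in the $\SL$ case (Theorem \ref{onto SL4}): there the analogous step killed the $(3,4)$-block to leave a lone elementary matrix in the $(2,1)$-block, but that destroys membership in $\Sp_{4k}(\F_q,\Omega_{2k})$. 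Relatedly, the bridging parameter $Y$ in an element $I_{4k} + Y E_{2,1} - Y^t E_{3,4}$ is \emph{not} constrained to be symmetric --- the symmetry constraint only applies to the $(1,3)$/$(3,1)$ and $(2,4)$/$(4,2)$ blocks of the two embedded $\Sp_{2k}$ copies --- so the worry you flag about ``restricted conjugates exhausting every long root'' is a red herring. The clean way to finish (and the one the paper uses) is to conjugate $V_b'$ by the Levi elements $\diag(X^{-1},1,X^t,1) \in \Sp_{4k}(\F_q,\Omega_{2k})$ of the second copy alone, yielding $I_{4k} + (M_b X)E_{2,1} - (M_b X)^t E_{3,4}$ for \emph{all} $X \in \GL_k(\F_q)$; a product of two such elements with $Y := M_b^{-1}(I_k + \lambda E_{1,k})$ and $Y' := -M_b^{-1}$ gives $x_{-\alpha_k}(\lambda)$ directly, with no factor of $2$ and no appeal to $q$ being odd. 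An analogous manipulation with $[V_a', V_{-\alpha_2}(Y)]$ produces $x_{\alpha_k}(\lambda)$. You should rework your bridging step along these lines rather than transplanting the $\SL$ trick.
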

\begin{proof} 
Let  $M_a$, $M_b$ and $M_c$ be as in \Cref{image contains an Sp2k}. Then $\Ima(\Phi_{M_a,M_b,M_c}) \leqslant \Sp_{4k}(\F_q, \Omega_{2k})$, and the statement implies that
\[
H_c := \langle V_{-\alpha_2}(X) , V_{\alpha_2}(Y) \mid X,Y \in \Mat_{k\times k}(\F_p), X^t = X, Y^t = Y \rangle \cong \Sp_{2k}(\F_q,\Omega_k)
\]
where the isomorphism is given by the inclusion as in \cref{embedding Hc}, and where
\[V_{-\alpha_2}(X) := \left(\begin{matrix}
1 & 0 & 0 & 0 \\ 
0 & 1 & 0 & 0 \\
0 & 0 & 1 & 0 \\
0 & X & 0 & 1
\end{matrix}\right), \qquad V_{\alpha_2}(Y) := \left(\begin{matrix}
1 & 0 & 0 & 0 \\ 
0 & 1 & 0 & Y \\
0 & 0 & 1 & 0 \\
0 & 0 & 0 & 1
\end{matrix}\right).\]
As in the proof of \Cref{onto SL4}, for any symmetric matrix $X \in \Mat_{k \times k}(\F_q)$, one has
\[
[V_b', V_{- \alpha_2}(X), V_b'] =  I_{4k} - 2 M_b X M_b E_{3,1} \in \Ima(\Phi_{M_a,M_b,M_c}).
\]
Similarly, for a symmetric matrix $X \in \Mat_{k \times k}(\F_q)$,
\[
[V_a', V_{-\alpha_2}(X), V_a'] = I_{4k} + 2 M_a X M_a E_{1,3} \in \Ima(\Phi_{M_a,M_b,M_c}).
\]
Setting respectively  $X := 2^{-1}M_a^{-1} Y M_a^{-1}$ and $X := -2^{-1} M_b^{-1} Y M_b^{-1}$ in the above equalities, for $Y \in \Mat_{k \times k}(\F_q)$ symmetric, we obtain that $\Ima(\Phi_{M_a,M_b,M_c})$ contains the elements
\[
\left(\begin{matrix}
1 & 0 & 0 & 0 \\
0 & 1 & 0 & 0 \\
Y & 0  & 1 & 0 \\
0 & 0 & 0 & 1
\end{matrix}\right) \quad \text{ and } \quad \left(\begin{matrix}
1 & 0 & Y & 0 \\
0 & 1 & 0 & 0 \\
0 & 0 & 1 & 0 \\
 0 & 0 & 0 & 1
\end{matrix}\right) \quad \text{ for all } Y \in \Mat_{k \times k}(\F_q) \text{ symmetric}.
\]
We find that  $\Ima(\Phi_{M_a,M_b,M_c})$ contains a second copy of $\Sp_{2k}(\F_q,\Omega_k)$, embedded as matrices of the form (\ref{second SL2}).

In particular, $\Ima(\Phi_{M_a,M_b,M_c})$ contains the generators (see \cref{subsec Kac-Moody grps})
\[x_{\alpha_i}(\lambda) = \left(\begin{matrix}
I_{2k} + \lambda E_{i,i+1} & 0 \\
0 & I_{2k} -\lambda E_{i+1,i}
\end{matrix} \right), \quad x_{-\alpha_i}(\lambda) = \left(\begin{matrix}
I_{2k} + \lambda E_{i+1,i} & 0 \\
0 & I_{2k} - \lambda E_{i,i+1}
\end{matrix} \right),
\]
for $\lambda \in \F_q$, and $i \in \{1, \ldots k-1\} \cup \{k+1, \ldots, 2k-1\}$, where each block is a $2k \times 2k$-matrix. Moreover, $\Ima(\Phi_{M_a,M_b,M_c})$ contains
\[
x_{\alpha_{2k}}(\lambda) = \left(\begin{matrix}
I_{2k} & \lambda E_{2k,2k} \\
0 & I_{2k} \\
\end{matrix}\right), \quad 
x_{-\alpha_{2k}}(\lambda) = 
\left(\begin{matrix}
I_{2k} & 0 \\
\lambda E_{2k,2k} & I_{2k}
\end{matrix}\right), \quad \forall \lambda \in \F_q.
\] 
It remains to show that $\Ima(\Phi_{M_a,M_b,M_c})$ contains the elements
\[
x_{\alpha_k}(\lambda) = \left(\begin{matrix}
I_k & \lambda E_{k,1} & 0 & 0 \\
0 & I_k & 0 & 0 \\
0 & 0 & I_k & 0 \\
0 & 0 & -\lambda E_{1,k} & I_k
\end{matrix}\right), \quad x_{-\alpha_k}(\lambda) = \left(\begin{matrix}
I_k & 0 & 0 & 0 \\
\lambda E_{1,k} & I_k & 0 & 0 \\
0 & 0 & I_k & -\lambda E_{k,1}\\
0 & 0 &  0 & I_k
\end{matrix}\right), \: \forall \lambda \in \F_q.
\]
Since $\Ima(\Phi_{M_a,M_b,M_c})$ contains a copy of $\Sp_{2k}(\F_q,\Omega_k)$ embedded as matrices of the form (\ref{second SL2}), it contains in particular the elements $X E_{1,1} + (X^{-1})^t E_{3,3}$, for all $X \in \GL_k(\F_q)$. Letting $X:= M_b^{-1} Y$ for some $Y \in \GL_k(\F_q)$, we obtain that $\Ima(\Phi_{M_a,M_b,M_c})$ contains
\[
\left(\begin{matrix}
X^{-1} & 0 & 0 & 0 \\
 0 & 1 & 0 & 0 \\
 0 & 0 & X^t & 0 \\
 0 & 0 & 0 & 1
\end{matrix}\right) \left(\begin{matrix}
1 & 0 & 0 & 0 \\
M_b & 1 & 0 & 0 \\
0 & 0 & 1 & -M_b \\
0 & 0 & 0 & 1
\end{matrix}\right)\left(\begin{matrix}
X & 0 & 0 & 0 \\
0 & 1 & 0 & 0 \\
0 & 0 & (X^{-1})^t & 0 \\
0 & 0 & 0 & 1
\end{matrix}\right) = \left(\begin{matrix}
1 & 0 & 0 & 0 \\
Y & 1 & 0 & 0 \\
0 & 0 & 1 & -Y^t \\
0 & 0 & 0 & 1
\end{matrix}\right),
\]
for all $Y \in \GL_k(\F_q)$. Letting $Y := I_k + \lambda E_{1,k}$ for $\lambda \in \F_q$, and $Y' := -I_k$, we obtain that 
\[
\left(\begin{matrix}
1 & 0 & 0 & 0 \\
Y & 1 & 0 & 0 \\
0 & 0 & 1 & -Y^t \\
0 & 0 & 0 & 1
\end{matrix}\right) \left(\begin{matrix}
1 & 0 & 0 & 0 \\
Y' & 1 & 0 & 0 \\
0 & 0 & 1 & -(Y')^t \\
0 & 0 & 0 & 1
\end{matrix}\right) = \left(\begin{matrix}
1 & 0 & 0 & 0 \\
\lambda E_{1,k} & 1 & 0 & 0 \\
0 & 0 & 1 & -\lambda E_{k,1}\\
0 & 0 & 0 & 1
\end{matrix}\right) \in \Ima(\Phi_{M_a,M_b,M_c}).
\]

Similarly to the proof of \Cref{onto SL4}, $\Ima(\Phi_{M_a,M_b,M_c})$ contains the elements 
\[
[V_a', V_{-\alpha_2}(Y)] = \left(\begin{matrix}
1 & M_a Y & 0 & 0 \\
0 & 1 & 0 & 0 \\
0 & 0 & 1 & 0 \\
0 & 0 & -Y M_a & 1
\end{matrix}\right), \quad \forall Y \in \Mat_{k \times k}(\F_q) \text{ symmetric}.
\]
Additionally, for any $X \in \GL_{k}(\F_q)$, the image of $\Phi_{M_a,M_b,M_c}$ contains
\[
\left(\begin{matrix}
X^{-1} & 0 & 0 & 0 \\
 0 & 1 & 0 & 0 \\
 0 & 0 & X^t & 0 \\
 0 & 0 & 0 & 1
\end{matrix}\right) [V_a', V_{-\alpha_2}(Y)]\left(\begin{matrix}
X & 0 & 0 & 0 \\
0 & 1 & 0 & 0 \\
0 & 0 & (X^{-1})^t & 0 \\
0 & 0 & 0 & 1
\end{matrix}\right) = \left(\begin{matrix}
1 & X M_a Y & 0 & 0 \\
0 & 1 & 0 & 0 \\
0 & 0 & 1 & 0 \\
0 & 0 & -Y M_a X^t & 1
\end{matrix}\right).
\]
Setting $Y := M_a^{-1}$ and $X := I_k + \lambda E_{k,1}$ with $\lambda \in \F_q$, and $X' := -I_k$, we obtain that $\Ima(\Phi_{M_a,M_b,M_c})$ contains
\[
\left(\begin{matrix}
1 & X & 0 & 0 \\
0 & 1 & 0 & 0 \\
0 & 0 & 1 & 0 \\
0 & 0 & -X^{t} & 1
\end{matrix}\right)\left(\begin{matrix}
1 & X' & 0 & 0 \\
0 & 1 & 0 & 0 \\
0 & 0 & 1 & 0 \\
0 & 0 & -(X')^{t} & 1
\end{matrix}\right) = \left(\begin{matrix}
1 & \lambda E_{k,1} & 0 & 0 \\
0 & 1 & 0 & 0 \\
0 & 0 & 1 & 0 \\
0 & 0 & -\lambda E_{1,k} & 1
\end{matrix}\right).
\]
Hence $\Ima(\Phi_{M_a',M_b',M_c'})$ contains all the generators of $\Sp_{4k}(\F_q, \Omega_{2k})$ associated to the simple roots and their opposites as given in \cref{subsec Kac-Moody grps}, and we conclude that \[\Ima(\Phi_{M_a,M_b,M_c}) = \Sp_{4k}(\F_q, \Omega_{2k}).\qedhere\]  
\end{proof}

\section{Spectral high-dimensional expanders from Lie type groups}
\label{sect:HDX}
In this section, we prove that our quotients lead to the construction of high-dimensional expanders. First, we introduce some terminology surrounding Kac-Moody-Steinberg groups.
\subsection{Kac-Moody-Steinberg groups}
\label{sect KMS groups}
Throughout this section, let $A = (A_{ij})_{i,j \in I}$ be a $2$-spherical GCM with associated root system $\Delta$. Let $\K$ be a field. We denote by $U^+$ the group $\mfU^+_A(\K)$, and $U_i := \mfU_{\alpha_i}(\K)$ for all $i \in I$. Additionally, we set for all $i \neq j \in I$, \[U_{i,j} := \left\langle \mfU_{\gamma}(\K) \mid \gamma \in \Delta \cap ({\N}{\alpha_i} + {\N}{\alpha_j})\right\rangle \leqslant U^+.\] 
Since $A$ is of $2$-spherical type, each $U_{i,j}$ is the unipotent radical of the standard Borel subgroup of a Chevalley group of type $A_1 \times A_1$, $A_2$, $B_2$ or $G_2$, depending on whether $A_{ij}A_{ji} = 0,1,2$ or $3$.
\begin{definition}
The \emph{Kac-Moody-Steinberg group} (or \emph{KMS-group}) $\KMS{A}(\K)$ of type $A$ over $\K$ is the direct limit of the system $\{U_i, U_{i,j} \mid i\neq j \in I\}$ with respect to the canonical inclusions $U_i \into U_{i,j}$ for all $i \neq j \in I$.
\end{definition}
For each $J \subseteq I$  spherical, we call the subgroup $U_J := \langle U_i \mid i \in J \rangle \leqslant \KMS{A}(\K)$ a \emph{local group}.

\begin{example}
Let $A$ be the GCM of the root system $\Delta$ of type $\HB22$, with simple roots $\{a,b,c\}$ (see \cref{HB22}). In other words, 
\begin{equation}
\label{GCM HB22}
A = \left(\begin{matrix}
2 & -1 & -2\\
-1 &  2 & -2 \\
-1 & -1 & 2
\end{matrix}\right).
\end{equation}
For $\alpha \in \Delta_+$, let $x_{\alpha}: (\K,+) \to \mfU_A^+(\K)$ be as in \cref{subsec Kac-Moody grps}. Then 
\[\KMS{\HB22}(\K) := \varinjlim_{U_i, U_j \hookrightarrow U_{i,j}}\{U_i,U_{i,j} \mid 1 \leq i < j \leq 3\},\]
where \[U_a := U_1 = \langle x_a(\lambda) \mid \lambda \in \K \rangle, \quad U_b := U_2 = \langle x_b(\lambda) \mid \lambda \in \K \rangle, \text{ and } U_c := U_3 = \langle x_c(\lambda) \mid \lambda \in \K \rangle\]
and the groups $U_{i,j}$ are as follows (cf. \Cref{unique expression U+}):
\[U_{a,b} := U_{1,2} = \left\{x_{a}(\lambda_1) x_{b}(\lambda_2)x_{a+b}(\lambda_3) \mid \lambda_1,\lambda_2,\lambda_3 \in \K\right\} \cong \mfU^+_{A_2}(\K) \leqslant \SL_3(\K), \vspace{-10pt}\]
\begin{align*}
U_{b,c} & := U_{2,3} = \{x_{b}(\lambda_1) x_{c}(\lambda_2)x_{b+c}(\lambda_3) x_{2b+c}(\lambda_4) \hspace{-8pt} & \mid & \:\lambda_1,\lambda_2,\lambda_3, \lambda_4 \in \K\} \cong \mfU^+_{B_2}(\K) \leqslant \Sp_4(\K), \\
U_{a,c} & := U_{1,3} = \{x_{a}(\lambda_1) x_{c}(\lambda_2)x_{a+c}(\lambda_3) x_{2a+c}(\lambda_4) \hspace{-8pt} & \mid &\: \lambda_1,\lambda_2,\lambda_3, \lambda_4 \in \K\} \cong \mfU^+_{B_2}(\K) \leqslant \Sp_4(\K).
\end{align*}

\end{example}
By \Cref{U+ generated by simple root subgroups}, $\mfU^+_A(\K)$ is generated by the subgroups $\mfU_{\alpha_i}(\K)$ for $i \in I$ when $\abs{\K} \geq 4$.
In particular, when $\abs{\K} \geq 4$, the canonical morphism
\begin{equation}
\Theta_{\K,A} \colon \KMS{A}(\K) \to \mfU_A^+(\K)
\end{equation}
where each $U_i$ with $i \in I$ is  mapped to $\mfU_{\alpha_i}(\K) \leqslant \mfU_A^+(\K)$, is surjective.

\subsection{High-dimensional expanders from \texorpdfstring{$\KMS{\HB22}(\F_q)$}{the KMS-group}}
In this section, we prove the second part of \Cref{group hom to SL4K}. In particular, our constructions satisfy the hypotheses of \cite[Theorem 4.3]{IngaPaper}, and thus yield infinite families of bounded degree spectral high-dimensional expanders. 
\begin{theorem}
\label{local injectivity property}
Let $\Phi_{M_a, M_b,M_c}$ be as in \Cref{group hom}, with $M_a, M_b$ and $M_c$ invertible. Then $\Phi_{M_a,M_b,M_c}$ is a group homomorphism which is injective on the rank $2$ subgroups of $\mfU^+_{\HB22}(\K)$. Write \[ \Psi := \Theta_{\K, \HB22} \circ \Phi_{M_a,M_b,M_c}.\] Then $\Psi$ satisfies the \emph{intersection property} \[\Psi(U_J \cap U_K) = \Psi(U_J) \cap \Psi(U_K)\]
for all spherical subsets $J,K \subseteq I$.
\end{theorem}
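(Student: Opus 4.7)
The first assertion—that $\Phi_{M_a,M_b,M_c}$ is injective on every rank $2$ subgroup $\mfU^+_{A_J}(\K)$ ($\abs{J}=2$) of $\mfU^+_{\HB22}(\K)$—is proved by a direct computation on normal forms. By \Cref{unique expression U+}, every element of $\mfU^+_{A_J}(\K)$ admits a unique ordered expression as a product of root-subgroup elements: three factors for $J=\{a,b\}$ (type $A_2$), and four for $J=\{a,c\}$ and $J=\{b,c\}$ (both type $B_2$). The plan is to evaluate $\Phi_{M_a,M_b,M_c}$ on such a product using the explicit matrices $V_a(\lambda), V_b(\lambda), V_c(\lambda)$ and to read each parameter $\lambda_i$ off a distinct $n \times n$ block of the image, of the form $\lambda_i W$ where $W$ is a product of $M_a, M_b, M_c$. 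Since $M_a, M_b, M_c$ are invertible, each such $W$ is invertible, so $\lambda_i$ is uniquely determined by the block $\lambda_i W$, giving the claimed injectivity.

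For the intersection property, write $\Psi \colon \KMS{\HB22}(\K) \to \SL_{4n}(\K)$ for the natural composition of $\Theta_{\K,\HB22}$ followed by $\Phi_{M_a,M_b,M_c}$. The plan is to transfer the statement first to $\mfU^+_A(\K)$ via $\Theta_{\K,\HB22}$, and then to $\SL_{4n}(\K)$ via $\Phi_{M_a,M_b,M_c}$, using two structural facts. First, the restriction $\Theta_{\K,\HB22}|_{U_L}$ is an isomorphism onto $\mfU^+_{A_L}(\K)$ for every spherical $L \subseteq I$, by the very construction of $\KMS{\HB22}(\K)$ as a direct limit of its local groups. Second, the standard identity $\mfU^+_{A_J}(\K) \cap \mfU^+_{A_K}(\K) = \mfU^+_{A_{J\cap K}}(\K)$ holds in $\mfU^+_A(\K)$, since inside the completion $\mfU^{\ma +}_A(\K) \subseteq \widehat{\mathcal{U}}^+_{\K}$ each side is characterised by the positive roots appearing in its support. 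Combining these yields $U_J \cap U_K = U_{J\cap K}$ in $\KMS{\HB22}(\K)$, and the intersection property reduces to
\[
\Phi_{M_a,M_b,M_c}\bigl(\mfU^+_{A_J}(\K)\bigr) \cap \Phi_{M_a,M_b,M_c}\bigl(\mfU^+_{A_K}(\K)\bigr) = \Phi_{M_a,M_b,M_c}\bigl(\mfU^+_{A_{J\cap K}}(\K)\bigr)
\]
for all spherical $J, K \subseteq \{a,b,c\}$.

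Since $\HB22$ is $2$-spherical but not $3$-spherical, one has $\abs{J \cup K} \leq 3$. When $\abs{J \cup K} \leq 2$, both $\mfU^+_{A_J}(\K)$ and $\mfU^+_{A_K}(\K)$ sit inside the single rank $2$ subgroup $\mfU^+_{A_{J \cup K}}(\K)$, on which $\Phi_{M_a,M_b,M_c}$ is injective by the first part, and the displayed equality is immediate. The main obstacle is the case $J \cup K = \{a,b,c\}$, which reduces (up to relabeling) to $(J,K) = (\{a\},\{b,c\})$ with $J \cap K = \emptyset$, and $(J,K) = (\{a,b\},\{b,c\})$ with $J \cap K = \{b\}$. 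In each of these the plan is to exploit the block structure of $\Phi_{M_a,M_b,M_c}$ directly: the simple root subgroups $\mfU_{\alpha_a}, \mfU_{\alpha_b}, \mfU_{\alpha_c}$ map into matrices with nontrivial $n \times n$ blocks in positions $\{(1,4),(2,3)\}$, $\{(2,1),(3,4)\}$, and $\{(4,2)\}$ respectively, and each higher positive real root contributes to a further distinct block, computable by iterated commutators as in the proof of \Cref{image contains an SL2k}. Tracking these block supports case by case shows that the positions shared by $\Phi_{M_a,M_b,M_c}(\mfU^+_{A_J}(\K))$ and $\Phi_{M_a,M_b,M_c}(\mfU^+_{A_K}(\K))$ are exactly those of roots with support in $J \cap K$; invertibility of $M_a, M_b, M_c$ then forces the coefficients on all other roots to vanish, so any element of the intersection lies in $\Phi_{M_a,M_b,M_c}(\mfU^+_{A_{J\cap K}}(\K))$, as required.
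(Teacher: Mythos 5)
Your proof is correct and follows essentially the same route as the paper: compute the explicit block matrices for $\Psi(U_{a,b})$, $\Psi(U_{a,c})$, $\Psi(U_{b,c})$ and deduce both the local injectivity and the intersection property from the fact that the positive roots of each rank-$2$ subsystem occupy distinct $n\times n$ blocks, with invertibility of $M_a, M_b, M_c$ forcing the parameters to vanish. Your intermediate step of first proving $U_J \cap U_K = U_{J\cap K}$ in $\KMS{\HB22}(\K)$ via the completion is correct but unnecessary — the paper's argument needs only the trivial containment $U_{J\cap K}\subseteq U_J\cap U_K$ together with the block computation showing $\Psi(U_J)\cap\Psi(U_K)\subseteq\Psi(U_{J\cap K})$ — and your phrasing that each $\lambda_i$ appears in a block of the exact form $\lambda_i W$ with $W$ invertible is slightly imprecise (e.g.\ the $(2,4)$ block of $\Psi(U_{a,b})$ mixes parameters), though the kernel-triviality argument you clearly have in mind goes through just as in the paper.
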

\begin{proof}
Let $M_a,M_b, M_c \in \GL_n(\K)$. Since $\Theta_{\K,\HB22}$ isomorphically maps each $U_{J}$ with $J \subseteq \{a,b,c\}$ and $\abs{J} = 2$ onto the rank $2$ subgroup $\mfU_{A_J}^+(\K)$, it suffices to show that $\Psi$ is injective on the rank $2$ local groups of $\KMS{\HB22}(\K)$.

The images of the rank $2$ local groups $U_{a,b}$, $U_{b,c}$ and $U_{a,c}$ under $\Psi$ are respectively given by $\langle V_a(\lambda), V_b(\lambda)\mid \lambda \in \K \rangle$, $\langle V_b(\lambda), V_c(\lambda) \mid \lambda \in \K \rangle $ and $\langle V_a(\lambda),V_c(\lambda) \mid \lambda \in \K\rangle$. By definition,
\[
U_{a,b} \cong \mfU^+_{A_2}(\K), 
\]
and each element $u \in U_{a,b}$  can be uniquely written as a product
\[u = x_{a}(\lambda_1) x_{b}(\lambda_2) x_{a+b} (\lambda_3), \quad \lambda_1, \lambda_2, \lambda_3 \in \K.\]
Using the relation \[ [x_{a}(\chi),x_{b}(\mu)]= x_{a+b}(\chi \mu), \quad \text{for all } \chi, \mu \in \K,\] (see \cref{commutator relations chevalley}) to calculate the image of $x_{a+b}(\lambda_3)$, it follows that the image of $U_{a,b}$ is given by 
\begin{align}
\Psi(U_{a,b})&  =\left\{\Psi\left(x_{a}(\lambda_1) x_{b}(\lambda_2) x_{a+b}(\lambda_3) \right)\mid \lambda_i \in \K\right\} 
\notag \\
 & = \left\{\left(\begin{matrix}
1 & 0 & 0 & \lambda_1 M_a \\
\lambda_2 M_b & 1 & \lambda_1 M_a & \lambda_3 M_a M_b + \lambda_1 \lambda_2 \lambda_3 M_b M_a \\
0 & 0 & 1 & -\lambda_2 M_b \\
0 & 0 & 0 & 1
\end{matrix}\right) \mid \lambda_i \in \K\right\}. \label{local grp 1}
\end{align}
Since $M_a$ and $M_b$ are invertible matrices and thus in particular $M_a$, $M_b$ and $M_a M_b$ are different from zero, it follows that 
\[\Psi(u) = 1 \iff \lambda_1 = \lambda_2 = \lambda_3 = 0 \iff u = 1.\]
Hence $\Psi$ is injective on $U_{a,b}$.

Similarly, $U_{a,c}$ is isomorphic to $\mfU^+_{B_2}(\K)$. Using the commutation relations given by \cref{commutator relations chevalley}, one has that
\[[ x_{c}(\lambda_1),x_{a}(\lambda_2)] =x_{a+c}(\lambda_1\lambda_2) x_{2a+c}(\lambda_1\lambda_2^2),\]
and 
\[ [x_a(\lambda_1),x_{a+c}(\lambda_2)] = x_{2a+c}( \lambda_1 \lambda_2),\]
for all $\lambda_1, \lambda_2 \in \K$. It is then a direct computation to conclude that
\begin{align}
\Psi &(U_{a,c}) = \left\{ \Psi\left(x_{a}(\lambda_1)x_{c}(\lambda_2)x_{a+c}(\lambda_3) x_{2a+c}(\lambda_4)\right)\mid \lambda_i \in \K \right\}
\notag \\
 = &\left\{ \left(\begin{matrix}
1 & (\lambda_1\lambda_2 + \lambda_3) M_aM_c & (\lambda_4-\lambda_1 \lambda_3) M_a M_c M_a & \lambda_1 M_a \\
0 & 1 & \lambda_1 M_a & 0 \\
0 & 0 & 1 & 0 \\
0 & \lambda_2 M_c & -\lambda_3 M_c M_a & 1
\end{matrix}\right) \mid \lambda_i \in \K \right\}. \label{local grp 2}
\end{align}
Since none of the entries $M_a$, $M_c$, $M_a M_c$, $M_a M_c M_a$ are zero, one has again that $\Phi(u) = 1 $ if and only if $\lambda_i = 0$ for all $i \in \{1,\ldots,4\}$. Hence $\Psi$ is injective on $U_{a,c}$.

A similar calculation now shows that $\Psi(U_{b,c})$ is of the form
\begin{align}
\Psi&(U_{b,c}) = \left\{\Psi\left(x_{b}(\lambda_1) x_{c}(\lambda_2) x_{b+c}(\lambda_3) x_{2b+c}(\lambda_4)\right)\mid \lambda_i \in \K \right\} \notag \\
= & \left\{
\left(\begin{matrix}
1 & 0 & 0 & 0 \\
\lambda_1 M_b & 1 & 0 & 0 \\
(\lambda_4 - \lambda_1 \lambda_3)M_b M_c M_b & (-\lambda_1 \lambda_2 + \lambda_3) M_b M_c & 1 & -\lambda_1 M_b \\
\lambda_3 M_c M_b & \lambda_2 M_c & 0 & 1
\end{matrix}\right)\mid \lambda_i \in \K \right\}.
\label{local grp 3}
\end{align}
Again thanks to the fact that $M_b$ and $M_c$ are invertible, $\Psi$ is injective on $U_{b,c}$, and we conclude that $\Psi$ is injective on the local groups.

The intersection property trivially follows from eqs. (\ref{local grp 1}), (\ref{local grp 2}) and (\ref{local grp 3}).
\end{proof}

The \emph{coset complex} $\CC(G,\mathcal{H})$ (see \cite[Definition 2.4]{IngaPaper}) associated to a group $G$ and a collection of subgroups $\mathcal{H}:=\{H_1, \ldots, H_n\}$, is the simplicial complex with vertex set $\bigsqcup_{i=1}^n G/H_i$, and with $(k-1)$-simplices the subsets
\[
\{g_{i_1} H_{i_1}, \ldots, g_{i_k}H_{i_k}\}, \text{ such that } \bigcap_{j = 1}^k g_{i_j}H_j \neq \emptyset.
\]

\begin{corollary}
\label{HDX1}
Let $p > 2$ and $k > 3$ be distinct primes. Let $q = p^r > 3$ with $r \geq 1$. Suppose $M_a,M_b,M_c \in \GL_k(\F_q)$ are as in \Cref{image contains an SL2k}. Then for the associated matrices $V_a',V_b',V_c'$ as defined in \cref{Ma Mb Mc}, defining 
\[H_T := \langle  \{V_a',V_b',V_c'\} \setminus \{T\} \rangle \leqslant \SL_{4k}(\F_q), \quad T \in \{V_a',V_b',V_c'\},\]
the coset complex 
\[\mathcal{X}_{q,k} := \CC \left(\SL_{4k}(\F_q), \{H_T\}_{T \in \{V_a',V_b',V_c'\}}\right)\]
is a $\frac{\sqrt{2q}+2}{q-2}$-spectral high-dimensional expander of dimension $2$. 
\end{corollary}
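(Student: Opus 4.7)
The strategy is to apply \cite[Theorem 4.3]{IngaPaper} to the composite homomorphism
\[\Psi := \Theta_{\F_q, \HB22} \circ \Phi_{M_a, M_b, M_c} \colon \KMS{\HB22}(\F_q) \to \SL_{4k}(\F_q).\]
That theorem provides sufficient conditions under which a finite quotient of a $2$-spherical KMS-group yields, via the associated coset complex, a bounded-degree spectral high-dimensional expander whose spectral expansion is controlled by the spectral gap of its links (rank $2$ spherical buildings over the ground field).

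First I would verify that $\Psi$ is a surjective homomorphism onto a finite group. Since $q > 3$ gives $q \geq 4$, \Cref{U+ generated by simple root subgroups} shows that $\Theta_{\F_q, \HB22}$ is surjective, while \Cref{onto SL4} shows that, under the hypothesis that $M_a,M_b,M_c$ are chosen as in \Cref{image contains an SL2k}, the map $\Phi_{M_a,M_b,M_c}$ is surjective onto $\SL_{4k}(\F_q)$. Next I would invoke \Cref{local injectivity property}: since $M_a, M_b, M_c \in \GL_k(\F_q)$ are invertible, $\Psi$ is injective on each of the three rank $2$ local subgroups $U_{a,b}, U_{a,c}, U_{b,c}$ of $\KMS{\HB22}(\F_q)$ and satisfies the intersection property $\Psi(U_J \cap U_K) = \Psi(U_J) \cap \Psi(U_K)$ for all spherical $J,K \subseteq \{a,b,c\}$. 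These are exactly the input conditions of \cite[Theorem 4.3]{IngaPaper}.

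Once the hypotheses are in place, the conclusion of \cite[Theorem 4.3]{IngaPaper} should give that $\mathcal{X}_{q,k}$ is a $2$-dimensional spectral HDX, with the subgroups $H_T = \langle \{V_a',V_b',V_c'\}\setminus\{T\}\rangle$ playing the role of the images under $\Psi$ of the three rank $2$ parabolic-type subgroups of $\KMS{\HB22}(\F_q)$. The spectral bound $\frac{\sqrt{2q}+2}{q-2}$ is then inherited from the Ramanujan-type expansion of the links, which by the injectivity statement in \Cref{local injectivity property} and the intersection property are isomorphic to the incidence graphs of generalized polygons of types $A_2$ and $B_2$ over $\F_q$.

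The main technical obstacle I anticipate is the identification of the subgroups $H_T$, generated only by the three marked elements $V_a',V_b',V_c'$, with the full images $\Psi(U_{a,b}), \Psi(U_{a,c}), \Psi(U_{b,c})$ required by \cite[Theorem 4.3]{IngaPaper}. Verifying this identification amounts to checking that, inside the rank $2$ Chevalley groups of types $A_2$ and $B_2$ obtained via \Cref{local injectivity property}, the two generators $V_i',V_j'$ together with their iterated commutators (governed by \cref{commutator relations chevalley}) recover every $V_i(\lambda)$ for $\lambda \in \F_q$. Once this matching is secured, \cite[Theorem 4.3]{IngaPaper} can be invoked to conclude the proof.
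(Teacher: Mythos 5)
Your proposal follows the paper's route exactly: compose $\Theta_{\F_q,\HB22}$ with $\Phi_{M_a,M_b,M_c}$ to obtain $\Psi$, cite \Cref{onto SL4} for surjectivity onto $\SL_{4k}(\F_q)$, cite \Cref{local injectivity property} for injectivity on the rank-$2$ local groups and for the intersection property, then invoke \cite[Theorem~4.3]{IngaPaper}. The paper's proof of \Cref{HDX1,HDX2} consists of precisely these steps and nothing more.

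The obstacle you flag at the end, however, is a genuine subtlety, and the resolution you sketch cannot succeed unless $q$ is prime. Since the off-diagonal block of each $V_i'$ is square-zero, $(V_i')^n = V_i(n)$ for every $n\in\Z$, so $\langle V_i'\rangle = \{V_i(\lambda) : \lambda \in \F_p\}$ has order exactly $p$, not $q$. Moreover, the commutator relations in \cref{commutator relations chevalley} only produce parameters that are integer (hence $\F_p$-) polynomials in the inputs, so every element of $\langle V_i', V_j'\rangle$ is a product of root-group elements with parameters lying in $\F_p$; in particular $H_T = \langle V_i', V_j'\rangle$ has order a power of $p$, whereas $\lvert\Psi(U_J)\rvert$ equals $q^3$ (for $J = \{a,b\}$) or $q^4$ (for $J = \{a,c\}$ or $\{b,c\}$). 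Thus $H_T = \Psi(U_J)$ only when $q = p$, and for $q = p^r$ with $r>1$ the identification you propose to ``secure'' by iterated commutators simply fails. The paper's own proof is equally terse on this point---it applies \cite[Theorem~4.3]{IngaPaper} to ``the coset complexes arising from $\Psi$,'' i.e.\ to the complexes built from the full local-group images $\Psi(U_J)$---so the correct reading of the corollary for non-prime $q$ is with $H_T := \Psi(U_{\{a,b,c\}\setminus\{i\}})$ in place of the two-generator subgroup. You have correctly located the one delicate step, but the fix you suggest is not available, and the discrepancy between the stated $H_T$ and the $\Psi(U_J)$ actually fed into \cite[Theorem~4.3]{IngaPaper} deserves to be made explicit.
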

\begin{corollary}
\label{HDX2}
Let $p > 2$ and $k > 3$ be distinct primes. Let $q = p^r > 3$ with $r \geq 1$. Suppose $M_a,M_b,M_c \in \GL_k(\F_q)$ are as in \Cref{image contains an Sp2k}.  Then for the associated matrices $V_a',V_b',V_c'$ as defined in \cref{Ma Mb Mc}, the coset complex
\[\mathcal{X}'_{q,k} := \CC\left(\Sp_{4k}(\F_q), \{H_T\}_{T \in \{V_a',V_b',V_c'\}}\right)\]
is a $\frac{\sqrt{2q}+2}{q-2}$-spectral high-dimensional expander of dimension $2$.
\end{corollary}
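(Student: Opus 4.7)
The plan is to verify that the construction meets the hypotheses of \cite[Theorem 4.3]{IngaPaper}, from which the stated spectral high-dimensional expansion follows at once; this mirrors the strategy for \Cref{HDX1}, with $\SL_{4k}(\F_q)$ replaced by $\Sp_{4k}(\F_q,\Omega_{2k})$ throughout.

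First, I would assemble the required surjective homomorphism from the KMS-group. By \Cref{quotient of symplectic type}, the matrices $M_a,M_b,M_c$ supplied by \Cref{image contains an Sp2k} give a surjection $\Phi_{M_a,M_b,M_c}\colon \mfU^+_{\HB22}(\F_q) \twoheadrightarrow \Sp_{4k}(\F_q,\Omega_{2k})$. Composing with the canonical projection $\Theta_{\F_q,\HB22}\colon \KMS{\HB22}(\F_q)\to \mfU^+_{\HB22}(\F_q)$, which is surjective because $q>3$ forces $\abs{\F_q}\geq 4$ (cf.\ \Cref{U+ generated by simple root subgroups}), I obtain a surjective group homomorphism
\[
\Psi := \Phi_{M_a,M_b,M_c}\circ \Theta_{\F_q,\HB22}\colon \KMS{\HB22}(\F_q)\twoheadrightarrow \Sp_{4k}(\F_q,\Omega_{2k}),
\]
under which the three simple-root subgroups $U_a,U_b,U_c$ of $\KMS{\HB22}(\F_q)$ are mapped to the one-parameter subgroups containing $V_a',V_b',V_c'$, respectively.

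Second, the matrices $M_a,M_b,M_c$ of \Cref{image contains an Sp2k} are all invertible (by construction $M_c=(M_aM_b+M_bM_a)^{-1}S$ with $S$ a Singer element, while $M_a,M_b$ are explicit invertible symmetric matrices). Thus \Cref{local injectivity property} applies to $\Psi$, yielding injectivity on each rank-$2$ local subgroup $U_{i,j}$ of $\KMS{\HB22}(\F_q)$ together with the intersection property
\[
\Psi(U_J\cap U_K)=\Psi(U_J)\cap \Psi(U_K) \qquad \text{for all spherical } J,K\subseteq \{a,b,c\}.
\]
Under $\Psi$, the rank-$2$ image $\Psi(U_{i,j})$ coincides with the subgroup $H_T$, where $T$ is the unique element of $\{V_a',V_b',V_c'\}$ distinct from $V_i',V_j'$; the explicit matrix descriptions of these images are exactly \eqref{local grp 1}, \eqref{local grp 2} and \eqref{local grp 3}.

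Finally, I would invoke \cite[Theorem 4.3]{IngaPaper}, whose hypotheses are precisely the existence of a surjective homomorphism from the KMS-group of type $\HB22$ over $\F_q$ onto the target group which is injective on rank-$2$ local subgroups and satisfies the intersection property. Its conclusion outputs directly that $\CC\bigl(\Sp_{4k}(\F_q),\{H_T\}_{T\in\{V_a',V_b',V_c'\}}\bigr)$ is a $\frac{\sqrt{2q}+2}{q-2}$-spectral high-dimensional expander of dimension $2$, as claimed. The main (and essentially only) obstacle is the verification of local injectivity together with the intersection property, already secured by \Cref{local injectivity property}; the spectral constant itself requires no further computation and is delivered by the cited theorem.
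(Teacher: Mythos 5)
Your proposal is correct and follows essentially the same route as the paper: compose the surjection $\Phi_{M_a,M_b,M_c}$ from \Cref{quotient of symplectic type} with the canonical map $\Theta_{\F_q,\HB22}$, verify local injectivity and the intersection property via \Cref{local injectivity property}, and apply \cite[Theorem 4.3]{IngaPaper}. (You even write the composition in the correct order, $\Phi_{M_a,M_b,M_c}\circ\Theta_{\F_q,\HB22}$, whereas the paper's statement of \Cref{local injectivity property} and its proof of \Cref{HDX1,HDX2} have the factors transposed, which does not type-check.)
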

In particular, setting $P_p := \{k \text{ prime} \mid p \neq k \text{ and } k > 3\}$, we obtain new infinite families $(\mathcal{X}_{q,k})_{k \in P_p}$ and $(\mathcal{X}_{q,k}')_{k \in P_p}$ of bounded degree, spectral high-dimensional expanders, for all $p > 2$ and all $q = p^r > 3$  (with $r \geq 1$).

\begin{proof}[Proof of \Cref{HDX1,HDX2}]
Let $\Phi_{M_a,M_b, M_c}$ be as in \Cref{onto SL4} or \Cref{quotient of symplectic type}. In particular $\K = \F_q$ has size at least $4$. Since $\HB22$ is $2$-spherical, it follows that the canonical map \[\Theta_{\K,\HB22} \colon \KMS{\HB22}(\K) \to \mfU_{\HB22}^+(\K)\] is surjective. Then the quotients of $\mfU^+_{\HB22}(\K)$ from \Cref{onto SL4,quotient of symplectic type} are quotients of $\KMS{\HB22}(\K)$ as well. It follows from \Cref{local injectivity property} that $\Psi := \Theta_{\K,\HB22} \circ \Phi_{M_a,M_b,M_c}$ is injective on the local groups and satisfies the intersection property. Then \cite[Theorem 4.3]{IngaPaper} implies that the coset complexes arising from $\Psi$ yield infinite families of bounded degree, spectral high-dimensional expanders.
\end{proof}

\bibliographystyle{alpha}
\bibliography{references}

\end{document}